\DeclareMathAlphabet{\mathpzc}{OT1}{pzc}{m}{it}
\newtheorem{theorem}{Theorem}[section]
\newtheorem*{thm*}{Theorem}
\newtheorem{lemma}[theorem]{Lemma}
\newtheorem{proposition}[theorem]{Proposition}
\newtheorem{corollary}[theorem]{Corollary}
\newtheorem{claim}[theorem]{Claim}
\theoremstyle{definition}
\newtheorem{definition}[theorem]{Definition}
\theoremstyle{remark}
\newtheorem{remark}{Remark}
\newtheorem{question}{Question}
\def\hook{\upharpoonright}
\def\forces{\Vdash}
\newfont{\ssi}{cmssi12 at 12pt}
\newenvironment{ea*}{\begin{eqnarray*}}{\end{eqnarray*}}
\renewcommand{\phi}{\varphi}
\def\<#1>{\langle#1\rangle}
\newcommand{\MA}{\ensuremath{\mathsf{MA}}}
\newcommand{\MP}{\ensuremath{\mathsf{MP}}}
\newcommand{\ColNothing}{\mathrm{Col}}
\newcommand{\Col}[1]{\ColNothing(#1)}
\newcommand{\MPColNothing}[1]{\MP_{\Col{\dot{\kappa}}}}
\newcommand{\GCH}{\ensuremath{\mathsf{GCH}}\xspace}
\newcommand{\CH}{\ensuremath{\mathsf{CH}}\xspace}
\def\hook{\upharpoonright}
\def\forces{\Vdash}
\def\MA{\mathsf{MA}}
\def\mfb{\mathfrak b}
\def\mfp{\mathfrak{p}}
\def\GCH {\mathsf{GCH}}
\def\CH {\mathsf{CH}}
\title{The Structure of $\kappa$-Maximal Cofinitary Groups}
\author[Fischer]{Vera Fischer}
\address[V. ~Fischer]{Institut f\"{u}r Mathematik, Kurt G\"odel Research Center, Universit\"{a}t Wien, Kolingasse 14-16, 1090 Wien, AUSTRIA}
\email{vera.fischer@univie.ac.at}
\author[Switzer]{Corey Bacal Switzer}
\address[C.~B.~Switzer]{Institut f\"{u}r Mathematik, Universit\"{a}t Wien, Kurt G\"odel Research Center, Kolingasse 14-16, 1090 Wien, AUSTRIA}
\email{corey.bacal.switzer@univie.ac.at}
\thanks{\emph{Acknowledgements:} The authors would like to thank the
Austrian Science Fund (FWF) for the generous support through grant number Y1012-N35.}
\subjclass[2000]{03E17, 03E35}
\date{}
\keywords{Cardinal characteristics; $\kappa$-cofinitary groups; higher Baire spaces; Bell's theorem}
\begin{document}
\maketitle

\begin{abstract}
We study $\kappa$-maximal cofinitary groups for $\kappa$ regular uncountable, $\kappa = \kappa^{<\kappa}$. Revisiting earlier work of Kastermans and building upon a recently obtained higher analogue of Bell's theorem, we show that:
\begin{enumerate}
\item Any $\kappa$-maximal cofinitary group has ${<}\kappa$ many orbits under the natural group action of $S(\kappa)$ on $\kappa$.
\item If $\mfp(\kappa) = 2^\kappa$ then any partition of $\kappa$ into less than $\kappa$ many sets can be realized as the orbits of a $\kappa$-maximal cofinitary group.
\item  For any regular $\lambda > \kappa$ it is consistent that there is a $\kappa$-maximal cofinitary group which is universal for groups of size ${<}2^\kappa = \lambda$. If we only require the group to be universal for groups of size $\kappa$ then this follows from $\mfp(\kappa) = 2^\kappa$.
\end{enumerate}
\end{abstract}

\section{Introduction}
Given a set $X$ we denote by $S(X)$ the group of permutations of $X$. The group $S(\omega)$ and its subgroups have been of interest in set theory of the reals, both for their combinatorial, as well as descriptive set theoretic properties (see for example \cite{MADfamandNeighbors, KastermansZhang2012, BorelMCG}). Of particular interest are the {\em maximal cofinitary} subgroups. Here, a permutation $f \in S(\omega)$ is {\em cofinitary} if it has only finitely many fixed points. A group $G \leq S(\omega)$ is {\em cofinitary} if all of its non-identity elements are cofinitary. The group $G$ is moreover said to be {\em maximal} if no proper supergroup is cofinitary.

An important area of set theory, which has been of increased interest in the past decades, is the study of analogues of  combinatorial sets of reals in the higher Baire spaces, $\kappa^\kappa$ and $2^\kappa$. In this paper, we study such higher  counterparts to {\em maximal cofinitary groups}. Throughout the paper, we fix a regular, uncountable cardinal $\kappa$ and assume that $\kappa^{{<}\kappa} = \kappa$. In analogy with the countable case, a permutation $f \in S(\kappa)$ is said to be $\kappa$-cofinitary if it has ${<}\kappa$ many fixed points. A subgroup $G \leq S(\kappa)$ is said to be  $\kappa$-cofinitary if all of its non-identity elements are $\kappa$-cofinitary and  moreover {\em maximal} it there are no proper $\kappa$-cofinitary supergroups.  There is a long list of literature regarding the set theoretic properties of maximal cofinitary groups on $\omega$ (both from purely combinatorial, as well as descriptive set theoretic point of view), while higher counterparts have already been studied in~\cite{MADfamandNeighbors, MCGrevisited}. For more recent studies of a close relative to the $\kappa$-maximal cofinitary groups, namely $\kappa$-maximal almost disjoint families see~\cite{NoKappaMAD, KappaDefinableMAD}.

A main tool in our analysis of the structure of $\kappa$-maximal cofinitary groups is the following recent higher counterpart of Bell's theorem (see~\cite[Theorem 4.3]{SchilhanPkappa}): {\em If $\mathbb{P}$ is a $\kappa$-specifically centered partial order, $\kappa^{<\kappa}=\kappa$ and $\{D_\alpha:\alpha<\lambda\}$ is a family of dense subsets of $\mathbb{P}$, where $\lambda^{<\kappa}=\lambda$ and $\lambda<\mathfrak{p}(\kappa)$, then there is a filter $G\subseteq \mathbb{P}$ such that $G\cap D_\alpha\neq\emptyset$ for all $\alpha$.} Here $\mathfrak{p}(\kappa)$ denotes the $\kappa$-pseudointersection number.

Given a group $G \leq S(\kappa)$ we look at the orbits arising from its action on $\kappa$: for every $g \in G$ and $\alpha \in \kappa$ the action is such that $(g, \alpha) \mapsto g(\alpha)$. In \cite[Theorem 9]{Isomofmcgs} Kastermans showed that if $G$ is a maximal cofinitary group (on $\omega$) then $G$ has only finitely many orbits. Below, we obtain the following:

\begin{thm*}[See Theorem \ref{orbits1}]
If $G \leq S(\kappa)$ is a $\kappa$-maximal-cofinitary group then it has less than $\kappa$-many orbits.
\label{mainthm1}
\end{thm*}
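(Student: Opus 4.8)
The plan is to prove the contrapositive: assuming $G\leq S(\kappa)$ is $\kappa$-cofinitary with at least $\kappa$ many orbits, I will produce $g\in S(\kappa)\setminus G$ with $\langle G,g\rangle$ still $\kappa$-cofinitary, contradicting maximality. Note first that the orbits of $G$ are pairwise disjoint nonempty subsets of $\kappa$, so ``at least $\kappa$ orbits'' means \emph{exactly} $\kappa$ orbits; fix an enumeration $\langle O_\xi:\xi<\kappa\rangle$. Two reductions are used repeatedly. (i) Every $h\in\langle G,g\rangle\setminus G$ is conjugate in $\langle G,g\rangle$ either to an element of $G$ (hence $\kappa$-cofinitary) or to the permutation given by a cyclically reduced word $w=a_0g^{k_1}a_1g^{k_2}\cdots g^{k_n}a_n$ with $a_i\in G$ and $k_i\in\integers\setminus\{0\}$; since conjugate permutations have equinumerous fixed-point sets, it suffices to make every such $w$ $\kappa$-cofinitary (and $g$ itself, but that will be automatic). (ii) Each element of $G$ preserves every $O_\xi$; so if $g$ is built to ``shift orbits'', i.e.\ $g[O_\xi]=O_{\sigma(\xi)}$ for a fixed permutation $\sigma$ of $\kappa$ all of whose cycles are infinite — such $\sigma$ exists since $\kappa$ is infinite, and one takes it to respect the cardinalities of the $O_\xi$ — then $w$ maps $O_\xi$ into $O_{\sigma^{k_1+\cdots+k_n}(\xi)}$. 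Hence $g$ is fixed-point-free and $g\notin G$ for free, and every word $w$ with $k_1+\cdots+k_n\neq0$ is fixed-point-free. Only the words with $\sum_ik_i=0$ remain; each of these preserves every $O_\xi$, and $w\restriction O_\xi$ depends only on $g\restriction O_\xi$ and on finitely many of the $a_i\restriction O_\xi$.

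To handle the words with $\sum_ik_i=0$ I would build $g$ by meeting dense sets in the natural cofinitary poset $\mathbb P$: conditions are pairs $(s,F)$ with $s$ an injective partial function $\kappa\to\kappa$ of size ${<}\kappa$ with $s[O_\xi\cap\dom s]\subseteq O_{\sigma(\xi)}$ for all $\xi$, and $F\in[G]^{<\kappa}$; $(t,F')\leq(s,F)$ iff $t\supseteq s$, $F'\supseteq F$, and no ``new'' fixed point over $F$ is created — for every cyclically reduced $w$ with $\sum_ik_i=0$ and letters among $F\cup\{g\}$ and every $x$ whose $w$-value becomes determined in passing from $s$ to $t$, one has $w^t(x)\neq x$. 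Conditions with a common first coordinate are ${<}\kappa$-directed, and there are only $\kappa^{<\kappa}=\kappa$ first coordinates, so $\mathbb P$ is $\kappa$-specifically centered. The dense sets are $D^{\dom}_\gamma=\{(s,F):\gamma\in\dom s\}$ and $D^{\ran}_\gamma=\{(s,F):\gamma\in\ran s\}$ for $\gamma<\kappa$, and $E_a=\{(s,F):a\in F\}$ for $a\in G$; this family has size ${<}\mathfrak p(\kappa)$ (using $\kappa<\mathfrak p(\kappa)$, which always holds, to absorb the first two groups of sets). The higher Bell theorem then yields a filter meeting all of them, and its union $g$ is a permutation of $\kappa$ shifting orbits along $\sigma$: so $g\notin G$, and any word $w$ over $G$ is either a $\sum k_i\neq0$ word (fixed-point-free) or a $\sum k_i=0$ word whose letters eventually all lie in a common $F$ from the filter, after which no fixed points of $w(g)$ can be added, so $\mathrm{Fix}(w(g))$ is contained in the ${<}\kappa$-sized domain of that condition. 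Hence $\langle G,g\rangle$ is a $\kappa$-cofinitary group properly extending $G$ — contradiction.

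The step I expect to be the crux is the \emph{extension lemma} underlying the density of the $D^{\dom}_\gamma$ and $D^{\ran}_\gamma$: from any $(s,F)$ one must be able to add a single new point to $\dom s$ (or $\ran s$). One does this by sending some fresh $\alpha\in O_\xi\setminus\dom s$ to a point $\beta\in O_{\sigma(\xi)}\setminus\ran s$ chosen outside the ${<}\kappa$ many ``dangerous'' values, namely those $\beta$ for which some word $w$ over $F$ and some point would be forced into a $w$-cycle closing on itself; the existence of a safe $\beta$ uses that each $a\in F$ is $\kappa$-cofinitary and that $|O_{\sigma(\xi)}|=|O_\xi|$ is large compared to $|\dom s|<\kappa$. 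The two places needing extra care, where the hypothesis of $\kappa$ many orbits is genuinely used, are: orbits whose cardinality occurs only finitely often among the $O_\xi$ (these cannot be threaded onto an infinite $\sigma$-cycle and need a separate, purely local, generic treatment inside the orbit), and the bookkeeping ensuring the total number of dense sets stays below $\mathfrak p(\kappa)$. Apart from this, the argument is the higher analogue of the standard ``generic extension of a cofinitary group'' technique, with $\mathfrak p(\kappa)$ playing the role Bell's theorem plays over $\omega$.
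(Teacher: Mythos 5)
There is a genuine gap, and it is fatal to the strategy rather than a fixable detail. Your plan is to obtain $g$ generically via the higher Bell theorem applied to the poset of pairs $(s,F)$ with $F\in[G]^{<\kappa}$, and the dense sets you must meet include, for every word $w$ over $G$ (equivalently, for every finite tuple of elements of $G$), the set of conditions whose side condition $F$ contains the letters of $w$; only after the filter meets such a set can you argue that $w(g)$ acquires no further fixed points. That is $|G|$ many dense sets, and your parenthetical justification (``using $\kappa<\mathfrak{p}(\kappa)$ \dots to absorb the first two groups of sets'') only absorbs the $2\kappa$ many domain/range sets $D^{\mathrm{dom}}_\gamma, D^{\mathrm{ran}}_\gamma$. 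But a $\kappa$-maximal cofinitary group always has size strictly greater than $\kappa$ --- indeed at least $\mathfrak{a}_g(\kappa)\ge\mathfrak{b}(\kappa)\ge\mathfrak{p}(\kappa)$, and possibly $2^\kappa$ --- so the family $\{E_a : a\in G\}$ has size at least $\mathfrak{p}(\kappa)$ and Bell's theorem does not apply. What your argument actually yields is that no $\kappa$-cofinitary group of size ${<}\mathfrak{p}(\kappa)$ is maximal (essentially $\mathfrak{p}(\kappa)\le\mathfrak{a}_g(\kappa)$, i.e.\ Lemma \ref{addanewelmt}); it says nothing about a group of size ${\ge}\mathfrak{p}(\kappa)$ with $\kappa$ many orbits, which is exactly the case Theorem \ref{orbits1} must handle. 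The fact that the theorem is an outright \ZFC\ result (given $\kappa^{<\kappa}=\kappa$), with no hypothesis on $\mathfrak{p}(\kappa)$, is a further sign that forcing-axiom machinery is the wrong tool here.

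The missing idea is a mechanism for bounding the fixed points of an arbitrary word $w(h)$ that does not require genericity over all of $G$. The paper builds $h$ by a direct recursion of length $\kappa$ arranged so that $h$ moves at most one point between any two orbits, whence the graph induced by $h$ on the set of orbits is acyclic; a fixed point of $w(h)$ then forces the associated walk through orbits to double back across a unique edge, which produces a fixed point of one of the group elements $g_j$ occurring in $w$, and distinct fixed points of $w(h)$ produce distinct fixed points of a single $g_j$. Since each $g_j$ has ${<}\kappa$ fixed points and $w$ has finitely many letters, $w(h)$ has ${<}\kappa$ fixed points --- with no appeal to $\mathfrak{p}(\kappa)$ at all. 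Your observation (ii) about shifting orbits along a $\sigma$ with infinite cycles disposes of the words with $\sum_i k_i\ne 0$ nicely, but the $\sum_i k_i=0$ words are where the real content lies, and your treatment of them reduces precisely to meeting $|G|$ many dense sets. Separately, your deferral of orbits whose cardinality occurs only finitely often to ``a separate, purely local, generic treatment'' is unresolved (and note the paper's $h$ does not map orbits onto orbits at all, which sidesteps this), but the dense-set count is the decisive obstruction.
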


Kastermans \cite[Theorem 10]{Isomofmcgs} also shows that in the $\omega$-case this is, at least consistently, the only restriction on the orbit structure of a maximal cofinitary group. Specifically he proves that under $\MA$, for any $n < \omega$ and $\{O_k \; | \; k < n\}$ is a partition of $\omega$ then there is a maximal cofinitary group $G$ whose orbits are exactly this partition. Building on the above mentioned generalization of Bell's theorem (see ~\cite{SchilhanPkappa}, as well as~\cite{TowersGapsUnctble}), we establish the following higher Baire spaces analogue: 

\begin{thm*}[See Theorem \ref{orbits2}]
If $\mathfrak{p}(\kappa) = 2^\kappa$ then given any $\lambda < \kappa$ and any partition $\{O_\alpha\; | \; \alpha < \lambda\}$ of $\kappa$ there is a $\kappa$-maximal cofinitary group $G$ so that the orbits of $G$ are exactly $\{O_\alpha \; | \; \alpha < \lambda\}$.
\label{mainthm2}
\end{thm*}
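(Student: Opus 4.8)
The plan is to build $G$ by a transfinite recursion of length $2^\kappa$, as the union of an increasing, continuous chain $\langle G_\xi : \xi\le 2^\kappa\rangle$ of \emph{block-preserving} $\kappa$-cofinitary groups --- groups $H$ such that $h[O_\alpha]=O_\alpha$ for all $h\in H$ and $\alpha<\lambda$ --- with $|G_\xi|^{<\kappa}<2^\kappa$ for $\xi<2^\kappa$ (maintained by standard bookkeeping, using $\mathfrak{p}(\kappa)=2^\kappa$). Since every element of a block-preserving group sends each $O_\alpha$ into itself, the orbits of $G$ automatically refine $\{O_\alpha:\alpha<\lambda\}$; so it suffices to additionally ensure that $G$ acts transitively on each $O_\alpha$ and that $G$ is maximal. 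Fix, with bookkeeping, enumerations of (i) all pairs $(\alpha,\beta)$ lying in a common block (there are $\le\kappa$ many, as $\kappa$ is regular and $\lambda<\kappa$), and (ii) all $h\in S(\kappa)$. At stage $\xi$ we handle the $\xi$-th task, setting $G_{\xi+1}=\langle G_\xi,(\text{new elements})\rangle$ and taking unions at limits.

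The engine is an \emph{extension lemma}, a routine adaptation of known constructions of ($\kappa$-)maximal cofinitary groups: \emph{if $\calQ$ is a partition of $\kappa$ into ${<}\kappa$ pieces, $H$ is a $\kappa$-cofinitary group with $|H|^{<\kappa}<\mathfrak{p}(\kappa)$, and $q$ is a $\calQ$-respecting partial injection of $\kappa$ with $|\mathrm{Fix}(q)|<\kappa$ and satisfying the evident cardinality compatibility so that $q$ extends to a $\calQ$-respecting permutation, then there is a $\calQ$-respecting permutation $a\supseteq q$ with $\langle H,a\rangle$ $\kappa$-cofinitary.} One proves this via the poset $\mathbb{P}$ of $\calQ$-respecting partial injections of size ${<}\kappa$ extending $q$ (with the usual side conditions from $H$): $\mathbb{P}$ is $\kappa$-specifically centered, and the ${<}\mathfrak{p}(\kappa)$ many dense sets that (a) make $a=\bigcup G$ total and onto, (b) for each nontrivial reduced word $w$ in $a$ and the generators of $H$ keep $|\mathrm{Fix}(w)|<\kappa$, and (c) handle finitely many ad hoc requirements, are all dense --- the higher Bell theorem then yields the filter. (A condition has size ${<}\kappa$, so it cannot cover a $\calQ$-piece in its range without already covering that piece in its domain, which is why the totality dense sets stay dense even when some pieces are small.)

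The transitivity task $(\alpha,\beta)$ with $\alpha,\beta\in O_\gamma$ is handled by doing nothing if $\alpha,\beta$ are already $G_\xi$-equivalent, and otherwise applying the extension lemma with $\calQ=\{O_\delta:\delta<\lambda\}$ and $q=\{(\alpha,\beta)\}$ to adjoin a block-preserving $a$ with $a(\alpha)=\beta$. For the maximality task $h=h_\xi$: if $\langle G_\xi,h\rangle$ is not $\kappa$-cofinitary, do nothing; if it is $\kappa$-cofinitary and $h$ is block-preserving, set $G_{\xi+1}=\langle G_\xi,h\rangle$; and if it is $\kappa$-cofinitary but $h$ is \emph{not} block-preserving we must ``kill'' $h$, i.e.\ adjoin a block-preserving $a$ with $\langle G_\xi,a\rangle$ still $\kappa$-cofinitary and such that some \emph{nontrivial} word in $a,h$ has $\kappa$ many fixed points. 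Let $Y:=\{\zeta : h(\zeta)\in O_{\mathrm{blk}(\zeta)}\}$. \textbf{If $|Y|=\kappa$:} then $(h\restriction Y)^{-1}$ is a block-preserving partial injection with ${<}\kappa$ fixed points (as $h$ is cofinitary) which, by block-preservation of $h$ on $Y$, extends to a block-preserving permutation; the extension lemma gives a block-preserving $a\supseteq(h\restriction Y)^{-1}$ with $\langle G_\xi,a\rangle$ $\kappa$-cofinitary, and then $ah$ fixes $Y$ pointwise and $ah\neq\mathrm{id}$ (as $a$ is block-preserving, $h$ is not). \textbf{If $|Y|<\kappa$:} then, $\kappa$ being regular and there being ${<}\kappa$ off-diagonal pairs of blocks, there are $\gamma_0\neq\delta_0$ with $Z:=\{\zeta\in O_{\gamma_0}:h(\zeta)\in O_{\delta_0}\}$ of size $\kappa$; put $Z^*:=h[Z]\subseteq O_{\delta_0}$. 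Apply the extension lemma to the refinement $\calQ$ of $\{O_\delta:\delta<\lambda\}$ obtained by splitting $O_{\gamma_0}$ into $Z,\,O_{\gamma_0}\setminus Z$ and $O_{\delta_0}$ into $Z^*,\,O_{\delta_0}\setminus Z^*$, prescribing $a\restriction Z$ to be a fixed-point-free permutation of $Z$ and $a\restriction Z^*:=(h\restriction Z)\circ(a\restriction Z)\circ(h\restriction Z)^{-1}$ (a fixed-point-free permutation of $Z^*$); a direct computation then shows $[a,h]=aha^{-1}h^{-1}$ fixes every point of $Z^*$, and an extra ad hoc requirement makes $[a,h]\neq\mathrm{id}$. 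In either case let $G_{\xi+1}$ be $G_\xi$ together with the new generator.

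Finally $G:=G_{2^\kappa}$ is a group (union of a chain), is $\kappa$-cofinitary (each finite word lies in some $G_\xi$), is block-preserving, and acts transitively on each $O_\alpha$ by the transitivity tasks --- so its orbits are exactly the $O_\alpha$. It is maximal: if $h\in S(\kappa)\setminus G$ then $h=h_\xi$ for some $\xi$, and at stage $\xi$ we arranged that $\langle G,h\rangle\supseteq\langle G_{\xi+1},h\rangle$ contains a nontrivial word with $\kappa$ many fixed points --- the only alternative being that $h$ was block-preserving with $\langle G_\xi,h\rangle$ cofinitary, whence $h\in G_{\xi+1}\subseteq G$, contradicting $h\notin G$. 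The hard part is the killing step: isolating the two cases, finding the right word in each, and --- the real technical content --- verifying that the extension lemma genuinely supports the $\kappa$-sized prescriptions, which forces passage to refined partitions and a check that the associated posets remain $\kappa$-specifically centered with the relevant dense sets dense. No idea beyond the forcing/higher-Bell machinery already used for Theorem~\ref{orbits1} is needed, but this is where the care lies.
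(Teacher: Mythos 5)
Your overall architecture matches the paper's: a length-$2^\kappa$ recursion building a continuous increasing chain of block-preserving $\kappa$-cofinitary groups, each new generator obtained generically from a poset of size-${<}\kappa$ partial injections via the higher Bell theorem, with a case split on whether the permutation $h$ to be killed maps a $\kappa$-sized set into its own blocks or between two distinct blocks. The gap is in your extension lemma and, downstream, in your $|Y|<\kappa$ case. As stated, the extension lemma is false for $\kappa$-sized $q$: if $q$ agrees with some $g\in H$ on a set of size $\kappa$ but $q\not\subseteq g$, then \emph{every} permutation $a\supseteq q$ makes $ag^{-1}$ a nonidentity element with $\kappa$ many fixed points, regardless of $\mathrm{Fix}(q)$. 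The correct hypothesis is the paper's notion of \emph{hitability} (see Lemma \ref{addahit}): one must require that $|q\setminus g|=\kappa$ for every $g$ in the group and that every word $w(q)$ with letters from the group is either the identity on its domain or has ${<}\kappa$ fixed points; showing that a single triple $(a,\alpha,q(\alpha))$ can then be added to a condition without creating new fixed points is the content of Lemma \ref{hittinglemma}, and is where the real work sits. In your $|Y|=\kappa$ case the needed hitability of $(h\restriction Y)^{-1}$ does follow from $\langle G_\xi,h\rangle$ being $\kappa$-cofinitary and $h\notin G_\xi$, so that case closes once the lemma is restated correctly.

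In the $|Y|<\kappa$ case an idea is genuinely missing. Your prescription $q=(a\restriction Z)\cup h\,(a\restriction Z)\,h^{-1}$ is a $\kappa$-sized partial function whose two halves are correlated across blocks by the permutation $h\notin G_\xi$, and nothing in your setup guarantees it is hitable with respect to $G_\xi$: whether words mixing $a$ with elements of $G_\xi$ acquire $\kappa$ many fixed points depends on how the chosen fixed-point-free permutation $a\restriction Z$ interacts with $G_\xi$ and with $h$, and a bad choice can ruin it. The paper does not prescribe the two halves simultaneously. It first builds $a\restriction O_{\delta_0}$ generically so that it maps $\kappa$ many points of $h[Z]$ back into $h[Z]$, then forms the composite partial injection $h^{-1}\circ(a\restriction O_{\delta_0})\circ(h\restriction Z)$ on $O_{\gamma_0}$ and invokes a dichotomy: either this composite is hitable on an unbounded set, in which case $a\restriction O_{\gamma_0}$ is built to agree with it on a $\kappa$-sized set (so $a^{-1}h^{-1}ah$ has $\kappa$ many fixed points), or it is not hitable, in which case the failure of hitability itself already produces a word in $h$ and $G_{\xi+1}$ with $\kappa$ many fixed points, so $h$ is killed anyway. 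This two-step generic construction together with the hitability dichotomy is the ingredient your plan lacks; without it the $|Y|<\kappa$ case does not close.
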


Finally we investigate the possible isomorphism types of $\kappa$-maximal cofinitary groups. Kastermans proved that under $\MA$ there is a maximal cofinitary group $G$ which is universal for groups of size less than continuum i.e. if $H$ is any group of size ${<}2^{\aleph_0}$ then $H$ embeds into $G$. Here we obtain the existence of 
an universal $\kappa$-maximal cofinitary group:

\begin{thm*}[See Theorem \ref{universaltheorem}]
Assume $\GCH$. For any regular $\delta > \kappa$ there is a forcing extension preserving cardinals and cofinalities in which $2^\kappa = \delta$ and there is a $\kappa$-maximal cofinitary group which is universal for groups of size ${<}\delta$.
\label{mainthm3}
\end{thm*}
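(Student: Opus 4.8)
The plan is to realize $G$ as the union of a generic family of $\kappa$-cofinitary permutations added by a forcing iteration over the $\GCH$ ground model $V$, in the spirit of Kastermans' construction under $\MA$ in \cite{Isomofmcgs} but with appeals to Martin's axiom replaced by genericity. Fix a regular $\delta>\kappa$ and perform a ${<}\kappa$-support iteration $\langle\P_\alpha,\dot\Q_\alpha:\alpha<\delta\rangle$ in which each $\dot\Q_\alpha$ is a name for a poset of the type used in \cite{MADfamandNeighbors, MCGrevisited} to adjoin a $\kappa$-cofinitary permutation (or a small family of them) by ${<}\kappa$-sized partial injections together with the usual ``promise'' side condition keeping every group word built so far $\kappa$-cofinitary. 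These iterands are ${<}\kappa$-closed, and a standard $\Delta$-system argument (using $\kappa^{{<}\kappa}=\kappa$ and $\GCH$) shows that the whole iteration is ${<}\kappa$-closed and $\kappa^+$-c.c.; hence it preserves all cardinals and cofinalities, adds no bounded subsets of $\kappa$, and forces $2^\kappa=\delta$. (One cannot in general run Kastermans' recursion directly in $\ZFC$ from $\mfp(\kappa)=2^\kappa$: the higher Bell theorem requires the relevant index sets $\lambda$ to satisfy $\lambda^{{<}\kappa}=\lambda$, which fails for many $\lambda<\delta$; forcing over a $\GCH$ model avoids this.) A bookkeeping function, defined by recursion along the iteration, hands stage $\alpha$ either a \emph{maximality task} --- a $\P_\alpha$-name $\dot g$ for a permutation of $\kappa$ --- or a \emph{universality task} --- a $\P_\alpha$-name for a group structure of size ${<}\delta$. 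Since the iteration is $\kappa^+$-c.c.\ and $\delta$ is regular with $\delta>\kappa$, every permutation of $\kappa$ and every group of size ${<}\delta$ in $V[G_\delta]$ lies in some $V[G_\alpha]$, $\alpha<\delta$, so the bookkeeping can be arranged to treat all of them.

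For a maximality task with name $\dot g$ we may assume $g$ is $\kappa$-cofinitary: otherwise $g=\id\in G$, or else $g\notin G$ and $\langle G,g\rangle$ already fails to be $\kappa$-cofinitary. So assume $\operatorname{Fix}(g)$ has size ${<}\kappa$, and let $\dot\Q_\alpha$ be the permutation forcing above, modified by the extra requirement that the generic permutation $\rho$ agree with $g^{-1}$ on a fixed $\kappa$-sized set $T\subseteq\kappa\setminus\operatorname{Fix}(g)$ whose complement also has size $\kappa$. Then in $V[G_\delta]$ we have $\rho\in G$, $\rho$ is $\kappa$-cofinitary, $\rho g$ fixes $g^{-1}[T]$ pointwise, and (by genericity of $\rho$ off $T$) $\rho g\neq\id$. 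If $g\in G$ then $\rho g\in G$ would be a non-identity element with $\kappa$ many fixed points, contradicting that $G$ is $\kappa$-cofinitary; hence $g\notin G$, and $\rho g\in\langle G,g\rangle$ witnesses that $\langle G,g\rangle$ is not $\kappa$-cofinitary. Thus $G$ is $\kappa$-maximal.

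For a universality task with a group $H$ of size ${<}\delta$ living in $V[G_\alpha]$, let $\dot\Q_\alpha$ adjoin a family $\langle\rho_h:h\in H\rangle$ of $\kappa$-cofinitary permutations subject to $\rho_e=\id$ and $\rho_g\rho_h=\rho_{gh}$ for all $g,h\in H$, with the promise side condition keeping every nontrivial reduced word in the $\rho_h$ and the previously added generators $\kappa$-cofinitary. Then $h\mapsto\rho_h$ is a group homomorphism $H\to G$, and it is injective because $\rho_h$ for $h\neq e$ is $\kappa$-cofinitary, hence $\neq\id$; so $H$ embeds into $G$. As every group of size ${<}\delta$ in $V[G_\delta]$ is treated at some stage, $G$ is universal for all such groups; in particular $|G|\geq\delta=2^\kappa$, so $|G|=\delta$.

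The main obstacle, and the place where the real work lies, is verifying that the permutation forcings and their modifications genuinely behave as claimed, i.e.\ that the required families of conditions are dense. One must show that any ${<}\kappa$-sized partial approximation can be extended so as (i) to make each generator a total permutation, (ii) to realize the prescribed agreement $\rho\restriction T=g^{-1}\restriction T$ of a maximality task, respectively the prescribed relations of $H$ in a universality task, and yet (iii) for each of the ${<}\kappa$-many words currently under consideration and each $\gamma<\kappa$, to force that word to have no fixed point above $\gamma$. The recurring point is that adjoining one new value to a partial injection excludes, for each of finitely many word-and-occurrence patterns, only ${<}\kappa$ many values, so a safe choice always remains; this is the argument of Zhang and Kastermans for $\omega$, carried out for $\kappa$ in \cite{MADfamandNeighbors, MCGrevisited}. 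The only genuinely new check is that the deliberately imposed $\kappa$-sized agreement in a maximality task does not propagate into a $\kappa$-sized coincidence for some nontrivial word with coefficients in the group built so far --- this is where the hypothesis $g\notin G_\alpha$ and the freedom in choosing $T$ are used, exactly as in the countable case --- and the same density and coherence combinatorics also underpins the $\kappa^+$-c.c.\ of the iteration. Granting these facts, genericity of $G_\delta$ delivers that $G$ is $\kappa$-cofinitary, $\kappa$-maximal, and universal for groups of size ${<}\delta$, which is the theorem.
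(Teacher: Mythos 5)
Your overall architecture coincides with the paper's: a ${<}\kappa$-support, length-$\delta$, $\kappa^+$-c.c.\ bookkeeping iteration of the Fischer-type cofinitary-group forcings over a \GCH ground model, with universality handled by iterands that adjoin a generic copy of a prescribed group $H$ subject to side conditions keeping all words $\kappa$-cofinitary. One mechanism differs: for maximality the paper does not hit each permutation individually, but simply interleaves at cofinally many stages the unmodified poset $\mathbb Q^\kappa_{A,\rho}$ with $|A|=\kappa^+$, which by \cite{MCGrevisited} already forces a $\kappa$-maximal cofinitary group, and then notes that ``either $f\in G_{\beta+1}$ or some word $w(f)$ fails to be $\kappa$-cofinitary'' persists to the final model. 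Your mechanism --- a tailored generator $\rho$ with $\rho\restriction T=g^{-1}\restriction T$ --- is the hitting argument of Section~3 (Lemma~\ref{addahit}) transplanted into the iteration, and is also workable, but your case analysis is off: the agreement may only be imposed when $g$ is \emph{hitable} with respect to the group $G_\alpha$ built so far, i.e.\ no word $w(g)$ with coefficients in $G_\alpha$ is a non-identity with $\kappa$ many fixed points. If for instance $g\in G_\alpha$, then $\rho g$ is simultaneously forced to have $\kappa$ many fixed points and required by the promises to be cofinitary, so the relevant sets are not dense; ``$g$ is $\kappa$-cofinitary'' and even ``$g\notin G_\alpha$'' are not sufficient hypotheses. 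You need the trichotomy ($g\in G_\alpha$, or $\langle G_\alpha,g\rangle$ not cofinitary, or $g$ hitable) and should act only in the third case; your second paragraph instead tries to \emph{derive} $g\notin G$ from a construction that is only well-defined once hitability is known.

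The one genuine gap is in the universality iterand. The density of the sets making each $\rho_h$ total \emph{while realizing the relations of $H$} is not contained in \cite{MADfamandNeighbors} or \cite{MCGrevisited}, which treat only free generators; it is exactly the new content of this paper (the poset $\mathbb Q^\kappa_{H,\rho}$, the operation of applying $A$-relations, and Lemma~\ref{replacementlemma}, which together yield Theorem~\ref{embeddingthm}). The issue is that a condition may implicitly determine the value of $\rho_h$ at $\alpha$ through a relation $hw\cong 1$ whose other letters are already partially defined, so one cannot freely choose a fresh value at $(h,\alpha)$; the paper's proof first closes the condition under $\{h\}$-relations, checks that this closure is an extension in the forcing and creates no new word evaluations except by legitimate substitution, and only then picks a large $\beta$. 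You correctly flag this verification as ``where the real work lies,'' but it is new work rather than a citation, so as written the universality half of the argument rests on an unproved (though true) density claim.
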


If we restrict the conclusion of this theorem to groups of size ${\leq} \kappa$ then the statement follows from $\mfp(\kappa) = 2^\kappa$. In particular this conclusion follows from $2^\kappa = \kappa^+$. 

We conclude the paper with interesting remaining open questions.

\section{$\kappa$-Maximal Cofinitary Groups have ${<}\kappa$ Many Orbits}

In this section we prove Theorem \ref{mainthm1}. For convienence it's restated below.
\begin{theorem}
If $G \leq S(\kappa)$ is a $\kappa$-maximal cofinitary group then it has less than $\kappa$ many orbits.
\label{orbits1}
\end{theorem}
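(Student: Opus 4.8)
The plan is to argue by contradiction, mimicking Kastermans' finite-orbit argument for $S(\omega)$ but replacing counting arguments with cardinality estimates that exploit $\kappa^{<\kappa}=\kappa$ and the maximality of $G$. So suppose $G \leq S(\kappa)$ is $\kappa$-maximal cofinitary but has at least $\kappa$ many orbits $\langle O_i \mid i < \kappa\rangle$ (if there are more than $\kappa$, shrink to $\kappa$ of them). The first step is to observe that each orbit $O_i$ has size ${<}\kappa$: the stabilizer of a point together with the orbit-stabilizer correspondence, plus the fact that any $g \neq \id$ in $G$ moves all but ${<}\kappa$ many points, shows that no single $g$ can have an orbit of size $\kappa$ on its own, and a more careful look — a nonidentity $g\in G$ acting on an orbit $O$ of size $\kappa$ would have to be cofinitary while being transitive-ish on a $\kappa$-sized set, which is fine, so actually the real input is: pick $\kappa$ many orbits, and from each orbit $O_i$ pick a representative; the set of representatives has size $\kappa$. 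The key point to extract is that we can choose the $O_i$ and suitable finite (or ${<}\kappa$-sized) tuples inside them so that a new permutation $f$ built to "connect" the orbits is forced to be cofinitary over $G$.

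The heart of the construction: I would build a permutation $f \in S(\kappa) \setminus G$ that sends (a cofinal-in-$\kappa$ selection of) one orbit into another, and then show $\langle G, f\rangle$ is still $\kappa$-cofinitary, contradicting maximality. Concretely, enumerate a set $\{\alpha_i : i<\kappa\}$ of orbit representatives, one from each of $\kappa$ distinct orbits, and define $f$ so that $f(\alpha_i) = \alpha_{i+1}$ (or more cleverly, pair up orbits and swap representatives while fixing everything else, so that $f$ is an involution with ${<}\kappa$ fixed points — an involution is easier to control). Then any element of $\langle G, f\rangle\setminus G$ is a reduced word $w = g_n f g_{n-1} f \cdots f g_0$ with $g_i \in G$, and we must show $w$ has ${<}\kappa$ fixed points. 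The crucial combinatorial fact is that because $f$ only interacts with the sparse set of representatives and each $g_i$ (being in $G$) has a well-understood orbit structure, a fixed point of $w$ forces a "return path" through the representative set; one uses that distinct orbits are disjoint and that the $g_i$ are cofinitary to bound the number of such return points by ${<}\kappa$. This is where $\kappa^{<\kappa}=\kappa$ is used: there are only $\kappa$ many reduced words of finite length, and for each we get a ${<}\kappa$ bound, but actually we need the bound word-by-word since we only need each $w$ itself to be $\kappa$-cofinitary.

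The main obstacle I anticipate is exactly this verification that the extended group is $\kappa$-cofinitary — i.e., that no nontrivial reduced word in $G \cup \{f\}$ acquires $\kappa$ many fixed points. In the $\omega$ case Kastermans can use the finiteness of the orbit picture and finite graph/path combinatorics; here one must instead show that the set of "bad" points where a word $w$ can cycle back is contained in a ${<}\kappa$-sized set, which requires choosing $f$ very carefully — probably defining $f$ not just on representatives but on a carefully engineered ${<}\kappa$-sized or cofinal scaffold so that the interaction of $f$ with each fixed $g\in G$ is controlled, while keeping $f\notin G$ (ensured because $f$ merges two $G$-orbits, and merging orbits is not something any element of $G$ does). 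A clean way to handle the word combinatorics is to note that a fixed point $\beta$ of $w = g_n f \cdots f g_0$ gives rise to a sequence $\beta = \beta_0, \beta_1, \dots$ alternately applying the $g_i$ and $f$; since each application of $f$ lands in the scaffold and each $g_i$ has ${<}\kappa$ fixed points, and since the orbits visited must close up into a loop, one localizes $\beta$ to lie in the $G$-orbit closure of the scaffold, which has size ${<}\kappa$. Once this is in place, maximality of $G$ is contradicted, completing the proof.
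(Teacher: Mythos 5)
Your overall strategy (build a new permutation $f\notin G$ connecting distinct orbits and show $\langle G,f\rangle$ is still $\kappa$-cofinitary, contradicting maximality) is the same as the paper's, but the construction you propose does not work, and the verification you wave at rests on a false claim. The fatal point is that your $f$ is supported on a sparse ``scaffold'' (representatives of the orbits, or a ${<}\kappa$-sized set) and is the identity elsewhere. In general the complement of such a scaffold has size $\kappa$ (e.g.\ if some orbit has size $\kappa$, which certainly can happen --- your opening suggestion that every orbit has size ${<}\kappa$ is simply false, and you abandon it mid-sentence), so $f$ itself already has $\kappa$ many fixed points and $\langle G,f\rangle$ is not $\kappa$-cofinitary; no further word combinatorics can rescue this. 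Even ignoring $f$ itself, words such as $g f g^{-1} f^{-1}$ fix every $\beta$ for which $\beta$, $g^{-1}(\beta)$ lie off the scaffold, giving non-identity elements with $\kappa$ many fixed points. Relatedly, your localization step asserts that fixed points of a word lie in ``the $G$-orbit closure of the scaffold, which has size ${<}\kappa$''; that closure is a union of $G$-orbits meeting the scaffold and can easily have size $\kappa$, so the bound does not follow.

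The paper avoids both problems by constructing $h$ as a \emph{total} permutation of $\kappa$ with no fixed points, by a length-$\kappa$ recursion: at each step the least point not yet covered is matched with the minimum of an orbit entirely disjoint from everything used so far (this is exactly where having $\kappa$ many pairwise disjoint orbits is used, since the current domain and range have size ${<}\kappa$). This design yields the two structural facts your sketch lacks and cannot reproduce: (i) for any two orbits, $h$ contains at most one pair connecting them, and (ii) the graph on orbits induced by $h$ is acyclic. Given these, a fixed point of a non-identity word $w(h)=g_0h^{k_0}\cdots h^{k_{l-1}}g_l$ produces, via the walk of its evaluation sequence through the orbit graph and a farthest-vertex/backtracking argument, a fixed point of one of the group elements $g_j$ occurring in $w$; distinct fixed points of $w(h)$ give distinct fixed points of some fixed $g_j$, so $\kappa$ many fixed points of $w(h)$ would force $\kappa$ many fixed points of a nonidentity element of $G$, a contradiction. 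Your ``return path through the representative set'' intuition is pointing at this mechanism, but without the one-pair-per-edge and acyclicity properties (which come from the specific recursive construction, not from merely choosing representatives) the argument has no way to convert fixed points of $w(h)$ into fixed points of elements of $G$, and the proof does not go through.
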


Note that the proof of this theorem follows along almost the exact same lines as Kastermans' proof, \cite[Theorem 9]{Isomofmcgs} of the $\omega$ case.

\begin{proof}
Assume $G \leq S(\kappa)$ is a $\kappa$-cofinitary group with $\kappa$ many orbits. Enumerate them without repetition as $\{O_\alpha \; | \; \alpha < \kappa\}$. We will show that $G$ is not maximal. In fact, we will produce a new permutation $h \notin G$ so that $\langle G, h\rangle := G * \langle h \rangle$ is $\kappa$-cofinitary. 

The construction of this $h$ is done in stages. We will recursively define ${<}\kappa$-length partial injections $h_\alpha : \kappa \to \kappa$ so that for all $\alpha$ we have $h_\alpha \subseteq h_{\alpha+1}$ and for limit ordinals $\lambda$ we have $h_\lambda = \bigcup_{\xi < \lambda} h_\xi$. The desired $h$ will be $\bigcup_{\alpha < \kappa} h_\alpha$. Let $h_0 = \emptyset$. Since we take unions at limit stages, we just need to define the successor stage. Assume $h_\alpha$ has been defined. Let $\xi = {\rm min} (\kappa \setminus {\rm dom }(h_\alpha) \cup \kappa \setminus {\rm range}(h_\alpha))$, let $\eta$ be least so that $({\rm dom }(h_\alpha) \cup {\rm range}(h_\alpha) \cup \{\xi\}) \cap O_\eta = \emptyset$ and let $\zeta = {\rm min}\,  O_\eta$. If $\xi \notin {\rm dom} (h_\alpha)$ let $h_{\alpha+1} = h_\alpha \cup \{(\xi, \zeta)\}$ and otherwise let $h_{\alpha+1} = h_\alpha \cup \{(\zeta, \xi)\}$. Let $h = \bigcup_{\alpha <\kappa} h_\alpha$. Clearly $h \in S(\kappa)$. Moreover, $h \notin G$ since, at each stage we moved something from one orbit to another since $\xi \notin O_\eta$ (for the same reason $h$ actually has no fixed points). It remains to show that $\langle G, h\rangle$ is a $\kappa$-cofinitary group. The following observation is important.

\begin{claim}
For each $\alpha, \beta < \kappa$ there is at most one pair $(\xi, \zeta) \in \kappa^2$ so that $\xi \in O_\alpha$, $\zeta \in O_\beta$ and $h(\xi) = \zeta$ or $h(\zeta) = \xi$. In other words, $h$ sends at most one element from $O_\alpha$ to an element of $O_\beta$ or vice versa for each pair of $\alpha$ and $\beta$.
\end{claim}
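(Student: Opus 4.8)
The plan is to follow the recursion defining $h$ and to observe that each pair added to $h$ permanently ``spends'' the two orbits it joins. First I would record the relevant features of the construction. At each successor step $\alpha+1$ exactly one new pair is added to $h_\alpha$, and limit steps add nothing new, so the pairs of $h$ are introduced one at a time. The two endpoints of the pair added at stage $\alpha+1$ are the point $\xi$ (the least point absent from ${\rm dom}(h_\alpha)$ or from ${\rm range}(h_\alpha)$) and the point $\zeta={\rm min}\,O_\eta$, where $O_\eta$ is the least orbit with $({\rm dom}(h_\alpha)\cup{\rm range}(h_\alpha)\cup\{\xi\})\cap O_\eta=\emptyset$. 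Since $\xi\notin O_\eta$, the endpoints $\xi$ and $\zeta$ always lie in two distinct orbits; in particular no pair of $h$ joins an orbit to itself, so the case $\alpha=\beta$ of the Claim is vacuous.

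The key point is a monotonicity observation: if at stage $s+1$ a pair is added with an endpoint in an orbit $O$, then $O\cap({\rm dom}(h_t)\cup{\rm range}(h_t))\ne\emptyset$ for every $t\ge s+1$, since ${\rm dom}(h_t)\cup{\rm range}(h_t)$ only grows along the recursion. Hence $O$ can never again be selected as the fresh orbit $O_\eta$ at a stage later than $s$. Now suppose toward a contradiction that $h$ joins $O_\alpha$ and $O_\beta$ by two distinct pairs, added at stages $s+1<t+1$. The pair added at stage $s+1$ has one endpoint in $O_\alpha$ and one in $O_\beta$, so by the observation neither $O_\alpha$ nor $O_\beta$ can be the fresh orbit at stage $t$. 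But the pair added at stage $t+1$ has an endpoint equal to ${\rm min}\,O_\eta$ for $O_\eta$ the fresh orbit at stage $t$, hence has an endpoint outside $O_\alpha\cup O_\beta$, and therefore cannot join $O_\alpha$ and $O_\beta$ --- a contradiction. So $h$ contains at most one pair joining $O_\alpha$ to $O_\beta$.

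To finish I would unwind the formulation of the Claim: for $\alpha\ne\beta$ a pair $(\xi,\zeta)\in\kappa^2$ with $\xi\in O_\alpha$, $\zeta\in O_\beta$ and $h(\xi)=\zeta$ or $h(\zeta)=\xi$ is precisely a pair of $h$ joining $O_\alpha$ to $O_\beta$, listed with its $O_\alpha$-endpoint first, of which there is at most one; and for $\alpha=\beta$ there are none. I do not anticipate any serious obstacle: the whole argument is bookkeeping on the recursion, with the only mild care needed in the orientation convention of the last step and in the degenerate case $\alpha=\beta$. The one genuine idea is the monotonicity observation --- a fresh orbit, once touched, stays touched, and so is used exactly once.
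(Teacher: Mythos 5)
Your proof is correct and follows essentially the same route as the paper's: once a pair joining $O_\alpha$ and $O_\beta$ appears, both orbits meet ${\rm dom}(h_\gamma)\cup{\rm range}(h_\gamma)$ at all later stages, so neither can again serve as the fresh orbit $O_\eta$, and hence no further pair between them is ever added. Your explicit monotonicity observation and the remark that the $\alpha=\beta$ case is vacuous are just slightly more detailed bookkeeping of the same argument.
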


\begin{proof}[Proof of Claim]
Fix $\alpha, \beta < \kappa$ and suppose $h_\delta(\xi) = \zeta \in O_\beta$ with $\xi \in O_\alpha$ for some $\delta < \kappa$. Then, for all $\gamma \geq \delta$ it's not the case that ${\rm dom }(h_\gamma) \cup {\rm range}(h_\gamma) \cap O_\alpha = \emptyset$ and the same for $O_\beta$ so by the construction of $h$, no further pairs from $O_\alpha \times O_\beta$ or $O_\beta \times O_\alpha$ will be added to $h_\gamma$.
\end{proof}

Let $W_G (x)$ consist of the set of words or the form $w(x) = g_0 x^{k_0}g_1 x^{k_1}g_2...x^{k_{l-1}}g_l$ for $l < \omega$, $g_0, ..., g_l \in G$, $x$ a fresh variable symbol and $k_0, ..., k_{l-1} \in \mathbb Z$. Clearly every element of $\langle G, h\rangle$ can be represented as a word in $W_G(h)$. We want to show that for each $w(x) \in W_G(x)$ the word $w(h)$ is either the identity or has ${<}\kappa$ many fixed points. The key step in this is the introduction of the following tree.

\begin{definition}
The $G$-{\em orbits tree} of $h$ is the graph\footnote{In order to show the comparison between the current proof and Kastermans \cite[Definition 14]{Isomofmcgs} we keep his terminology of a $G$-orbits {\em tree}. However, in the our case, unlike in the case of $\omega$, this graph is {\em not} necessarily a tree, namely it's not necessarily connected. For example if, for all $n < \omega$ we let $O_{n} = \{n\}$ then we will have that $h(n) = n+1$ for all even finite $n$ and $h(n) = n-1$ for all odd finite $n$ and these orbits will form a connected component unto themselves (obviously this situation can't happen in the $\omega$ case). However the fact that the $G$-orbits tree is a tree and not just an acyclic graph is not used in Kastermans' proof and won't be needed in ours either.} consisting of vertex set $\{O_\alpha \; | \; \alpha < \kappa\}$ and for $\alpha \neq \beta < \kappa$ we put an edge from $O_\alpha$ to $O_\beta$ if there is an $\xi \in O_\alpha$ so that $h(\xi) \in O_\beta$ or vice versa.
\end{definition}

\begin{claim}
The $G$-orbits tree of $h$ is acyclic.
\end{claim}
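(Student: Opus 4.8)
The plan is to prove, by induction on $\alpha < \kappa$, the stronger statement that the auxiliary graph $T_\alpha$ --- with the same vertex set $\{O_\gamma \mid \gamma < \kappa\}$ and an edge between $O_\gamma$ and $O_\delta$ (for $\gamma \neq \delta$) whenever some $\xi \in O_\gamma$ has $h_\alpha(\xi) \in O_\delta$ or some $\xi \in O_\delta$ has $h_\alpha(\xi) \in O_\gamma$ --- is a forest. Since $h_\alpha \subseteq h_\beta$ for $\alpha \leq \beta$, the edge sets of the $T_\alpha$ increase with $\alpha$, and since each pair in $h$ already belongs to some $h_\alpha$, the edge set of the $G$-orbits tree of $h$ is exactly the union of the edge sets of the $T_\alpha$. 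A cycle in the $G$-orbits tree uses only finitely many edges, hence would lie entirely inside some $T_\alpha$; so once the induction is complete, acyclicity of the $G$-orbits tree follows at once. The base case is trivial since $h_0 = \emptyset$ has no edges, and the limit stages are handled by the very same remark: a cycle in $T_\lambda = \bigcup_{\xi<\lambda} T_\xi$ is finite, hence already lies in some $T_\xi$, contradicting the inductive hypothesis.

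For the successor step, assume $T_\alpha$ is a forest and recall how $h_{\alpha+1}$ is formed: one picks $\xi$, then the least $\eta$ with $(\dom(h_\alpha) \cup {\rm range}(h_\alpha) \cup \{\xi\}) \cap O_\eta = \emptyset$, then $\zeta = \min O_\eta$, and sets $h_{\alpha+1} = h_\alpha \cup \{(\xi,\zeta)\}$ or $h_{\alpha+1} = h_\alpha \cup \{(\zeta,\xi)\}$. Let $O_\iota$ be the orbit containing $\xi$; by the choice of $\eta$ we have $\xi \notin O_\eta$, so $\iota \neq \eta$, and therefore the only edge that can be present in $T_{\alpha+1}$ but not in $T_\alpha$ is $\{O_\iota, O_\eta\}$, since this is the only new pair added to the partial injection.

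The crucial observation --- and essentially the only content of the argument --- is that $O_\eta$ is an isolated vertex of $T_\alpha$. Indeed, the choice of $\eta$ gives $O_\eta \cap (\dom(h_\alpha) \cup {\rm range}(h_\alpha)) = \emptyset$; but any edge of $T_\alpha$ incident to $O_\eta$ would be witnessed by some $\nu$ with $\nu \in O_\eta \cap \dom(h_\alpha)$ or with $h_\alpha(\nu) \in O_\eta \cap {\rm range}(h_\alpha)$, contradicting this disjointness. In particular $\{O_\iota,O_\eta\}$ is genuinely a new edge, so $T_{\alpha+1}$ is obtained from the forest $T_\alpha$ by adding a single edge at a vertex of degree $0$. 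Such an operation cannot create a cycle: any cycle through the new edge would have to use two distinct edges incident to $O_\eta$, whereas $O_\eta$ has degree $1$ in $T_{\alpha+1}$. Hence $T_{\alpha+1}$ is again a forest, the induction goes through, and the $G$-orbits tree of $h$ is acyclic. I expect no real obstacle here; the only subtlety is noticing that the recursion always attaches the newly added pair to a completely untouched orbit --- which is forced by the minimality in the choice of $\eta$ --- after which the forest-preservation is pure bookkeeping.
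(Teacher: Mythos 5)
Your proof is correct and rests on exactly the same key observation as the paper's: by the minimal choice of $\eta$, the pair added at stage $\alpha+1$ always attaches to an orbit $O_\eta$ disjoint from ${\rm dom}(h_\alpha)\cup{\rm range}(h_\alpha)$, so it cannot close a cycle. The paper packages this as a minimal-counterexample argument (isolating the last-added edge of a putative cycle via the preceding uniqueness claim), whereas you run it as an induction showing each $T_\alpha$ is a forest; the difference is purely organizational.
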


\begin{proof}[Proof of Claim]
Suppose towards a contradiction that there are $\alpha_0,...,\alpha_{l+1} = \alpha_0$ so that for all $i <l+1$ there is an edge from $O_{\alpha_i}$ to $O_{\alpha_{i+1}}$. By the previous claim, for each $i < l+1$ there is a unique pair $(\xi, \zeta)$ so that $\xi \in O_{\alpha_i}$ and $\zeta \in O_{\alpha_{i+1}}$ and either $h(\xi) = \zeta$ or $h(\zeta) = \xi$. Let $\alpha$ be least so that all of these pairs are in $h_{\alpha+1}$. Since $\alpha$ is least, the pair $(\xi, \zeta) \in h_{\alpha + 1} \setminus h_\alpha$ is such that $\xi \in O_{\alpha_i}$ and $\zeta \in O_{\alpha_{i+1}}$ and either $h(\xi) = \zeta$ or $h(\zeta) = \xi$ for some $i < l+1$ and, moreover, there are already elements from both $O_{\alpha_i}$ and $O_{\alpha_{i+1}}$ in ${\rm dom}(h_\alpha) \cup {\rm range}(h_\alpha)$. But this contradicts the defining procedure for $h_{\alpha+1}$ so we get a contradiction.
\end{proof}

We're now ready to finish the proof. Suppose $w(x) \in W_G(x)$ and $w(h)$ is not the identity. Let $w(x) = g_0 x^{k_0}g_1 x^{k_1}g_2...x^{k_{l-1}}g_l$. Suppose that for some $\alpha$ we have that $w(h) (\alpha) = \alpha$. First let's see that this gives rise to a fixed point for some $g_j$. To see this, consider the sequence of ordinals $\alpha, g_l(\alpha), h^{{\rm sign}(k_{l-1})} (g_l(\alpha)), ..., w(h)(\alpha) = \alpha$ i.e. the sequence of ordinals that appear as we successively evaluate $w(h)(\alpha)$ from the right to the left. Call this the evaluation sequence. Consider also the sequence of orbits $O_{\alpha_0}, O_{\alpha_1}, ..., O_{\alpha_{l + |k_0| + ... + |k_{l-1}|}}$ in which each element of the evaluation sequence is in. Observe that, omitting repetitions that arise from the elements of $G$ appearing in $w(h)$, this sequence of orbits is a walk in the $G$-orbits tree of $H$. Moreover since $\alpha$ is a fixed point, it must be the case that this walk begins and ends in the same orbit and therefore there is an $i$ so that $O_{\alpha_i}$ is the furthest from $O_{\alpha_0}$ in the $G$-orbits tree of $h$. Fix the least such $i$. It follows that there are ordinals $\xi, \zeta$ in the evaluation sequence so that $\xi \in O_{\alpha_{i-1}}$, $\zeta \in O_{\alpha_i}$ and either $h(\xi) = \zeta$ or $h(\zeta) = \xi$. The cases are symmetric so assume the former. We have that since $O_{\alpha_i}$ maximizes the distance from $O_{\alpha_0}$, the next element in the sequence of orbits not equal to $O_{\alpha_i}$ must be closer, and, since the $G$-orbits tree of $h$ is acyclic, it must in fact be $O_{\alpha_{i-1}}$. By the shape of $w(x)$ we have that there is a piece of the word of the form $x^ig_j x$ with $i \in \{-1, 1\}$  and, that piece of the evlauation sequence is then $x(\xi) = \zeta$, $g_j(\zeta)$, $x^{i} (g_j(\zeta))$. Also, we know that $g_j(\zeta) \in O_{\alpha_i}$ since $g_j \in G$ and $O_{\alpha_i}$ is an orbit of $G$ and, by the argument above, $x^{i}( g_j(\zeta)) \in O_{\alpha_{i-1}}$. But by the first claim the only way this could occur is if $i = -1$ and $g_j(\zeta) = \zeta$. Therefore $\zeta$ is a fixed point of $g_j$.

To finish the proof now, note that if $w(h)$ has $\kappa$ many fixed points then, since there are only finitely many $g_j$'s there must be some $j$ so that $\kappa$ many of the fixed points of $w(h)$ give rise to a fixed point for the same $g_j$. Moreover, each one of these fixed points must give rise to a distinct fixed point in $g_j$ since these are all bijections and hence the corresponding evaluation sequences are never equal. But this means that some $g_j$ has $\kappa$ many fixed points contradicting the fact that it is a member of a $\kappa$-cofinitary group.

\end{proof}

A potentially interesting observation is that the above proof works not only for a $\kappa$-cofinitary group but in fact for a group $G \leq S(\kappa)$ so that every non-identity element has ${<}\mu$ fixed points for any fixed $\mu \leq \kappa$, in particular a group where the number of fixed points of each non-identity element is finite. This suggests that such $\mu$-cofinitary subgroups of $S(\kappa)$ may be interesting alternative analogues to the countable case on $\kappa$.

In \cite{Isomofmcgs} Kastermans sketches a corollary of the $\omega$ case of this result, due to Blass, that there is no Abelian maximal cofinitary group. It's clear by looking at the proof that the same idea works verbatim in our context. Hence we get the following.
\begin{corollary}[Blass, See Theorem 12 of \cite{Isomofmcgs}]
There is no Abelian $\kappa$-maximal cofinitary group.
\end{corollary}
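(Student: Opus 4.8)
The plan is to reproduce Blass's argument for the $\omega$-case (Theorem~12 of \cite{Isomofmcgs}) essentially verbatim, the only change being that ``finitely many orbits'' is replaced by the bound supplied by Theorem~\ref{orbits1} together with the regularity of $\kappa$. So suppose toward a contradiction that $G\leq S(\kappa)$ is an abelian $\kappa$-maximal cofinitary group.

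First I would observe that $G$ has an orbit of size $\kappa$: by Theorem~\ref{orbits1} the orbits of $G$ partition $\kappa$ into fewer than $\kappa$ pieces, and since $\kappa$ is regular they cannot all have size ${<}\kappa$ (otherwise $\kappa$ would be a union of ${<}\kappa$ sets each of size ${<}\kappa$), so fix an orbit $O$ with $|O|=\kappa$. The only place abelianness enters is the next step: $G$ acts \emph{freely} on $O$. Indeed, if some $g\in G\setminus\{\id\}$ fixed a point $\alpha\in O$, then for every $\beta=f(\alpha)\in O$ (with $f\in G$) we would get $g(\beta)=g f(\alpha)=f g(\alpha)=f(\alpha)=\beta$, using $gf=fg$; hence $g$ would fix all of $O$, so $|\mathrm{Fix}(g)|\geq|O|=\kappa$, contradicting that $g$ is $\kappa$-cofinitary. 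Thus all point-stabilizers of points in $O$ are trivial, and since $O$ is a single orbit the evaluation map $g\mapsto g(\alpha_0)$ at a fixed $\alpha_0\in O$ is a bijection of $G$ onto $O$; in particular $|G|=\kappa$.

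To finish I would invoke the higher analogue of the classical fact that a countable cofinitary group is never maximal, namely that a $\kappa$-cofinitary group of size ${\leq}\kappa$ always has a proper $\kappa$-cofinitary supergroup; applied to our $G$ this contradicts maximality and completes the proof. This last fact is the one step I expect to require genuine work (it is presumably recorded, or immediate from the constructions, elsewhere in the paper): it is proved by the standard bookkeeping recursion used to build $\kappa$-(maximal) cofinitary groups, since $|G|=\kappa$ and $\kappa^{<\kappa}=\kappa$ give only $\kappa$ many words $w(x)\in W_G(x)$, so one can construct a new generator $h=\bigcup_{\gamma<\kappa}h_\gamma$ as an increasing union of ${<}\kappa$-sized partial injections, diagonalizing at stage $\gamma$ against the $\gamma$-th word exactly as in the proof of Theorem~\ref{orbits1}. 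By contrast, the preceding steps --- regularity producing a $\kappa$-sized orbit, abelianness forcing a free action, and free-plus-transitive forcing $|G|=\kappa$ --- are entirely routine.
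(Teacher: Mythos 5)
Your proof is correct and is essentially the argument the paper intends: the paper simply asserts that Blass's $\omega$-case proof (Kastermans, Theorem 12) works verbatim, and that proof is exactly your reduction via Theorem~\ref{orbits1} (a $\kappa$-sized orbit by regularity), the observation that an abelian group acts regularly on any orbit it acts on transitively (so $|G|=\kappa$), and the fact that no $\kappa$-cofinitary group of size ${\leq}\kappa$ is maximal. For the last step, note it is already available in the paper in ZFC (under $\kappa^{<\kappa}=\kappa$): since $\kappa<\mathfrak{p}(\kappa)$ always holds, Lemma~\ref{addanewelmt} (equivalently the stated inequality $\mathfrak{p}(\kappa)\leq\mathfrak{a}_g(\kappa)$) directly yields a proper $\kappa$-cofinitary supergroup, which is cleaner than the word-by-word diagonalization you sketch (and that diagonalization is not ``as in Theorem~\ref{orbits1}'', whose proof protects orbits rather than words; it would instead use the extension Lemma~\ref{extensions} with side conditions).
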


\section{Controlling the Number of Orbits of a $\kappa$-Maximal Cofinitary Group}

In this section we prove Main Theorem \ref{mainthm2}. Towards this end we recall some facts about the cardinal $\mfp(\kappa)$ and a forcing notion introduced by the first author in \cite{MCGrevisited} for adding a $\kappa$-maximal cofinitary group of a fixed size. We refer the reader to \cite{TowersGapsUnctble} for more information on $\mfp(\kappa)$ and \cite{MCGrevisited} for more information about this forcing notion. 

\begin{definition}
Let $\mathcal F$ be a family of subsets of $\kappa$. We say that $\mathcal F$ has the {\em strong intersection property} if for any subfamily $\mathcal F' \subseteq \mathcal F$ of size ${<}\kappa$ the intersection $\bigcap \mathcal F '$ has size $\kappa$. The {\em pseudointersection number for} $\kappa$, denoted $\mfp(\kappa)$ is the least size of a family $\mathcal F$ with the strong intersection property for which there is no $A \in [\kappa]^\kappa$ which is almost contained\footnote{Here, if $B, C\in [\kappa]^\kappa$ we say that $B$ is {\em almost contained in} $C$ if $B \setminus C$ has size ${<}\kappa$.} in all $F \in \mathcal F$.
\end{definition}

We will not need this definition of $\mfp(\kappa)$ but rather an equivalent one in terms of a certain forcing axiom. To this end we give the following definition.

\begin{definition}
Let $\mathbb P$ be a ${<}\kappa$-closed, $\kappa$-centered poset with $\mathbb P = \bigcup_{\gamma < \kappa} C_\gamma$ witnessing $\kappa$-centeredness. We say that $\mathbb P$ has {\em canonical lower bounds} if there is a function $f = f^\mathbb P:\kappa^{<\kappa} \to \kappa$ such that whenever $\lambda < \kappa$ and $(p_\alpha \; | \; \alpha < \lambda)$ is a decreasing sequence of conditions with $p_\alpha \in C_{\gamma_\alpha}$ then there is a $p \leq p_\alpha$ for all $\alpha$ in $C_{f(\gamma_\alpha \; | \; \alpha < \lambda)}$.
\end{definition}

The significance of this definition is that the forcing axiom for the class of ${<}\kappa$-closed, $\kappa$-centered posets with canonical lower bounds is equivalent to $\mathfrak{p}(\kappa) = 2^\kappa$. More precisely,
\begin{theorem}[Theorem 1.8 of \cite{TowersGapsUnctble}]
If $\mathbb P$ is a ${<}\kappa$-closed, $\kappa$-centered poset with canonical lower bounds and below every $p \in \mathbb P$ there is a $\kappa$-sized antichain. Then for any collection $\mathcal D$ of ${<}\mathfrak{p}(\kappa)$ many dense subsets of $\mathbb P$, there is a filter $G \subseteq \mathbb P$ which meets every element of $\mathcal D$.
\label{forcingaxiom}
\end{theorem}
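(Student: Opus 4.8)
Theorem~\ref{forcingaxiom} is the substantive half of the identity $\mathfrak p(\kappa)=\mathfrak m(\Gamma)$, where $\Gamma$ is the class of ${<}\kappa$-closed, $\kappa$-centered posets with canonical lower bounds such that below every condition there is a $\kappa$-sized antichain; it is the $\kappa$-analogue of Bell's theorem $\mathfrak p=\mathfrak m(\sigma\text{-centered})$, and the plan is to adapt Bell's proof. (The reverse inequality is easy: given $\mathcal A\subseteq[\kappa]^\kappa$ with the strong intersection property, the ``Mathias-type'' poset of pairs $(s,F)$, $s\in[\kappa]^{{<}\kappa}$ bounded, $F\in[\mathcal A]^{{<}\kappa}$, ordered by $(s,F)\le(s',F')$ iff $s$ end-extends $s'$, $F\supseteq F'$ and $s\setminus s'\subseteq\bigcap F'$, lies in $\Gamma$ — here $\kappa^{{<}\kappa}=\kappa$ gives $\kappa$-centeredness — and a filter meeting the $\kappa+|\mathcal A|$ obvious dense sets produces a pseudointersection of $\mathcal A$; since moreover any family of size ${\le}\kappa$ with the strong intersection property has a pseudointersection by a straightforward diagonalisation, this also records that $\kappa<\mathfrak p(\kappa)$, used below.)

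For the hard direction, fix $\mathbb P=\bigcup_{\gamma<\kappa}C_\gamma$ in $\Gamma$ and dense $\{D_\alpha:\alpha<\lambda\}$ with $\lambda<\mathfrak p(\kappa)$. I would first make harmless reductions: replacing each $D_\alpha$ by its downward closure makes it open dense; if $\lambda<\kappa$ the conclusion is immediate by threading a decreasing $\lambda$-sequence of conditions through the $D_\alpha$ using ${<}\kappa$-closure, so we may assume $\kappa\le\lambda$. The real target is then to choose $q_\alpha\in D_\alpha$ for $\alpha<\lambda$ so that $\{q_\alpha:\alpha<\lambda\}$ is \emph{centered} — every subfamily of size ${<}\kappa$ has a common lower bound — after which the collection of such lower bounds generates the desired filter. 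A bare recursion on $\alpha$ fails, since a centered family of size ${\ge}\kappa$ need not admit any single condition below which to choose the next $q_\alpha$; it is exactly this ``simultaneous'' choice that the pseudointersection afforded by $\mathfrak p(\kappa)$ will carry out.

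The heart of the matter is to build from $\mathbb P$ and the $D_\alpha$ a family $\mathcal F$ of subsets of $\kappa$ having the strong intersection property, together with a way of decoding pseudointersections of $\mathcal F$ into such a centered system. Following Bell, I would let the elements of $\kappa$ code ${<}\kappa$-sized approximations to the filter — descending sequences of conditions obtained by canonical moves through $\kappa$-sized antichains and into the dense sets, tagged with the center-indices of their lower bounds (so that amalgamability is tracked). Here ${<}\kappa$-closure supplies the lower bounds at limit lengths, canonical lower bounds make the center-index at a limit length a function of the earlier ones so that the recursion runs to height $\kappa$, and $\kappa$-centeredness ensures that two approximations whose lower bounds lie in the same $C_\gamma$ can always be merged. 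The assumption that below every condition there is a $\kappa$-sized antichain is what forces the members of $\mathcal F$ — for each $\alpha<\lambda$, the set of codes that have ``decided'' $D_\alpha$, plus $\kappa$ further ``cofinality'' sets — to have intersections of size $\kappa$ when fewer than $\kappa$ of them are taken, i.e.\ the strong intersection property. Since $\max(\lambda,\kappa)<\mathfrak p(\kappa)$, a pseudointersection $b$ of $\mathcal F$ exists; reading the $q_\alpha$'s off $b$ and amalgamating (per $\kappa$-centeredness) gives the centered system.

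The step I expect to be the main obstacle is keeping the auxiliary structure genuinely of size $\kappa$ and making the decoding work. Because no single branch of length ${\le}\kappa$ through the space of approximations can mention all $\lambda>\kappa$ many indices, $b$ cannot be a branch, so the centeredness of $\{q_\alpha:\alpha<\lambda\}$ must be extracted by amalgamating the (fewer than $\kappa$) approximations in $b$ relevant to a given subfamily — and the family $\mathcal F$ must be chosen so that its pseudointersections are amalgamable in this sense rather than arbitrary. Bundled with this is the bookkeeping that lets the codes live on $\kappa$ even when $\mathbb P$ and $\lambda$ are large (so that $\lambda^{{<}\kappa}$ need not equal $\lambda$); this is the delicate combinatorial core, carried out in~\cite{TowersGapsUnctble}. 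Each structural hypothesis is used once — ${<}\kappa$-closure for limit lower bounds, canonical lower bounds for the height-$\kappa$ recursion on center-indices, $\kappa$-sized antichains for the width making $\mathcal F$ satisfy the strong intersection property — while $\lambda<\mathfrak p(\kappa)$ enters only to produce $b$.
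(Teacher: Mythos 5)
The paper offers no proof of this statement at all: it is imported verbatim as Theorem~1.8 of \cite{TowersGapsUnctble}, so there is no internal argument to compare yours against. Your sketch correctly identifies the result as the $\kappa$-analogue of Bell's theorem and assigns each hypothesis its proper role (${<}\kappa$-closure for limit stages, canonical lower bounds for running the recursion to height $\kappa$, the $\kappa$-sized antichains for the strong intersection property, $\lambda<\mathfrak{p}(\kappa)$ only to produce the pseudointersection), but, exactly like the paper, it defers the actual construction of the SIP family and the decoding of its pseudointersection to \cite{TowersGapsUnctble}; as an outline it is accurate and consistent with the paper's treatment.
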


The point for us here is that we will show the forcing notion introduced in \cite{MCGrevisited} is a ${<}\kappa$-closed, $\kappa$-centered poset with canonical lower bounds and hence, under the hypothesis $\mfp(\kappa) = 2^\kappa$ we have a forcing axiom for this poset which is what we will need to show Main Theorem \ref{mainthm2}. We now recall the relevant poset.

Let $A$ be an index set and denote by $W_A$ the set of all reduced words in the alphabet $\{a^i \; | \;a\in A \; {\rm and} \; i \in \{-1, 1\}\}$. We denote by $\widehat{W}_A$ the {\em good words} i.e. those that are either the power of a singleton or start and end with a different letter. Given a mapping $\rho: A \to S(\kappa)$, denote by $\hat{\rho}$ its canonical extension to a group homomorphism from the free group $F_A$ on $A$ to $S(\kappa)$. We say that $\rho$ {\em induces a} $\kappa$-{\em cofinitary representation} if the image of $\hat{\rho}$ is $\kappa$-cofinitary. Whenever $s \subseteq A \times \kappa \times \kappa$ we let $s_a = \{(\alpha, \beta) \in \kappa \times \kappa \; | \; (a, \alpha, \beta) \in s\}$ and $s^{-1}_a = \{(\beta, \alpha) \; | \; (a, \alpha, \beta) \in s\}$ for each $a$ in $A$. If each $s_a$ is a partial function we may occasionally abuse notation and write $s_a(\alpha)$ for the unique $\beta$ so that $(a, \alpha, \beta) \in s$ (if it exists). For a word $w \in W_A$ and a set $s \subseteq A \times \kappa \times \kappa$ recall the relation $e_w[s] \subseteq \kappa \times \kappa$ defined recursively by stipulating

\begin{enumerate}
\item
if $w = a$ for some $a \in A$ then $e_w[s] = s_a$.
\item
if $w = a^{-1}$ for some $a \in A$ then $e_w[s] = s_a^{-1}$.
\item
if $w = a^iu$ for some $u \in W_A$, $a \in A$ and $i \in \{-1, 1\}$ without cancelation then $(\alpha, \beta) \in e_w[s]$ if and only if there is a $\gamma \in \kappa$ so that $(\gamma, \beta) \in e_{a^i}[s]$ and $(\beta, \alpha) \in e_u[s](\alpha, \gamma)$.
\end{enumerate}
We also define $e_w[s, \rho]$ for $\rho:B \to S(\kappa)$ inducing a $\kappa$-cofinitary representation by $(\alpha, \beta) \in e_w[s, \rho]$ if and only if $(\alpha, \beta) \in e_w[s \cup \{(b, \gamma, \delta) \: | \; b \in B \; {\rm and} \; \rho(b)(\gamma) = \delta\}]$.

\begin{definition}[See \cite{MCGrevisited}, Definition 2.2]
Let $A$ and $B$ be disjoint sets and let $\rho:B \to S(\kappa)$ induce a $\kappa$-cofinitary representation. The forcing notion $\mathbb Q = \mathbb Q^\kappa_{A, \rho}$ is defined as the set of all pairs $(s, F) \in [A \times \kappa \times \kappa]^{<\kappa} \times [\widehat{W}_{A \cup B}]^{<\kappa}$ so that $s_a$ is injective for each $a \in A$. We let $(s, F) \leq_\mathbb Q (t, E)$ if $s \supseteq t$, $F \supseteq E$ and for every $\alpha \in \kappa$ and $w \in E$ if $e_w[s, \rho](\alpha) = \alpha$ then $e_w[t, \rho](\alpha) = \alpha$ (and in particular is defined).
\label{poset1}
\end{definition}

It's clear that $\mathbb Q^\kappa_{A, \rho}$ is ${<}\kappa$-closed and it's shown in \cite[Lemma 2.3]{MCGrevisited} that $\mathbb Q^\kappa_{A, \rho}$ is $\kappa^+$-Knaster for any $A$ and $\rho$. We want to show that $\mathbb Q^\kappa_{A, \rho}$ has canonical lower bounds when $|A| \leq \kappa$. First observe the following.

\begin{proposition}
If $|A| \leq \kappa$ then $\mathbb Q^\kappa_{A, \rho}$ is $\kappa$-centered.
\end{proposition}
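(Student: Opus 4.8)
The plan is to partition $\mathbb Q^\kappa_{A,\rho}$ according to its first (``stem'') coordinate. For each admissible stem $s \in [A\times\kappa\times\kappa]^{<\kappa}$ (i.e.\ one with every $s_a$ injective) put
\[
C_s \;=\; \{\, (s,F) : (s,F)\in \mathbb Q^\kappa_{A,\rho} \,\} \;=\; \{s\}\times[\widehat W_{A\cup B}]^{<\kappa},
\]
so that $\mathbb Q^\kappa_{A,\rho} = \bigcup_s C_s$. The first point to check is that there are at most $\kappa$ many such $s$: since $|A|\le\kappa$ we have $|A\times\kappa\times\kappa|=\kappa$, and since $\kappa^{<\kappa}=\kappa$ this gives $\bigl|[A\times\kappa\times\kappa]^{<\kappa}\bigr| = \kappa$; the collection of admissible stems is a subfamily of this, hence of size $\le\kappa$. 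Fixing an enumeration of it exhibits $\mathbb Q^\kappa_{A,\rho}$ as a union of $\le\kappa$ many pieces $C_s$.

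Next I would show that each $C_s$ is centered — in fact $<\kappa$-directed with exact lower bounds. Given $\lambda<\kappa$ and conditions $(s,F_i)$ for $i<\lambda$, all in $C_s$, consider $(s,F)$ with $F=\bigcup_{i<\lambda}F_i$. Since $\kappa$ is regular and each $F_i\in[\widehat W_{A\cup B}]^{<\kappa}$, we have $F\in[\widehat W_{A\cup B}]^{<\kappa}$, and the stem $s$ is unchanged, so $(s,F)\in\mathbb Q^\kappa_{A,\rho}$. Moreover $(s,F)\le_{\mathbb Q}(s,F_i)$ for every $i$: indeed $s\supseteq s$ and $F\supseteq F_i$, and the only nontrivial clause in the definition of $\le_{\mathbb Q}$ — that for $w\in F_i$ and $\alpha\in\kappa$, $e_w[s,\rho](\alpha)=\alpha$ implies $e_w[s,\rho](\alpha)=\alpha$ — is literally a tautology, precisely because the two conditions share the stem $s$. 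Thus $(s,F)\in C_s$ is a common lower bound, so $C_s$ is centered, and $\{C_s : s\}$ witnesses that $\mathbb Q^\kappa_{A,\rho}$ is $\kappa$-centered.

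There is no real obstacle in this proposition: the content is simply the observation that the ordering of $\mathbb Q^\kappa_{A,\rho}$ only ``inspects'' the fixed-point condition when the stems differ, so conditions with a common stem are trivially and uniformly compatible, and the hypothesis $|A|\le\kappa$ (together with the standing assumption $\kappa^{<\kappa}=\kappa$) is used only to count the stems. The genuinely combinatorial work is postponed to the following steps, where one shows that this $\kappa$-centered structure has canonical lower bounds: for a decreasing sequence $(p_\alpha\mid\alpha<\lambda)$ with $p_\alpha=(s_\alpha,F_\alpha)\in C_{s_\alpha}$ the natural candidate is $\bigl(\bigcup_\alpha s_\alpha,\bigcup_\alpha F_\alpha\bigr)$, lying in $C_s$ for $s=\bigcup_\alpha s_\alpha$, so $f^{\mathbb Q}$ is essentially ``take the union of the stems''; checking that this is actually a condition below each $p_\alpha$ requires verifying the fixed-point clause of $\le_{\mathbb Q}$, which is the point where the structure of the words and the relations $e_w[\,\cdot\,,\rho]$ really enters.
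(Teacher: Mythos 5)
Your proof is correct and follows essentially the same route as the paper: partition $\mathbb Q^\kappa_{A,\rho}$ by the first coordinate, note that conditions sharing a stem are compatible (indeed ${<}\kappa$-directed, since the fixed-point clause in $\le_{\mathbb Q}$ is vacuous when the stem does not change), and count the stems using $|A|\le\kappa$ and $\kappa^{<\kappa}=\kappa$. The paper's proof is just a terser version of the same observation.
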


\begin{proof}
It suffices to note that for {\em any} $A$ and $\rho$, any two conditions in $\mathbb Q^\kappa_{A, \rho}$ with the same first coordinate are compatible. Thus, if $|A| \leq \kappa$ then there are only $\kappa$ many first coordinates so the sets $C_s = \{(t, F) \in \mathbb Q \; | \; t = s\}$ witness the $\kappa$-centeredness of $\mathbb Q$.
\end{proof}

\begin{lemma}
If $|A| \leq \kappa$ then $\mathbb Q^\kappa_{A, \rho}$ has canonical lower bounds.
\end{lemma}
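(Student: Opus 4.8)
The plan is to show that the natural lower bound of a decreasing sequence---namely the coordinatewise union, which already witnesses that $\mathbb{Q}^\kappa_{A,\rho}$ is ${<}\kappa$-closed---has a first coordinate that depends only on the first coordinates of the conditions in the sequence, so that the map computing this union serves as the required function. Since $|A|\leq\kappa$ and $\kappa^{<\kappa}=\kappa$, there are exactly $\kappa$ many possible first coordinates $s\in[A\times\kappa\times\kappa]^{<\kappa}$; fixing a bijection between these and $\kappa$, I identify the centering index of a condition $(s,F)$ with $s$ itself, so that $C_s=\{(t,F)\in\mathbb{Q} : t=s\}$ is precisely the centering block produced by the proof of the Proposition above. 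Define $f=f^{\mathbb{Q}}\colon\kappa^{<\kappa}\to\kappa$ by letting $f(s_\alpha\mid\alpha<\lambda)$ be the code of $\bigcup_{\alpha<\lambda}s_\alpha$ whenever this union lies in $[A\times\kappa\times\kappa]^{<\kappa}$, and $0$ otherwise (the latter case will not arise for the sequences of interest).

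Now let $\lambda<\kappa$ and let $(p_\alpha\mid\alpha<\lambda)$ be a $\leq_{\mathbb{Q}}$-decreasing sequence with $p_\alpha=(s_\alpha,F_\alpha)\in C_{s_\alpha}$; in particular $s_\alpha\subseteq s_\beta$ and $F_\alpha\subseteq F_\beta$ for $\alpha\leq\beta$. Put $s:=\bigcup_{\alpha<\lambda}s_\alpha$, $F:=\bigcup_{\alpha<\lambda}F_\alpha$ and $p:=(s,F)$. First I would verify $p\in\mathbb{Q}$: by regularity of $\kappa$ both $s$ and $F$ have size ${<}\kappa$, and each $s_a=\bigcup_{\alpha<\lambda}(s_\alpha)_a$ is injective, being an increasing union of injective relations. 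By construction $p\in C_s=C_{f(s_\alpha\mid\alpha<\lambda)}$, so it remains to check $p\leq_{\mathbb{Q}}p_\alpha$ for each $\alpha<\lambda$.

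Fix $\alpha<\lambda$. Since $s\supseteq s_\alpha$ and $F\supseteq F_\alpha$, the only clause of $\leq_{\mathbb{Q}}$ needing attention is that $p$ creates no new fixed points for words in $F_\alpha$: if $w\in F_\alpha$ and $\gamma\in\kappa$ with $e_w[s,\rho](\gamma)=\gamma$, I must show $e_w[s_\alpha,\rho](\gamma)=\gamma$. Here I would invoke the two routine structural facts about the operator $e_w[\,\cdot\,,\rho]$, both proved by induction on the length of $w$: it is monotone in its first argument, and any particular instance $(\gamma,\delta)\in e_w[s,\rho]$ is already witnessed by a finite subset of $s$. So choose a finite $s'\subseteq s$ witnessing $(\gamma,\gamma)\in e_w[s,\rho]$; each of the finitely many triples in $s'$ lies in some $s_{\alpha_i}$, and for $\beta:=\max\bigl(\{\alpha\}\cup\{\alpha_i : i<n\}\bigr)<\lambda$ we get $s'\subseteq s_\beta$, hence $e_w[s_\beta,\rho](\gamma)=\gamma$ by monotonicity of $e_w$. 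As $\beta\geq\alpha$ we have $p_\beta\leq_{\mathbb{Q}}p_\alpha$, and since $w\in F_\alpha$ the defining clause of $\leq_{\mathbb{Q}}$ applied to $p_\beta\leq_{\mathbb{Q}}p_\alpha$ yields $e_w[s_\alpha,\rho](\gamma)=\gamma$, as desired. Thus $p$ is a lower bound of the sequence lying in the prescribed centering block.

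The only step needing genuine care is the finite-witness-plus-monotonicity property of $e_w[\,\cdot\,,\rho]$, but this is exactly the ingredient already implicit in the assertion that $\mathbb{Q}^\kappa_{A,\rho}$ is ${<}\kappa$-closed; everything else is bookkeeping, the substantive observation being merely that the first coordinate of the canonical lower bound, and hence the centering block it belongs to, is a function of the first coordinates of the $p_\alpha$'s alone.
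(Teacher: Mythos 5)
Your proof is correct and follows essentially the same route as the paper's: the canonical lower bound is obtained by taking the (coded) union of the first coordinates, so the centering block of the lower bound is a function of the centering indices alone. You additionally spell out the verification that the coordinatewise union is a lower bound (finite witnesses plus monotonicity of $e_w$), which the paper leaves implicit by appealing to the ${<}\kappa$-closure argument from the earlier work.
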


\begin{proof}
Fix bijections $g_0:\kappa^{<\kappa} \to [A \times \kappa \times \kappa]^{<\kappa}$ and $g_1:\kappa \to [A \times \kappa \times \kappa]^{<\kappa}$. For each $\gamma < \kappa$ let $C_\gamma$ be the set of conditions with first coordinate $g_1 (\gamma)$. As mentioned above these sets are all directed. Let $f:\kappa^{<\kappa} \to \kappa$ be defined by, for each $\lambda < \kappa$ and $(\lambda_\alpha \; | \; \alpha < \lambda) \in \kappa^{<\kappa}$, setting $f(\lambda_\alpha \; | \; \alpha < \lambda) = g_1^{-1}({\rm sup} ( g_0(\lambda_\alpha \: \ \; \alpha < \lambda)))$. In other words, the lower bound of any sequence of conditions $(p_\alpha \; | \; \alpha < \lambda)$ can be found by taking the sup of the first coordinates and looking in the directed set associated with that supremum.
\end{proof}

As a consequence of this lemma we have the following.

\begin{lemma}
Assume that $B$ is a set of size less than $\mathfrak{p}(\kappa)$ and $\rho:B \to S(\kappa)$ induces a $\kappa$-cofinitary representation. Then there is an $h \in S(\kappa) \setminus \hat{\rho}$ so that the group freely generated by the image of $\hat{\rho}$ and $h$ is $\kappa$-cofinitary.
\label{addanewelmt}
\end{lemma}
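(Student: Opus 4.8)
The plan is to apply the forcing axiom (Theorem \ref{forcingaxiom}) to the poset $\mathbb{Q} = \mathbb{Q}^\kappa_{A, \rho}$ with $A = \{a\}$ a singleton disjoint from $B$. By the Proposition and the two Lemmas just proved, $\mathbb{Q}$ is ${<}\kappa$-closed, $\kappa$-centered, and has canonical lower bounds; since $A$ is a singleton one also checks easily that below every condition there is a $\kappa$-sized antichain (e.g. conditions extending the first coordinate by a single new pair $(a, \alpha, \beta)$ with $\beta$ ranging over a $\kappa$-sized set of ``fresh'' ordinals are pairwise incompatible because their first coordinates disagree on $\alpha$ while both being functions there — or more simply, use that one can diagonalize on fixed-point requirements). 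So Theorem \ref{forcingaxiom} applies to any family of ${<}\mathfrak{p}(\kappa)$ dense sets.

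Next I would identify the dense sets whose meeting produces the desired $h$. There are three families, each of size $\leq |B| + \kappa < \mathfrak{p}(\kappa)$. First, for each $\alpha < \kappa$, the set $D^{\mathrm{dom}}_\alpha = \{(s,F) : \alpha \in \mathrm{dom}(s_a)\}$ is dense: given $(s,F)$, pick $\beta \notin \mathrm{ran}(s_a)$ and not occurring in any of the finitely-many-per-word fixed-point configurations forced by $F$, and extend — the point is exactly the standard ``extension lemma'' from \cite{MCGrevisited} showing domains and ranges can always be enlarged while keeping $s_a$ injective and respecting the $e_w$-constraints. Second, symmetrically, $D^{\mathrm{ran}}_\alpha = \{(s,F) : \alpha \in \mathrm{ran}(s_a)\}$ is dense. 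Meeting all of these for the generic filter $G$ yields $h := \bigcup\{s_a : (s,F) \in G\} \in S(\kappa)$. Third, for each good word $w \in \widehat{W}_{A \cup B}$ that genuinely involves the letter $a$, the set $E_w = \{(s,F) : w \in F\}$ is dense (trivially, since one may always add $w$ to the second coordinate, noting that adding a word to $F$ only strengthens future constraints and is always legal). Once $w \in F$ for some condition in $G$, the ordering clause on $\leq_{\mathbb{Q}}$ guarantees that no new fixed points of $e_w[\,\cdot\,,\rho]$ are ever created at later stages, so the fixed-point set of $e_w[h,\rho]$ is contained in $e_w[s,\rho]$ for that $s$, hence has size ${<}\kappa$. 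This shows every element of the group generated by $\mathrm{ran}(\hat\rho)$ and $h$ — other than those already coming from $w$ not involving $a$, which are $\kappa$-cofinitary by hypothesis on $\rho$, together with words where $a$ appears but which reduce to a word not involving $a$ — is $\kappa$-cofinitary; and a word genuinely involving $a$ that represents a nontrivial element of $F_{A \cup B}$ cannot be the identity in $S(\kappa)$ because $h$ together with $\hat\rho$ generate freely, which is precisely what the fixed-point control on all good words secures (a standard argument: if some reduced word acted as the identity it would in particular fix every $\alpha$, contradicting ${<}\kappa$-many fixed points once $\kappa$ is uncountable). Finally $h \notin \hat\rho[F_B]$ since $h$ introduces the new generator $a$.

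The main obstacle — and the only place requiring real care rather than bookkeeping — is verifying the density of $D^{\mathrm{dom}}_\alpha$ and $D^{\mathrm{ran}}_\alpha$, i.e. the extension lemma: given a condition $(s,F)$ and a target $\alpha$ to put into the domain of $s_a$, one must choose the image $\beta = h(\alpha)$ avoiding a set of ``forbidden'' values of size ${<}\kappa$ (those that would force $e_w[s',\rho](\xi) = \xi$ for some $\xi$ and some $w \in F$ where no such fixed point was present before, thereby violating $\leq_{\mathbb{Q}}$). That this forbidden set has size ${<}\kappa$ is exactly the content of the relevant lemma in \cite{MCGrevisited} (each $w \in F$ and each previously-listed potential fixed point contributes only ${<}\kappa$ constraints, $F$ has size ${<}\kappa$, and $\kappa$ is regular); I would cite that lemma rather than reprove it, since the poset and its extension properties are taken verbatim from \cite{MCGrevisited}. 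Everything else is routine: assembling $h$ from the generic filter, reading off $\kappa$-cofinitariness of each generated word from the $E_w$'s, and noting freeness.
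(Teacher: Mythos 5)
Your proposal is correct and follows exactly the route the paper intends: the paper gives no separate argument for Lemma \ref{addanewelmt}, presenting it as an immediate consequence of the fact that $\mathbb Q^\kappa_{\{a\},\rho}$ is ${<}\kappa$-closed, $\kappa$-centered with canonical lower bounds, so that Theorem \ref{forcingaxiom} can be applied to the domain/range dense sets (whose density is Lemma 2.6 of \cite{MCGrevisited}, restated as Lemma \ref{extensions}) together with the word sets $E_w$ — precisely the dense sets you list, with the same fixed-point-freezing and freeness conclusions. Your additional remarks (the $\kappa$-sized antichains below each condition, and counting the dense sets by $|B|+\kappa<\mathfrak{p}(\kappa)$) are the routine verifications the paper leaves implicit.
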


We also get the following.
\begin{theorem}
$\mfp(\kappa) \leq \mathfrak{a}_g(\kappa)$
\end{theorem}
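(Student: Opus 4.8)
The plan is to derive this straight from Lemma~\ref{addanewelmt}. Here $\mathfrak{a}_g(\kappa)$ denotes the least cardinality of a $\kappa$-maximal cofinitary group; such groups exist, since the union of a $\subseteq$-increasing chain of $\kappa$-cofinitary subgroups of $S(\kappa)$ is again $\kappa$-cofinitary (any non-identity element of the union already lies in some member of the chain), so Zorn's lemma lets us extend any $\kappa$-cofinitary group to a maximal one. It therefore suffices to prove the following statement: every $\kappa$-cofinitary group $G \le S(\kappa)$ with $|G| < \mathfrak{p}(\kappa)$ fails to be maximal.

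To prove this I would first cast $G$ in the shape required by Lemma~\ref{addanewelmt}. Take the index set $B := G$ and let $\rho : B \to S(\kappa)$ be the identity map, $\rho(g) = g$. Its canonical extension $\hat{\rho} : F_B \to S(\kappa)$ has image the subgroup generated by $G$, namely $G$ itself, which is $\kappa$-cofinitary by assumption; hence $\rho$ induces a $\kappa$-cofinitary representation in the relevant sense. Since $|B| = |G| < \mathfrak{p}(\kappa)$, Lemma~\ref{addanewelmt} supplies an $h \in S(\kappa)$ outside the image of $\hat{\rho}$ — that is, $h \notin G$ — such that the subgroup of $S(\kappa)$ generated by $G$ and $h$ is freely generated by them (so it is a faithful copy of the free product $G * \langle h\rangle$) and is $\kappa$-cofinitary. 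Thus $\langle G, h\rangle$ is a $\kappa$-cofinitary group properly extending $G$, properly because $h \notin G$, so $G$ is not maximal. As $G$ was an arbitrary $\kappa$-cofinitary group of size $< \mathfrak{p}(\kappa)$, every $\kappa$-maximal cofinitary group has size at least $\mathfrak{p}(\kappa)$, which is the claimed inequality.

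I do not anticipate a genuine obstacle, since all of the real content has been absorbed into Lemma~\ref{addanewelmt}; the only points that need a word of justification are (a) that the identity map on $G$ really does induce a $\kappa$-cofinitary representation, which is immediate because $G$ is already closed under the group operations and is already $\kappa$-cofinitary, and (b) the correct reading of the phrase ``the group freely generated by the image of $\hat{\rho}$ and $h$'' in Lemma~\ref{addanewelmt}: it asserts precisely that $\langle G, h\rangle \le S(\kappa)$ is a faithful copy of the abstract free product $G * \langle h\rangle$, which in particular forces $h \notin G$ and hence the properness of the extension, completing the argument.
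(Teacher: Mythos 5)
Your argument is correct and is essentially the paper's intended derivation: the theorem is presented there as an immediate consequence of Lemma~\ref{addanewelmt}, and you have simply supplied the routine details (maximal $\kappa$-cofinitary groups exist by Zorn's lemma since unions of chains are $\kappa$-cofinitary, and any $\kappa$-cofinitary $G$ with $|G|<\mathfrak{p}(\kappa)$, viewed as a $\kappa$-cofinitary representation via the identity map on $B=G$, admits a proper $\kappa$-cofinitary extension $\langle G,h\rangle$, so $G$ is not maximal). The paper gives no written proof, only the remark that the inequality also follows from the previously known chain $\mathfrak{p}(\kappa)\leq\mathfrak{b}(\kappa)\leq\mathfrak{a}_g(\kappa)$, so your writeup matches the intended route.
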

This was known, though it does not seem to have appeared in the literature in this explicit form. Instead it has been shown that $\mfp(\kappa) \leq \mfb(\kappa)$ (see \cite[Theorem 2.9]{SchilhanPkappa}) and that $\mfb(\kappa) \leq \mathfrak{a}_g(\kappa)$ (see \cite[Theorem 3.2]{MADfamandNeighbors}).

Having set up the necessary facts about $\mathbb Q^\kappa_{A, \rho}$ we are now ready to begin proving Main Theorem \ref{mainthm2}. For convenience we recall it below.

\begin{theorem}
If $\mathfrak{p}(\kappa) = 2^\kappa$ then given any $\lambda < \kappa$ and any partition $\{O_\alpha\; | \; \alpha < \lambda\}$ of $\kappa$ there is a $\kappa$-maximal cofinitary group so that $\mathsf{ORB}(G) = \{O_\alpha \; | \; \alpha < \lambda\}$.
\label{orbits2}
\end{theorem}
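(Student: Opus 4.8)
The strategy is to build the desired $\kappa$-maximal cofinitary group $G$ via a single application of the forcing axiom for $\mathbb Q^\kappa_{A, \rho}$-type posets, working with an index set $A$ of size $2^\kappa$ whose generators are arranged in $\lambda$ blocks, one for each target orbit. Concretely, fix a partition $A = \bigsqcup_{\alpha < \lambda} A_\alpha$ where each $|A_\alpha| = 2^\kappa$, and we will design dense sets so that the generic cofinitary representation $\rho: A \to S(\kappa)$ has the property that each generator $a \in A_\alpha$ restricts to a permutation of $O_\alpha$ that fixes everything outside $O_\alpha$ — in other words $\hat\rho[F_A]$ acts on each $O_\alpha$ separately. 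The orbits of the resulting group then refine $\{O_\alpha\}$; to get equality we add dense sets forcing, say, a fixed generator $a_\alpha \in A_\alpha$ to act transitively enough on $O_\alpha$ (e.g. to realize $O_\alpha$ as a single cycle in the generic, or at least to connect all of $O_\alpha$), so each $O_\alpha$ is exactly one orbit. Finally, maximality is obtained by the usual diagonalization: enumerate all $2^\kappa$ many candidate words/permutations that could extend the group and add dense sets killing each one, exactly as in the construction of a $\kappa$-mcg of size $2^\kappa$ in \cite{MCGrevisited}.

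**Setting up the poset and the dense sets.** First I would modify $\mathbb Q^\kappa_{A, \rho}$ (with $B = \emptyset$, or $B$ a single element if one prefers to grow the group incrementally) to a suborder $\mathbb Q'$ consisting of conditions $(s, F)$ in which every pair $(a, \xi, \eta) \in s$ with $a \in A_\alpha$ satisfies $\xi, \eta \in O_\alpha$; this restriction is clearly ${<}\kappa$-closed, $\kappa$-centered, has canonical lower bounds by the same argument as the Lemma above (the bijections $g_0, g_1$ just range over the restricted set of first coordinates), and below every condition there is a $\kappa$-sized antichain, so Theorem \ref{forcingaxiom} applies. The relevant families of dense sets are: (i) for each $a \in A$ and $\xi \in \kappa$, the set forcing $\xi \in \dom(e_a[s])$ and $\xi \in \ran(e_a[s])$ (so each $\hat\rho(a)$ is a genuine permutation — of $O_\alpha$); (ii) for each $\alpha < \lambda$ and each pair $\xi, \eta \in O_\alpha$, a dense set forcing $\xi$ and $\eta$ into the same $\hat\rho$-orbit, arranged via the distinguished generator $a_\alpha$, so that $O_\alpha$ is a single orbit; (iii) the maximality dense sets. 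For (iii), enumerate in order type $2^\kappa$ all pairs $(\text{word } w, \text{permutation } g)$ — where $w$ ranges over $\widehat W_{A \cup \{x\}}$ and $g$ over $S(\kappa)$ — and for each, a dense set that either adds a fixed-point constraint showing $w$ applied to $g$ would fail to be $\kappa$-cofinitary, or, in the case where $g$ is a candidate new element, forces $g \notin \hat\rho[F_A]$ by creating a disagreement; this is precisely the genericity/maximality argument of \cite{MCGrevisited, Isomofmcgs} and carries over verbatim. There are $2^\kappa = \mathfrak p(\kappa)$ many dense sets in total, so the hypothesis delivers a filter meeting all of them; note one must check $(2^\kappa)^{<\kappa} = 2^\kappa$ and $\kappa^{<\kappa} = \kappa$ to stay within the cardinal arithmetic of the forcing axiom, which holds under our standing assumptions.

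**Verifying the conclusion.** From a filter $G$ meeting all these dense sets, set $s = \bigcup\{s : (s,F) \in G\}$ and $\rho(a) = s_a$ for $a \in A$. The dense sets of type (i) guarantee $\rho(a) \in S(\kappa)$ and, by the block restriction in $\mathbb Q'$, that $\rho(a) \restriction (\kappa \setminus O_\alpha) = \id$ for $a \in A_\alpha$; hence every element of the group $H := \hat\rho[F_A]$ maps each $O_\alpha$ to itself, so $\mathsf{ORB}(H)$ refines $\{O_\alpha : \alpha < \lambda\}$. The dense sets of type (ii) then force each $O_\alpha$ to lie in a single $H$-orbit, so $\mathsf{ORB}(H) = \{O_\alpha : \alpha < \lambda\}$ exactly. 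That $H$ is $\kappa$-cofinitary is built into the order $\leq_{\mathbb Q}$ (a word $w$ placed in the side condition $F$ of some $(s,F) \in G$ can never acquire new fixed points, and every $w$ eventually enters some such $F$ by a density argument, so $e_w[s]$ has ${<}\kappa$ fixed points whenever it is not forced to be the identity — the group is in fact freely generated by $\{\rho(a)\}$, but that is not needed here). Maximality follows from the type (iii) dense sets. The step I expect to be the main obstacle is (ii): one must show that the constraint "$\xi$ and $\eta$ lie in the same $\hat\rho$-orbit, via $a_\alpha$" is genuinely dense in $\mathbb Q'$ — that is, that given any condition, one can extend $s$ along the $a_\alpha$-coordinate inside $O_\alpha$ so as to link $\xi$ to $\eta$ without creating any forbidden fixed point for a word in the (${<}\kappa$-sized) side condition $F$. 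This is a finite-injury-style extension argument: one builds a short $a_\alpha$-path from $\xi$ to $\eta$ using fresh ordinals of $O_\alpha$ (of which there are $\kappa$ many, and only ${<}\kappa$ are "used" by $s$ or appear among fixed-point values that the words in $F$ could detect), exactly mirroring the "linking" extension lemmas of \cite[Section 2]{MCGrevisited}; the only new feature is that everything must be carried out inside the single set $O_\alpha$, which is unproblematic since $|O_\alpha| = \kappa$ (if some $O_\alpha$ has size ${<}\kappa$, it must already be a union of singleton $H$-orbits forced to be connected, a degenerate case handled separately and more easily).
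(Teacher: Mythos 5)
There are two genuine gaps, and each is fatal to the proposal as written. First, your block decomposition produces a group that is not even $\kappa$-cofinitary: if a generator $a \in A_\alpha$ is required to fix every point outside $O_\alpha$, then $\rho(a)$ has $|\kappa \setminus O_\alpha|$ many fixed points, and as soon as the partition is nontrivial (e.g.\ two pieces of size $\kappa$, which is the typical case) this is $\kappa$ many, so $\rho(a)$ is forbidden from belonging to any $\kappa$-cofinitary group. This is exactly why the paper's construction never uses generators supported on a single piece: every permutation added to the group is defined \emph{simultaneously} on all pieces, with $h \restriction U_\gamma$ a (generically chosen) permutation of $U_\gamma$ for every unbounded piece and $h \restriction B_\beta$ borrowed from the base group on every bounded piece, so that $h$ preserves the partition while having ${<}\kappa$ fixed points in total. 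Your ``degenerate case'' remark about small pieces does not address this; the problem already occurs when all pieces have size $\kappa$.

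Second, the single application of the forcing axiom is not available. Theorem \ref{forcingaxiom} (the higher Bell theorem) only produces filters meeting strictly fewer than $\mathfrak{p}(\kappa)$ many dense sets, so under $\mathfrak{p}(\kappa) = 2^\kappa$ you may meet ${<}2^\kappa$ many dense sets, never $2^\kappa$ many; meeting $2^\kappa$ many dense sets of a nontrivial $\kappa$-centered poset in one shot is in general impossible (the analogue of $\MA_{\mathfrak c}$ failing for ccc forcing). This is precisely why the paper runs a transfinite recursion of length $2^\kappa$: at stage $\alpha$ the group $G_\alpha$ built so far has size ${<}2^\kappa = \mathfrak{p}(\kappa)$, so the forcing axiom applies to $\mathbb Q^\kappa_{A,\rho_\alpha}$ with the ${<}\mathfrak{p}(\kappa)$ many dense sets needed to handle the single candidate $f_\alpha$. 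Relatedly, your type (iii) ``maximality dense sets'' are not well defined as density requirements: guaranteeing that a candidate $g$ cannot be adjoined means forcing some word in $g$ and the group to acquire $\kappa$ many fixed points, and whether and how this can be done depends on how $g$ interacts with the partition --- if $g$ maps some $O_\gamma$ into a different $O_\xi$ on an unbounded set, the paper must first add an element $h$ and then kill $g$ via the composition $f_\alpha^{-1}\circ(h\restriction U_\xi)\circ(f_\alpha\restriction U)$, and the whole analysis rests on the notion of a hitable function and on Lemmas \ref{hittinglemma}, \ref{addanewelmt2} and \ref{addahit}, none of which your single-poset scheme replaces. A single dense set ``creating a disagreement'' only ensures $g$ differs from the generators, not that $\langle G, g\rangle$ fails to be $\kappa$-cofinitary. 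The orbit-linking density argument you flag as the main obstacle (your type (ii)) is in fact the unproblematic part; the missing content is the stage-by-stage diagonalization together with the hitability analysis that lets one destroy each candidate while preserving the orbit structure.
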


The rest of the section is devoted to proving Theorem \ref{orbits2}. As in the case of Theorem \ref{orbits1} we follow the corresponding proof of \cite[Theorem 16]{Isomofmcgs}. From now on assume $\mathfrak{p}(\kappa) = 2^\kappa$. This assumption will be used via Theorem \ref{forcingaxiom}. Given a group $G \leq S(\kappa)$ denote by $\mathsf{ORB}(G)$ the set of orbits of $G$. Fix $\lambda, \mu < \kappa$ and partition $\kappa$ into $\lambda$ many bounded pieces $\{B_\alpha \; | \; \alpha < \lambda\}$ and $\mu$ many unbounded pieces $\{U_\gamma \; | \; \gamma < \mu\}$. To begin with fix a group $G_\kappa$, so that $\mathsf{ORB}(G_\kappa) = \{B_\alpha \; | \; \alpha < \lambda\} \cup \{U_\gamma \; | \; \gamma < \mu\}$ and $G_\kappa$ is isomorphic to the free group with $\kappa$ many generators. To see that such a group exists note that,for any $\nu \leq \kappa$ using the forcing notion $\mathbb Q^\nu_{A, \rho}$ and the fact that is is ${<}\nu$-closed, we can find $\nu$-many elements of $S(\nu)$ which generate a $\nu$-cofinitary group which is free and acts transitively on $\nu$. Now, pushing forward these groups onto each $B_\alpha$ and $U_\gamma$ via a bijection with its cardinal we can find a group $G$ of size $\kappa$ which is freely generated and for each $g \in G$ we have that $g \hook U_\gamma$ is an element of the free group we built acting transitively on $U_\gamma$ and idem for $B_\gamma$.

\begin{remark}
There is a subtle difference here between the uncountable and the countable case. Namely, in the countable case we need only one element to generate the type of group described above whereas a counting argument shows that in the uncountable case we need $\kappa$ many.
\end{remark}

Now, enumerate $S(\kappa)$ as $\{f_\alpha \; | \; \kappa < \alpha < 2^\kappa\}$. We will build a continuous, increasing chain of $\kappa$-cofinitary groups $G_\alpha$ for $\kappa \leq \alpha < 2^\kappa$ so that for each $\alpha$ we have $\mathsf{ORB}(G_\alpha) =  \{B_\gamma \; | \; \gamma < \lambda\} \cup \{U_\gamma \; | \; \gamma < \mu\}$ and either $f_\alpha \in G_\alpha$ or $\langle G_{\alpha +1}, f_\alpha\rangle$ is not $\kappa$-cofinitary. Clearly this will suffice to prove the theorem. To begin we need several lemmas. The first was proved by the first author in the original paper on $\kappa$-cofinitary groups.

\begin{lemma}[Lemma 2.6 of \cite{MCGrevisited}]
Let $(s, F)\in \mathbb Q^\kappa_{A, \rho}$, $a \in A$.
\begin{enumerate}
\item
Let $\alpha \in \kappa \setminus {\rm dom}(s_a)$. Then there is an $I = I_{a, \alpha}$ such that $|\kappa \setminus I| < \kappa$ and for all $\beta \in I$ we have that $(s \cup \{(a, \alpha, \beta)\}, F) \leq (s, F)$. 
\item
Let $\beta \in \kappa \setminus {\rm ran}(s_a)$. Then there is a $J = J_{a, \beta}$ such that $|\kappa \setminus J| < \kappa$ and for all $\alpha \in J$ we have that $(s \cup \{ ( a, \alpha, \beta ) \}, F) \leq (s, F)$. 
\end{enumerate}
\label{extensions}
\end{lemma}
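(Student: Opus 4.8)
The plan is to prove (1); part (2) follows by the symmetric argument with the roles of $\dom(s_a)$ and $\ran(s_a)$ interchanged. Fix $(s,F)\in\mathbb Q^\kappa_{A,\rho}$, $a\in A$ and $\alpha\in\kappa\setminus\dom(s_a)$, and call $\beta$ \emph{bad} if $(s\cup\{(a,\alpha,\beta)\},F)\not\leq(s,F)$. I will show that the set of bad $\beta$ has size ${<}\kappa$ and take $I$ to be its complement. Unwinding Definition~\ref{poset1}, $\beta$ is bad exactly when either $\beta\in\ran(s_a)$ (injectivity in coordinate $a$ fails) or there are $w\in F$ and $\gamma\in\kappa$ with $e_w[s\cup\{(a,\alpha,\beta)\},\rho](\gamma)=\gamma$ but $e_w[s,\rho](\gamma)\neq\gamma$, i.e.\ a \emph{new fixed point}. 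Since $|\ran(s_a)|<\kappa$, $|F|<\kappa$ and $\kappa$ is regular, it suffices to fix $w\in F$ and bound by ${<}\kappa$ the set $B_w$ of those $\beta$ producing a new $w$-fixed point.

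So fix $w=c_{\ell-1}^{\epsilon_{\ell-1}}\cdots c_0^{\epsilon_0}$ and suppose $\beta\in B_w$ is witnessed by $\gamma$. Put $s'=s\cup\{(a,\alpha,\beta)\}$ and evaluate $e_w[s',\rho]$ at $\gamma$ letter by letter, forming the evaluation sequence $\gamma=\gamma_0,\dots,\gamma_\ell=\gamma$ with $\gamma_{i+1}=e_{c_i^{\epsilon_i}}[s',\rho](\gamma_i)$. Since $s'$ differs from $s$ only in the triple $(a,\alpha,\beta)$, any step not using that triple is computed from $s$ and $\rho$ alone, and since the fixed point is new the triple is used at some step; let $i_0\leq i_1$ be the first and last such steps. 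At any step where it is used the sequence is pinned to $\{\alpha,\beta\}$: $\gamma_i=\alpha,\gamma_{i+1}=\beta$ if $\epsilon_i=1$, and $\gamma_i=\beta,\gamma_{i+1}=\alpha$ if $\epsilon_i=-1$. The ``outer'' portion of the sequence, from $\gamma_{i_1+1}$ forward through $\gamma_\ell=\gamma_0$ to $\gamma_{i_0}$, uses old data only, so $\gamma_{i_0}=e_u[s,\rho](\gamma_{i_1+1})$ where $u$ is the corresponding subword of a cyclic rotation of $w$; being a composition of the partial injections $s_c^{\pm1}$ ($c\in A$) and the permutations $\hat\rho(c)$ ($c\in B$), $e_u[s,\rho]$ is a partial injection. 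Now distinguish according to $(\gamma_{i_0},\gamma_{i_1+1})\in\{\alpha,\beta\}^2$. If exactly one coordinate equals $\beta$, then $\beta=e_u[s,\rho]^{\pm1}(\alpha)$ is a single value. If both equal $\beta$, then (reducedness of $w$ forces $u$ nonempty) $\beta$ is a fixed point of the partial injection $e_u[s,\rho]$. If both equal $\alpha$, I look instead at the ``inner'' portion from $\gamma_{i_0+1}$ to $\gamma_{i_1}$, pinned to $\beta$ at both ends: if it does not reuse the triple then $\beta=e_v[s,\rho](\beta)$ for $v$ the (nonempty) inner subword; if it does, then passing to its first reuse $i_0'$ gives $\gamma_{i_0'}=e_v[s,\rho](\beta)$ with $\gamma_{i_0'}\in\{\alpha,\beta\}$ and $v$ a nonempty reduced subword, so either $\beta=e_v[s,\rho]^{-1}(\alpha)$ is a single value or $\beta$ is a fixed point of $e_v[s,\rho]$.

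Thus every $\beta\in B_w$ lies in one of finitely many sets (indexed by $w$ and the finite combinatorial choices above), each either a singleton or of the form $\mathrm{Fix}(e_v[s,\rho])$ for a nonempty reduced word $v$ over $A\cup B$. It remains to check that $\mathrm{Fix}(e_v[s,\rho])$ has size ${<}\kappa$ for every nonempty reduced $v$. If $v$ mentions a letter of $A$, write $v=v_2\,c^\delta v_1$ with $c\in A$ and $v_1$ a word over $B$; then $e_v[s,\rho](\gamma)$ is defined only if $\hat\rho(v_1)(\gamma)\in\dom(s_c^\delta)$, a set of size ${<}\kappa$, so $e_v[s,\rho]$ has domain — hence fixed-point set — of size ${<}\kappa$. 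If $v$ is a word over $B$, then $e_v[s,\rho]=\hat\rho(v)\neq\mathrm{id}$ (using injectivity of $\hat\rho$), so it has ${<}\kappa$ fixed points since the image of $\hat\rho$ is $\kappa$-cofinitary. Hence $|B_w|<\kappa$, the set of bad $\beta$ has size ${<}\kappa$, and $I:=\kappa\setminus(\ran(s_a)\cup\bigcup_{w\in F}B_w)$ works. Part (2) is the mirror image: fix $\beta\in\kappa\setminus\ran(s_a)$ and run the same analysis with domains and ranges swapped (i.e.\ with $s_a^{-1}$ in place of $s_a$).

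The crux is the second paragraph: one must carefully extract, from a witnessing evaluation sequence that may thread the new triple many times, a single ``clean'' constraint pinning $\beta$ down to ${<}\kappa$ possibilities, and invoke reducedness (and cyclic reducedness) of the good words in $F$ to guarantee that the subwords $v$ one lands on are genuinely nonempty — this, together with the injectivity of $\hat\rho$ (so that no nonempty word over $B$ collapses to the identity), is exactly what makes the concluding appeal to $\kappa$-cofinitariness of the image of $\hat\rho$ legitimate.
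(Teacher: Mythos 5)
Your proof is correct, and it is essentially the standard argument: the paper itself imports this lemma from \cite{MCGrevisited} without proof, and the proof there proceeds by exactly this kind of case analysis on how the new pair $(a,\alpha,\beta)$ could be threaded through the evaluation of a word $w\in F$ to create a new fixed point, isolating in each case a constraint that pins $\beta$ to a singleton or to the fixed-point set of a partial injection $e_v[s,\rho]$ with $v$ a nonempty reduced subword. One point you flag deserves emphasis: the injectivity of $\hat\rho$ on $F_B$ is genuinely needed and is \emph{not} literally guaranteed by the definition of \emph{inducing a $\kappa$-cofinitary representation} as stated in this paper (which only requires the image of $\hat\rho$ to be $\kappa$-cofinitary). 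Indeed, if one takes $\rho(g)=\rho(h)=\mathrm{id}$, $s=\emptyset$ and the good word $w=aga^{-1}h$, then $e_w[\{(a,\alpha,\beta)\},\rho](\beta)=\beta$ for \emph{every} $\beta$, so no extension of the first coordinate at $a$ is permitted and the lemma fails; thus faithfulness of $\hat\rho$ must be read as part of the hypothesis (as it is in the intended setup, where the image of $\hat\rho$ is freely generated), and your appeal to it is exactly where that hypothesis enters.
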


As an immediate consequence of this we get the following result which will be used in the proof of Theorem \ref{orbits2}. Note that the following is a slight strengthening of Lemma \ref{addanewelmt}.

\begin{lemma}
Assume that $B$ is a set of size less than $\mathfrak{p}(\kappa)$, $T \in [\kappa]^\kappa$ and $\rho:B \to S(\kappa)$ induces a $\kappa$-cofinitary representation. Then there is an $h \in S(\kappa) \setminus \hat{\rho}$ so that the group freely generated by the image of $\hat{\rho}$ and $h$ is $\kappa$-cofinitary and $|h \cap T \times T| = \kappa$.
\label{addanewelmt2}
\end{lemma}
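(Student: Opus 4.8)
The plan is to mimic the proof of Lemma~\ref{addanewelmt} but work with a forcing notion that carries the extra requirement $|h \cap T \times T| = \kappa$ built into its dense sets. Concretely, take $A = \{a\}$ a singleton disjoint from $B$ and consider the poset $\mathbb Q^\kappa_{A, \rho}$ of Definition~\ref{poset1}. By the Proposition and the Lemma preceding Lemma~\ref{addanewelmt} (together with the observation, as in \cite[Lemma 2.3]{MCGrevisited}, that every condition has a $\kappa$-sized antichain below it: this follows because once we have one element in the domain of $s_a$ we can, by Lemma~\ref{extensions}, extend it in $\kappa$-many incompatible ways), this poset is a ${<}\kappa$-closed, $\kappa$-centered poset with canonical lower bounds below whose conditions there are $\kappa$-sized antichains, so Theorem~\ref{forcingaxiom} applies to any family of fewer than $\mathfrak p(\kappa)$ dense sets.

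Next I would list the dense sets. As in the proof of Lemma~\ref{addanewelmt} (which is the $T = \kappa$ case), for each $\alpha < \kappa$ the set $D_\alpha = \{(s,F) : \alpha \in \dom(s_a) \cap \ran(s_a)\}$ is dense, using part~(1) and part~(2) of Lemma~\ref{extensions}: given $(s,F)$, if $\alpha \notin \dom(s_a)$ pick any $\beta \in I_{a,\alpha}$ (this set is co-${<}\kappa$, hence nonempty after also avoiding the ${<}\kappa$-many points already in $\ran(s_a)$), and similarly on the range side. Meeting all the $D_\alpha$ ensures the generic real $h = \bigcup\{s_a : (s,F)\in G\}$ is a total permutation of $\kappa$. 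For each word $w \in \widehat{W}_{A\cup B}$ and each ordinal $\delta < \kappa$, the set $E_{w,\delta} = \{(s,F) : w\in F$ and either $w$ is a singleton power, or $(s,F)$ decides that $e_w[s,\rho]$ has ${<}\kappa$ fixed points in a uniform way$\}$ — more precisely the dense sets used in \cite{MCGrevisited} to guarantee that each nontrivial reduced word evaluates to a $\kappa$-cofinitary permutation; meeting these makes $\langle \hat\rho[F_B], h\rangle$ freely generated and $\kappa$-cofinitary, and $h \notin \hat\rho[F_B]$ follows as in \cite{MCGrevisited} (or is forced by a single further dense set). So far this is exactly Lemma~\ref{addanewelmt}, and the total count of dense sets is $\kappa + |B| + |\widehat W_{A\cup B}| \cdot \kappa < \mathfrak p(\kappa)$ since $|B| < \mathfrak p(\kappa)$ and $\kappa < \mathfrak p(\kappa)$.

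The new ingredient is a family $\{F_\xi : \xi < \kappa\}$ forcing $|h \cap T\times T| = \kappa$. Since $T \in [\kappa]^\kappa$, fix an increasing enumeration and for each $\xi < \kappa$ let
\[
F_\xi = \{(s,F) \in \mathbb Q^\kappa_{A,\rho} : |\{\gamma \in T : (a,\gamma,s_a(\gamma)) \in s \text{ and } s_a(\gamma) \in T\}| \geq \xi\}.
\]
I would check $F_\xi$ is dense: given $(s,F)$, by induction it suffices to add one new pair $(\gamma,\gamma')$ with $\gamma,\gamma' \in T$, $\gamma \notin \dom(s_a)$, $\gamma' \notin \ran(s_a)$. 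Pick $\gamma \in T \setminus \dom(s_a)$ (possible as $|T| = \kappa$ and $|\dom(s_a)| < \kappa$); by Lemma~\ref{extensions}(1) the set $I_{a,\gamma}$ is co-${<}\kappa$, so $I_{a,\gamma} \cap T \setminus \ran(s_a) \neq \emptyset$; choosing $\gamma'$ there yields an extension in $\mathbb Q^\kappa_{A,\rho}$ (injectivity of $s_a$ is preserved) lying below $(s,F)$ with one more $T$-to-$T$ pair. Meeting all $F_\xi$ then forces $h$ to map cofinally many — in fact $\kappa$-many — elements of $T$ into $T$, i.e. $|h \cap T\times T| = \kappa$. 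The total number of dense sets is still $<\mathfrak p(\kappa)$, so Theorem~\ref{forcingaxiom} delivers a filter $G$ meeting all of them, and $h$ read off from $G$ is as required.

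The main obstacle is making sure the "$\kappa$-cofinitary-witnessing" dense sets $E_{w,\delta}$ genuinely deliver a $\kappa$-cofinitary (indeed freely generated) group when combined with the new constraint; but since adding $T$-to-$T$ pairs to $s_a$ is just a special case of the generic extension already handled in \cite{MCGrevisited}, and the ordering $\leq_{\mathbb Q}$ only ever \emph{removes} potential fixed points as conditions grow, there is no interference — the argument of \cite[Section~2]{MCGrevisited} for Lemma~\ref{addanewelmt} goes through unchanged, with the $F_\xi$'s simply appended to the list of dense sets. The only genuinely new verification is the density of $F_\xi$, which, as sketched above, is an immediate application of Lemma~\ref{extensions}(1).
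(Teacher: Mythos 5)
Your overall strategy is exactly the paper's: work with $\mathbb Q^\kappa_{\{a\},\rho}$, note it satisfies the hypotheses of Theorem \ref{forcingaxiom}, keep all the dense sets from Lemma \ref{addanewelmt}, and append new dense sets, verified via Lemma \ref{extensions}(1), which force $s_a$ to acquire pairs in $T\times T$ (the paper even uses the same one-step extension: pick $\beta\in T$ beyond a prescribed bound, then choose the image in $T$ from the co-${<}\kappa$ set $I_{a,\beta}$). However, there is one step in your write-up that does not deliver the stated conclusion: the indexing of the new dense sets by cardinality. Meeting $F_\xi=\{(s,F):|s_a\cap(T\times T)|\geq\xi\}$ for every $\xi<\kappa$ only guarantees $|h\cap(T\times T)|\geq\mu$ for every \emph{cardinal} $\mu<\kappa$. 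When $\kappa$ is a limit cardinal this gives $\kappa$, but when $\kappa=\mu^+$ (e.g.\ $\kappa=\omega_1$, the most basic case) it gives only $|h\cap(T\times T)|\geq\mu$ (for $\omega_1$: merely infinitely many pairs), which is strictly weaker than the required $|h\cap(T\times T)|=\kappa$. Likewise your remark that meeting the $F_\xi$ forces ``cofinally many'' $T$-to-$T$ pairs is not justified: nothing in those dense sets prevents all the pairs from sitting below a fixed bound.

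The repair is precisely what the paper does: for each $\alpha<\kappa$ take the dense set of conditions $(s,F)$ with $s_a\cap\bigl((T\setminus\alpha)\times T\bigr)\neq\emptyset$ (density is your same one-pair extension, choosing $\beta\in T$ above $\alpha$). Meeting these gives a $T\times T$ pair with first coordinate above every $\alpha<\kappa$, hence unboundedly many such pairs, and by regularity of $\kappa$ this yields $|h\cap(T\times T)|=\kappa$ in all cases. With that change (and noting, as you do, that adding these sets keeps the total number of dense sets below $\mathfrak p(\kappa)$ and does not interfere with the cofinitariness dense sets), your argument coincides with the paper's proof. A small further remark: density of your original $F_\xi$ for limit $\xi$ is not a one-step induction but needs the ${<}\kappa$-closure of the poset to take lower bounds; with the bound-indexed dense sets this issue disappears entirely, since a single new pair suffices.
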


\begin{proof}
Let $A = \{a\}$ be a singleton. Fix $(s, F) \in \mathbb Q^\kappa_{A, \rho}$ and $\alpha < \kappa$. Since $T$ is unbounded there is a $\beta > \alpha$ so that $\beta \in T$. Now by Lemma \ref{extensions} there are co-$\kappa$ many $\gamma$ so that $(s \cup \{a, \beta, \gamma\}, F) \in \mathbb Q^\kappa_{A, \rho}$, in particular such a $\gamma$ can be found in $T$. In other words, the set of conditions $(s, F)$ so that $s_a \cap (A \setminus \alpha) \times A \neq \emptyset$ is dense in $\mathbb Q^\kappa_{A, \rho}$. Applying Theorem \ref{forcingaxiom} to this collection of dense sets alongside those used in Lemma \ref{addanewelmt} then gives the requisite $h$.
\end{proof}

We also need a generalization of the notion of a ``hitable" function from Kastermans' proof. 

\begin{definition}
Let $C \in [\kappa]^\kappa$, $G \leq S(C)$ and let $f:C \to C$ be a partial injection of size $\kappa$. We say that $f$ is {\em hitable} with respect to $G$ if for every $g \in G$ we have $|f \setminus g| = \kappa$ and for each $w \in W_G(x)$ we have that either $w(f)$ is the identity or has only ${<}\kappa$ many fixed points (on the domain in which it's defined). 
\end{definition}

Note that if $C = {\rm dom}(f)$ then being hitable means that $f \notin G$ and $\langle G, f\rangle$ is $\kappa$-cofinitary. The point of this definition is that we need to construct our groups in such a way that if $f_\alpha$ is hitable with respect to $G_\alpha$ but $\langle G_\alpha, f_\alpha\rangle$ collapses orbits then we need a way to ``kill" the fact that $f_\alpha$ is hitable without changing the orbits of $G_\alpha$. To explain this more succinctly, for $G \leq H \leq S(\kappa)$ let us say that $H$ {\em preserves the orbit structure of} $G$ if $\mathsf{ORB}(G) = \mathsf{ORB}(H)$.

\begin{lemma}
Let $C \in [\kappa]^\kappa$ and $f:C \to C$ be a partial injection of size $\kappa$. Suppose that $G \leq S(\kappa)$ is a $\kappa$-cofinitary group induced by a mapping $\rho:B \to S(\kappa)$, $A$ is a set disjoint from $B$ and $f$ is hitable with respect to $G$. Let $(s, F) \in \mathbb Q^\kappa_{A, G}$ and $a \in A$. Then there is an $\alpha \in {\rm dom}(f)$ so that $(s \cup \{(a, \alpha, f(\alpha))\}, F) \leq (s, F)$.
\label{hittinglemma}
\end{lemma}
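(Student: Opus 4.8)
The statement to prove is Lemma~\ref{hittinglemma}: given a $\kappa$-cofinitary $G$ (induced by $\rho: B \to S(\kappa)$), a partial injection $f: C \to C$ of size $\kappa$ that is hitable w.r.t.\ $G$, a condition $(s, F) \in \mathbb Q^\kappa_{A, G}$ and $a \in A$, we must find $\alpha \in \dom(f)$ with $(s \cup \{(a, \alpha, f(\alpha))\}, F) \leq (s, F)$. The plan is to use Lemma~\ref{extensions} twice to restrict to a co-$\kappa$ set of "safe" candidates, and then invoke hitability of $f$ to pick $\alpha$ inside that safe set.

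First I would pick $\alpha \in \dom(f)$ arbitrary for the moment and think about what must be checked. To verify $(s \cup \{(a, \alpha, f(\alpha))\}, F) \leq (s, F)$ we need: (i) $a$-injectivity is preserved, i.e.\ $\alpha \notin \dom(s_a)$ and $f(\alpha) \notin \ran(s_a)$; and (ii) for every $w \in F$ and every $\gamma$, if $e_w[s \cup \{(a,\alpha,f(\alpha))\}, G](\gamma) = \gamma$ then already $e_w[s, G](\gamma) = \gamma$ — i.e.\ adding the single new pair $(a, \alpha, f(\alpha))$ does not create any new fixed point of any word in $F$. Since $\dom(s_a)$ and $\ran(s_a)$ have size $<\kappa$, requirement (i) excludes only $<\kappa$ many values of $\alpha$ (note $\dom(f)$ has size $\kappa$). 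For requirement (ii): Lemma~\ref{extensions}(1) applied to $\alpha \notin \dom(s_a)$ gives a co-$\kappa$ set $I_{a,\alpha}$ of "good second coordinates" $\beta$ such that $(s \cup \{(a,\alpha,\beta)\}, F) \leq (s,F)$; symmetrically, working with the second coordinate fixed, Lemma~\ref{extensions}(2) gives for each $\beta \notin \ran(s_a)$ a co-$\kappa$ set $J_{a,\beta}$. So: if $\alpha \notin \dom(s_a)$ and $f(\alpha) \in I_{a,\alpha}$, then the extension is a valid condition below $(s,F)$.

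The remaining task is to find $\alpha \in \dom(f)$ with $f(\alpha) \in I_{a,\alpha}$ (after deleting the $<\kappa$ bad values from (i)). This is where hitability enters. The map $\alpha \mapsto I_{a,\alpha}$ assigns to each relevant $\alpha$ a co-$\kappa$ set; we want $f$ to "hit" this assignment in the sense that some $\alpha$ in its domain lands in the corresponding target set. Here I would recall the precise formulation of hitability in Kastermans' original argument and in the present paper: a careful reading of Lemma~\ref{extensions} shows that the complement $\kappa \setminus I_{a,\alpha}$ is governed by finitely many words $w \in F$ and the finite data of $s$, so in fact $\kappa \setminus I_{a,\alpha}$ is contained in a finite union of sets of the form $\{\,\delta : e_{w'}[s,G](\delta) = \alpha\,\}$ (the $\alpha$-preimages under "partial" words) together with $\ran(s_a)$ — all of size $<\kappa$, and varying "uniformly" in $\alpha$. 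The point of hitability is exactly that $f$ cannot be "avoided" by such a bad-set assignment: if no $\alpha \in \dom(f)$ had $f(\alpha) \in I_{a,\alpha}$, one could read off from this a nontrivial word $w(x)$ built from elements of $G$ together with $x = f$ such that $w(f)$ has $\kappa$ many fixed points (essentially, the "bad" pairs $(\alpha, f(\alpha))$ all satisfy $e_{w'}[s,G](f(\alpha)) = \alpha$ for some fixed word $w'$, which translates to $f$ agreeing with an element of $G$ on $\kappa$ many points, or to a genuine word in $f$ being non-identity with $\kappa$ fixed points), contradicting hitability.

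The main obstacle is the combinatorial bookkeeping in the previous step: making precise the claim that the bad second-coordinate sets $\kappa \setminus I_{a,\alpha}$ are cut out by finitely many words and that failure to find a good $\alpha$ yields a witnessing word for the failure of hitability. I would handle this by unwinding the proof of Lemma~\ref{extensions} (Lemma 2.6 of \cite{MCGrevisited}) to extract the explicit description of $\kappa \setminus I_{a,\alpha}$, then doing a pigeonhole over the finitely many words in $F$: if $\{\alpha \in \dom(f) : f(\alpha) \notin I_{a,\alpha}\}$ has size $\kappa$, then for one fixed word $w \in F$ and one fixed "position" within $w$ where the new letter $a$ can occur, $\kappa$ many pairs $(\alpha, f(\alpha))$ are forced to satisfy a single equation of the form $e_{u}[s,G](f(\alpha)) = \alpha$; this then either exhibits $f$ restricted to a $\kappa$-set inside some $g \in G$ (contradicting $|f \setminus g| = \kappa$, i.e.\ $|f \cap g| < \kappa$, which follows from $f \notin G$ being part of hitability — wait, more carefully, from $|f \setminus g| = \kappa$ for *all* $g$, together with the assumption that all the $w(f)$ behave), or produces $w'(f)$ non-identity with $\kappa$ fixed points. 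Either way we reach the desired contradiction, so a good $\alpha$ exists. I will present these two cases cleanly; the rest is routine.
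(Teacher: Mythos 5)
Your reduction is fine as far as it goes: if $\alpha\notin{\rm dom}(s_a)$, $f(\alpha)\notin{\rm ran}(s_a)$ and $f(\alpha)\in I_{a,\alpha}$, then indeed $(s\cup\{(a,\alpha,f(\alpha))\},F)\leq(s,F)$ by Lemma \ref{extensions}. But the whole content of the lemma is then concentrated in the claim that some $\alpha\in{\rm dom}(f)$ satisfies $f(\alpha)\in I_{a,\alpha}$, and there your argument has a genuine gap. Lemma \ref{extensions} is a bare existence statement: it gives no description of $\kappa\setminus I_{a,\alpha}$ and no uniformity in $\alpha$, and the description you conjecture (a finite union of sets $\{\delta\;:\;e_{w'}[s,G](\delta)=\alpha\}$ together with ${\rm ran}(s_a)$) is not correct as stated. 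First, $F$ has size ${<}\kappa$, not finite, and $s$ is ${<}\kappa$-sized, not finite (the pigeonhole survives by regularity, but the bookkeeping changes). Second, the constraints cutting out bad second coordinates are evaluated in $s$, so they involve the ${<}\kappa$-sized partial injections $s_b$ for the \emph{other} letters $b\in A$; they are not directly equations relating $f$ to elements of $G$, and one must first dispose of any word containing another $A$-letter by a cardinality count (this is the paper's Case 2). Third, and most importantly, bad $\beta$'s also arise from evaluations in which the new pair $(\alpha,\beta)$ is used at \emph{two} occurrences of $a^{\pm 1}$ inside a single word, and your scheme, which pigeonholes onto one fixed position of $a$ and a single equation $e_u[s,G](f(\alpha))=\alpha$, does not see this pattern at all.

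That double-occurrence pattern is exactly where your intended contradiction with hitability breaks down. If, substituting $f$ for the letter $a$, the word $w$ evaluates to the identity on its domain --- an alternative that the definition of hitability explicitly permits --- then ``$\kappa$ many bad pairs'' yield neither a non-identity word $w'(f)$ with $\kappa$ fixed points nor a $g\in G$ agreeing with $f$ on a $\kappa$-sized set, so no contradiction is available; one must instead argue positively, choosing $\kappa$ many $\alpha$ for which $(\alpha,f(\alpha))$ can be used in at most one occurrence of $a$ in any single evaluation (and, to set this up, first thinning $f$ so that ${\rm dom}(f)\cap{\rm dom}(s_a)=\emptyset$ and $f$ has no fixed points). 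This is how the paper proceeds: it does not use Lemma \ref{extensions} here, but reduces to $|F|=1$, cases on whether $w$ omits $a$, contains another $A$-letter, or contains only $a^{\pm1}$ and $B$-letters, and in the last case applies hitability directly to $e_w[f,\rho]$, splitting into the ``${<}\kappa$ fixed points'' subcase and the ``identity'' subcase handled by the thinning just described. Your proposal defers precisely this combinatorial core to an unperformed unwinding of Lemma 2.6 of \cite{MCGrevisited}, and its claimed dichotomy cannot be reached in the identity subcase, so as written the proof is incomplete.
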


\begin{proof}
It suffices to prove the lemma in the case that $|F| = 1$ since we can iterate the argument ${<}\kappa$ many times to generalize it. Suppose that $F = \{w\}$. Fix $a \in A$. To begin, observe that we may assume that ${\rm dom}(f) \cap {\rm dom}(s_a) = \emptyset$ since $|{\rm dom}(s_a) | < \kappa$ and $|{\rm dom}(f)| = \kappa$ so we can ``thin out" $f$ to a hitable function with domain disjoint from $s_a$. Similarly we can assume that $f$ itself has no fixed points by thinning out. 

We will show that there are $\kappa$ many $\alpha \in {\rm dom}(f)$ so that the fixed points of $e_w[s \cup \{(a, \alpha, f(\alpha)\}, \rho]$ are the same as the fixed points of $e_w[s, \rho]$. Clearly this suffices to prove the lemma. There are several cases.

\noindent \underline{Case 1}: There is no occurrence of $a$ in $w$. In this case $e_w[s, \rho] = e_w[s \cup \{(a, \alpha, f(\alpha))\}, \rho]$ so there are no new fixed points.

\noindent \underline{Case 2}: There are both occurrences in $w$ of $a$ or $a^{-1}$ and of some other $b \neq a$ or $b^{-1}$ with $b \in A$. In this case, since $|s_b| < \kappa$ and $|{\rm dom}(f)| = \kappa$, there are $\kappa$ many points in the domain of $f$ so that for any $\gamma < \kappa$ if $e_w[s \cup \{(a, \alpha, f(\alpha))\}, \rho] (\gamma)$ is defined then already $e_w[s, \rho] (\gamma)$ is defined. Therefore again $e_w[s, \rho] = e_w[s \cup \{(a, \alpha, f(\alpha))\}, \rho]$

\noindent \underline{Case 3}: Only $a$, $a^{-1}$ and elements of $B$ occur in $W$. In this case there are two further subcases. To explain, let $e_w[f, \rho]$ denote the partial permutation obtained by replacing every instance of $s_a$ in the evaluation of $w$ with $f$. In essence, this is akin to treating the word $w$ as a word in ${\rm Im}(\hat{\rho})$ with a free variable and substituting in $f$ for this free variable. Since $f$ is hitable with respect to ${\rm Im}(\hat{\rho})$, it must be the case that $e_w[f, \rho]$ is either the identity on its domain or is $\kappa$-cofinite and these are the two subcases. 

If $e_w[f, \rho]$ is $\kappa$-cofinite then there is a $\kappa$ sized subset of $\alpha$ in the domain of ${\rm dom}(f)$ so that the pair $(\alpha, f(\alpha))$ does not lead to any fixed points in $e_w[f, \rho]$ and therefore (recalling that the domains of $s_a$ and $f$ are disjoint by assumption), $e_w[s \cup \{(a, \alpha, f(\alpha))\}, \rho]$ has the same fixed points as $e_w[s, \rho]$ for any such $\alpha$.

If $e_w[f, \rho]$ is the identity on its domain, then it must be the case that there are two occurrences of $a$ or $a^{-1}$ in the word $w$. Thus there is either an occurrence of $a^2$, $a^{-2}$ or $a^ib_0...b_ka^j$ for $b_0,...,b_k \in B$ and $i, j \in \{-1, 1\}$. In any of these cases note that there are ${<}\kappa$ many $\gamma$ so that the same $(\alpha, f(\alpha))$ is
used in the evaluation of $e_w[f, \rho](\gamma)$. Therefore we can find a $\kappa$-sized subset of $\alpha$ in the ${\rm dom}(f)$ so that $(\alpha, f(\alpha))$ is only used in one of the two occurrences and thus for any such $\alpha$ we have $e_w[s \cup \{(a, \alpha, f(\alpha))\}, \rho]$ will not contain any new elements so once again $e_w[s, \rho] = e_w[s \cup \{(a, \alpha, f(\alpha))\}, \rho]$ so there are no new fixed points.

Since this concludes all of the cases the proof is not complete.
\end{proof}

\begin{lemma}
Suppose $G \leq S(\kappa)$ is a $\kappa$-cofinitary group of size ${<}\mathfrak{p}(\kappa)$, $C \in [\kappa]^\kappa$ an $f:C \to C$ is an injection which is hitable with respect to $G$. Then there is a permutation $g \in S(\kappa) \setminus G$ so that $\langle G, g\rangle$ is $\kappa$-cofinitary and $|g \cap f|= \kappa$.
\label{addahit}
\end{lemma}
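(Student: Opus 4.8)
The plan is to use the forcing notion $\mathbb{Q}^\kappa_{A, G}$ with $A = \{a\}$ a singleton, together with the forcing axiom from Theorem \ref{forcingaxiom}, applied to a carefully chosen family of ${<}2^\kappa$ many dense sets. Since $|G| < \mathfrak{p}(\kappa)$, the set $W_G(x)$ of words has size $|G| + \aleph_0 < \mathfrak{p}(\kappa)$, so the whole collection of dense sets we assemble will have size ${<}\mathfrak{p}(\kappa)$, which is what Theorem \ref{forcingaxiom} requires (note $\mathbb{Q}^\kappa_{A, G}$ is ${<}\kappa$-closed, $\kappa$-centered with canonical lower bounds by the preceding lemmas, and below every condition there is a $\kappa$-sized antichain — one can split using the many choices of value for a new point of $s_a$). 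The permutation $g$ will be $\bigcup G \cdot s_a$ where $G \cdot$ is the generic filter, i.e. the union of the first coordinates; standard arguments as in Lemma \ref{addanewelmt} show $g$ is a total permutation and $\langle G, g \rangle$ is freely generated and $\kappa$-cofinitary.

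The dense sets fall into three groups. First, for each $w \in W_G(x)$ and each $\alpha < \kappa$ we include the dense set of conditions $(s, F)$ forcing the relevant information about $w(g)$ at $\alpha$ — precisely the dense sets used in Lemma \ref{addanewelmt} which guarantee that $w(g)$ is either the identity or has ${<}\kappa$ many fixed points, and those guaranteeing $g$ is total and surjective. Second, for each $g' \in G$ we include the dense set $\{(s,F) : |s_a \setminus g'| \geq$ (some large enough bound)$\}$ ensuring $g \notin G$, exactly as in Lemma \ref{addanewelmt}. Third — and this is the new ingredient beyond Lemma \ref{addanewelmt} — for each $\beta < \kappa$ we include the dense set $D_\beta = \{(s, F) \in \mathbb{Q}^\kappa_{A, G} : \exists \alpha \in \mathrm{dom}(f),\ \alpha > \beta,\ (a, \alpha, f(\alpha)) \in s\}$. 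Density of $D_\beta$ is exactly the content of Lemma \ref{hittinglemma}: given any $(s, F)$, since $\mathrm{dom}(f)$ has size $\kappa$ we may pass to $\alpha \in \mathrm{dom}(f)$ with $\alpha > \beta$ and $\alpha \notin \mathrm{dom}(s_a)$, and Lemma \ref{hittinglemma} produces such an $\alpha$ with $(s \cup \{(a, \alpha, f(\alpha))\}, F) \leq (s, F)$. Meeting all the $D_\beta$ forces $\{\alpha \in \mathrm{dom}(f) : g(\alpha) = f(\alpha)\}$ to be unbounded in $\kappa$, hence of size $\kappa$, which gives $|g \cap f| = \kappa$.

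The verification that $\langle G, g \rangle$ is $\kappa$-cofinitary is then routine given the first group of dense sets: any non-trivial reduced word $w(g)$ has its fixed-point set controlled at each $\alpha$ by the generic filter, so if $w(g)$ is not the identity it has ${<}\kappa$ fixed points — this is verbatim the argument of Lemma \ref{addanewelmt} (and ultimately of \cite[Lemma 2.3, 2.6]{MCGrevisited}), since the extra dense sets $D_\beta$ only constrain which pairs $(a, \alpha, f(\alpha))$ enter $s$ and do not interfere with the ${<}\kappa$-closure or the fixed-point bookkeeping built into the order on $\mathbb{Q}^\kappa_{A, G}$. That $g$ is freely generated over $G$ also follows as in Lemma \ref{addanewelmt}, and freeness together with $g \notin G$ is ensured by the second group.

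I expect the main obstacle to be purely bookkeeping: confirming that the cardinality of the total family of dense sets is genuinely ${<}\mathfrak{p}(\kappa)$ (it is $|W_G(x)| \cdot \kappa + \kappa < \mathfrak{p}(\kappa)$ since $\kappa \leq \mathfrak{p}(\kappa)$ and $|G| < \mathfrak{p}(\kappa)$), and double-checking that Lemma \ref{hittinglemma} applies in the form needed — namely that $f$ being hitable with respect to $G$ is preserved after thinning $f$ to have domain disjoint from $\mathrm{dom}(s_a)$ and no fixed points, which is immediate since any subset of a hitable function is hitable. No genuinely new combinatorics is required; the lemma is essentially the conjunction of Lemma \ref{addanewelmt} (existence of a generic $\kappa$-cofinitary extension) and Lemma \ref{hittinglemma} (the hitting density), packaged through the forcing axiom.
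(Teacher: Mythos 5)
Your proposal is correct and follows essentially the same route as the paper: the paper's proof likewise invokes Lemma \ref{hittinglemma} to get, for each $\beta<\kappa$, density of the conditions containing some $(a,\alpha,f(\alpha))$ with $\alpha>\beta$, and then applies Theorem \ref{forcingaxiom} together with the dense sets from Lemmas \ref{addanewelmt} and \ref{addanewelmt2} to produce $g$. Your extra bookkeeping (the count $|W_G(x)|\cdot\kappa+\kappa<\mathfrak{p}(\kappa)$ and the remark that thinning preserves hitability) only makes explicit what the paper leaves implicit.
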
 

\begin{proof}
By Lemma \ref{hittinglemma}, for each $\alpha < \kappa$ and $a \in A$ there is a dense set of conditions $(s, F)$ in $\mathbb Q^\kappa_{A, \rho}$ so that there is a $\beta > \alpha$ and $(a, \beta, f(\beta)) \in s$. Given this the construction of $g$ follows similarly to that in Lemmas \ref{addanewelmt} and \ref{addanewelmt2}.
\end{proof}

We can now prove Theorem \ref{orbits2}.

\begin{proof}[Proof of Theorem \ref{orbits2}]
As mentioned before we will construct a continuous increasing sequence of $\kappa$-cofinitary free groups $G_\alpha$ for $\kappa \leq \alpha < 2^\kappa$. The desired group will be $G = \bigcup_{\kappa \leq \alpha <2^\kappa} G_\alpha$. We already stated what $G_\kappa$ is and for $\kappa < \lambda < 2^\kappa$ limit we have that $G_\lambda = \bigcup_{\gamma < \lambda} G_\gamma$. Fix $\alpha$ and assume that we have constructed $G_\alpha$ and that we have $\rho_\alpha: \alpha \to S(\kappa)$ induces a $\kappa$-cofinitary representation with ${\rm Im}(\hat{\rho}_\alpha) = G_\alpha$. We need to find a new permutation $h:\kappa \to \kappa$ so that $\langle G_\alpha , h\rangle$ is $\kappa$-cofinitary, $\langle G_\alpha , h\rangle \cong G_\alpha * \mathbb Z$ and if $f_\alpha \notin G_\alpha$ then $\langle G_\alpha, h, f_\alpha\rangle$ is not $\kappa$-cofinitary. We will define $h$ separately on each $U_\gamma$ and $B_\gamma$ in such a way that $h\hook U_\beta: U_\beta \to U_\beta$ is a bijection for each $\beta$ and similarly for each $B_\beta$.

For each $B_\beta$ let $h\hook B_\beta = g \hook B_\beta$ for some $g \in G_\kappa$. Note that since there are less than $\kappa$ many bounded subsets $B_\gamma$, we don't need to be worried if $w(h)  \hook \bigcup_{\gamma < \lambda} B_\gamma$ has fixed points for any $w \in W_\alpha$. It suffices to ensure that for each $U_\gamma$ and each word $w \in W_\alpha$ the permutation $w(h) \hook U_\gamma: U_\gamma \to U_\gamma$ has ${<}\kappa$ many fixed points\footnote{Note this is where the fact that we have ${<}\kappa$ many orbits is used and it's exactly this that would break if we tried to rerun the proof with $\kappa$ many orbits.}.

We will construct $h\hook U_\gamma$ differently depending on $f_\alpha$. 

\noindent \underline{Case 1}: $f_\alpha \in G_\alpha$ or $\langle G_\alpha, f_\alpha \rangle$ is not cofinitary. In this case we don't need to worry about $f_\alpha$ and so we construct $h \hook U_\gamma$ for each $\gamma$ using Lemma \ref{addanewelmt}.

\noindent \underline{Case 2}: $f_\alpha \notin G_\alpha$, $\langle G_\alpha, f_\alpha \rangle$ is cofinitary and there a $\gamma$ and an unbounded subset $U \subseteq U_\gamma$ so that $f_\alpha \hook U:U \to U$. Fix such a $\gamma$ and $U$. This means that $f_\alpha \hook U$ is hitable, and, on this unbounded subset respects the orbit $U_\gamma$. Therefore by Lemma \ref{addahit} we can construct $h\hook U_\gamma: U_\gamma \to U_\gamma$ so that $|h \hook U \cap f\hook U| = \kappa$. Construct the remaining $h \hook U_\xi$ as in Case 1. Note that in this case now $\langle G_\alpha, h, f_\alpha\rangle$ will no longer be cofinitary since $f_\alpha^{-1} \circ h \hook U$ will have $\kappa$ many fixed points.

\noindent \underline{Case 3}: The first two cases fail. In particular, $f_\alpha \notin G_\alpha$, $\langle G_\alpha, f_\alpha \rangle$ is cofinitary and there are distinct $ \gamma, \xi < \mu$ so that $|f_\alpha \cap U_\gamma \times U_\xi| = \kappa$. Fix such $\gamma$ and $\xi$ and let $U \subseteq U_\gamma$ be unbounded so that ${\rm Im}(f \hook U) \subseteq U_\xi$. We construct $h\hook U_\xi$ and $h\hook U_\gamma$ as follows. First use Lemma \ref{addanewelmt2} to construct $h \hook U_\xi$ so that $h \hook U_\xi$ moves $\kappa$ many elements of ${\rm Im}(f_\alpha)$ to itself. Now consider the function $f_\alpha^{-1} \circ (h \hook U_\xi) \circ (f_\alpha \hook U)$. This is a partial function from $U_\gamma$ to $U_\gamma$ with unbounded domain. If it is hitable with respect to this unbounded domain then follow the same procedure as in Case 2 to make sure that $h\hook U_\gamma$ and $f_\alpha^{-1} \circ (h \hook U_\xi) \circ (f_\alpha \hook U)$ intersect on a set of size $\kappa$. If $f_\alpha^{-1} \circ (h \hook U_\xi) \circ (f_\alpha \hook U)$ is not hitable on any unbounded set then already we have ensured that $f_\alpha$ cannot be added to $G_{\alpha + 1}$. In either case now extend $h$ to all $U_\zeta$ as in Case 1.

Finally let $\rho_{\alpha + 1}:\alpha + 1 \to S(\kappa)$ extend $\rho_\alpha$ by stipulating that $\rho_{\alpha+1} (\alpha) = h$. By what we have shown we get that $G_{\alpha+1} : = {\rm Im}(\hat{\rho}_{\alpha + 1}) \supseteq G_\alpha$ is a $\kappa$-cofinitary group respecting the orbits of $G_\alpha$ so that if $f_\alpha \notin G_\alpha$ then $\langle G_{\alpha+1}, f_\alpha \rangle$ is not $\kappa$-cofinitary. This completes the construction.

Let $G = \bigcup_{\alpha < 2^\kappa} G_\alpha$. Clearly this is a $\kappa$-cofinitary group whose orbits are exactly $\{B_\alpha \; | \; \alpha < \lambda\}$ and $\{U_\gamma \; | \; \gamma < \mu\}$. We need to check that $G$ is maximal. Suppose $f = f_\alpha \in S (\kappa)$ is a permutation so that $f_{\alpha} \notin G$. By our construction that means that the element $h$ added at stage $\alpha$ was such that either $f_\alpha \circ h^{-1}$ or $f_\alpha^{-1} \circ h \circ f_\alpha$ has $\kappa$ many fixed points. In either case this means that $\langle G, f_\alpha\rangle$ is not $\kappa$-cofinitary thus completing the proof.
\end{proof}

\section{Universal $\kappa$-Maximal Cofinitary Groups}

In this section we prove Main Theorem \ref{mainthm3}, which generalizes \cite[Theorem III.15]{KastermansPhD}\footnote{Kastermans actually works under the assumption that $\CH$ holds as opposed to $\MA$ and just shows that the group embeds all countable groups. However, it's obvious how to generalize his construction to $\MA$ and ${<}2^{\aleph_0}$.}. The main ingredient is a slight augmentation of the forcing notion $\mathbb Q^\kappa_{A, \rho}$ to one which extends the image of $\rho$ to a $\kappa$-cofinitary group which embeds an arbitrary group $H$ as opposed to just the free group $F_A$. Let $\rho:B \to S(\kappa)$ be a mapping inducing a $\kappa$-cofinitary representation with $H \cap B = \emptyset$ (as sets). Considering $H$ as a set, let $W_H$ be the words in $\{a^{i} \; | \; a \in H \; i \in \{-1, 1\}\}$. For a word $w \in W_H$ we write $w \cong 1$ if by applying the rules $aa^{-1} = a^{-1}a = 1$ and all cancellations that result from the operation of $H$ gives the identity. 
\begin{definition}
Let $H$ be a group, $B$ a set disjoint from $H$ and $\rho:B \to S(\kappa)$ a mapping inducing a $\kappa$-cofinitary representation. The forcing notion $\mathbb Q^\kappa_{H, \rho}$ consists of pairs $(s, F) \in [H \times \kappa \times \kappa]^{<\kappa} \times [\widehat{W}_{H \cup B}]^{<\kappa}$ so that $(s, F)$ satisfies the same requirements as in Definition \ref{poset1} with $A = H$ (as a set) and if $w \in F$ with $w \cong 1$ then $e_W[s, \rho]  = id$ (on its domain). The extension relation is the same as in Definition \ref{poset1}.
\end{definition}

Essentially the same argument as in \cite{MCGrevisited} shows that this forcing notion is ${<}\kappa$-closed and $\kappa^+$-Knaster. Moreover, the same argument as in the previous section shows that if $|H| \leq \kappa$ then this poset is ${<}\kappa$-centered with canonical lower bounds. We will first show the following.
\begin{theorem}
Let $H$, $B$ and $\rho$ be as above. Then the forcing notion $\mathbb Q^\kappa_{H, \rho}$ forces that $H$ embeds into $S(\kappa)$. In fact the image of the generic embedding is isomorphic to ${\rm Im}(\hat{\rho}) * H$, and this group is $\kappa$-cofinitary.
\label{embeddingthm}
\end{theorem}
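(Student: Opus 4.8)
The plan is to mirror the genericity argument from \cite{MCGrevisited} for $\mathbb{Q}^\kappa_{A,\rho}$, adapting each density argument to account for the extra requirement that words $w \cong 1$ must act as the identity. First I would fix a $\mathbb{Q}^\kappa_{H,\rho}$-generic filter $G$ and for each $a \in H$ set $\sigma(a) = \bigcup\{s_a : (s,F) \in G\}$, where as usual $s_a = \{(\alpha,\beta) : (a,\alpha,\beta) \in s\}$. The map $\hat\rho$ extends naturally to a map $\sigma^* : W_{H\cup B} \to {}^\kappa\kappa$ and I want to show: (i) each $\sigma(a)$ is (forced to be) a total permutation of $\kappa$, (ii) for $w \cong 1$, $\sigma^*(w) = \mathrm{id}$, so that $a \mapsto \sigma(a)$ descends to a genuine group homomorphism $H \to S(\kappa)$, (iii) this homomorphism is injective with image making $\langle \mathrm{Im}(\hat\rho), \mathrm{Im}(\sigma)\rangle \cong \mathrm{Im}(\hat\rho) * H$, and (iv) the resulting group is $\kappa$-cofinitary.

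For (i), the relevant dense sets are the ones giving domain and range: for each $a \in H$ and each $\alpha < \kappa$ the set of $(s,F)$ with $\alpha \in \mathrm{dom}(s_a)$ is dense, and symmetrically for the range — this is exactly Lemma~\ref{extensions} (Lemma 2.6 of \cite{MCGrevisited}), which says that after fixing $\alpha \notin \mathrm{dom}(s_a)$ there are co-$\kappa$-many admissible values $\beta$, and in particular the extension $(s \cup \{(a,\alpha,\beta)\}, F) \leq (s,F)$ lies in the forcing. Injectivity of each $s_a$ is built into the definition of the poset, so $\sigma(a)$ is a total injection with full range, hence a permutation. Statement (ii) requires no work beyond unwinding definitions: if $w \cong 1$ then by density we may take $w$ into the side condition $F$ of some condition in $G$, and the extension relation together with the clause ``$w \cong 1 \Rightarrow e_w[s,\rho] = \mathrm{id}$'' forces $\sigma^*(w)$ to be the identity on its domain, which by (i) is all of $\kappa$.

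The heart of the matter is (iii) and (iv), and these are really the same statement: I claim that for every good word $w \in \widehat{W}_{H\cup B}$ with $w \not\cong 1$, $\sigma^*(w)$ has $<\kappa$ fixed points. Since every nontrivial element of $\langle \mathrm{Im}(\hat\rho), \mathrm{Im}(\sigma)\rangle$ that is \emph{not} already in $\mathrm{Im}(\hat\rho)$ is represented by such a good word $w \not\cong 1$ (and nontrivial elements of $\mathrm{Im}(\hat\rho)$ are $\kappa$-cofinitary by hypothesis on $\rho$), this simultaneously gives $\kappa$-cofinitariness, the fact that the embedding $H \hookrightarrow S(\kappa)$ is injective (a word $w$ over $H$ alone with $w \not\cong 1$ has few fixed points, hence is not the identity), and that no further coincidences occur, i.e. the amalgamated structure is the free product $\mathrm{Im}(\hat\rho) * H$. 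To prove the claim, fix $w \not\cong 1$ good and fix $\gamma < \kappa$; I want a dense set of conditions $(s,F)$ with $w \in F$ such that $(s,F)$ forces $\sigma^*(w)$ to have no fixed point $\geq \gamma$. Starting from any $(s,F)$ with $w \in F$, one extends $s$ by finitely many new pairs, chosen successively using Lemma~\ref{extensions} so that each choice avoids the $<\kappa$-many ``bad'' values, to realize an evaluation of $w$ at some ordinal $\geq \gamma$ that is not a fixed point; the point that $w$ is good and $w \not\cong 1$ guarantees the evaluation does not formally collapse, so such a non-fixed-point evaluation genuinely exists. This is the step I expect to be the main obstacle, since one must carefully track which instances of the variable letters $a \in H$ are ``fresh'' along the evaluation and handle, as in the proof of Lemma~\ref{hittinglemma}, the subcases where a letter repeats versus where it occurs once; the bookkeeping is exactly that of \cite[Lemma 2.5]{MCGrevisited} but with the relation ``$\cong 1$'' (cancellation in $H$) replacing free reduction. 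Finally, by genericity, these dense sets are met, so for every good $w \not\cong 1$ the set of fixed points of $\sigma^*(w)$ is bounded below every $\gamma < \kappa$ — wait, more precisely, for each $\gamma$ it is forced that there is no fixed point in $[\gamma, \kappa)$ beyond a bounded set, and assembling these (using $\kappa = \kappa^{<\kappa}$ regular) yields that the fixed point set has size $<\kappa$, completing the proof.
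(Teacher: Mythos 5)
There is a genuine gap, and it sits exactly where the paper has to do new work. Your step (i) treats the density of the domain/range sets $D_{h,\alpha}$, $R_{h,\alpha}$ as an immediate consequence of Lemma~\ref{extensions}. But that lemma is proved for the poset $\mathbb Q^\kappa_{A,\rho}$ of Definition~\ref{poset1}, where the letters generate a free group; it only guarantees that $(s\cup\{(a,\alpha,\beta)\},F)$ is a condition of \emph{that} poset for co-$\kappa$ many $\beta$. In $\mathbb Q^\kappa_{H,\rho}$ a condition must in addition respect the relations of $H$ (words $w\cong 1$ must evaluate to the identity on their domain), and an arbitrarily chosen admissible $\beta$ can destroy this. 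The simplest example: if $a$ has order $2$ in $H$ and some pair $(a,\gamma,\alpha)$ already lies in $s$, then the \emph{only} legal way to put $\alpha$ into ${\rm dom}(s_a)$ is to add $(a,\alpha,\gamma)$; there are not co-$\kappa$ many choices, and a ``generic'' large $\beta$ gives $e_{aa}[s,\rho](\gamma)=\beta\neq\gamma$, violating $aa\cong 1$. The same happens with relations like $ab\cong c$ when $s_b,s_c$ are already defined at the relevant points. This is precisely why the paper introduces the machinery of \emph{applying $A$-relations}: one first closes $(s,F)$ under $\{h\}$-relations (using Lemma~\ref{replacementlemma} and its corollary to see the closure is a condition extending $(s,F)$), after which either $\alpha$ is already handled or one adds a single fresh pair $(h,\alpha,\beta)$ with $\beta$ above everything in ${\rm dom}(t_h)\cup{\rm ran}(t_h)\cup\{\alpha\}$, and a minimal-length-word argument shows no relation $w\cong 1$ can be violated. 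Your proposal gestures at ``the bookkeeping is exactly that of \cite[Lemma 2.5]{MCGrevisited} but with $\cong 1$ replacing free reduction,'' but that replacement is the entire content of the theorem; without the relation-closure step the density argument as you state it is false.

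A second, smaller problem is your mechanism for $\kappa$-cofinitariness in (iii)--(iv). Extending a condition so as to realize \emph{one} evaluation of $w$ above $\gamma$ that is not a fixed point does not force $\sigma^*(w)$ to have ``no fixed point $\geq\gamma$'': later conditions could still create further fixed points at other ordinals, so the dense sets you describe do not yield the conclusion you draw from them (your own closing sentence hedges on exactly this point). The correct mechanism is the one already built into the forcing: the sets $W_w=\{(s,F):w\in F\}$ are dense, and once $w\in F$ the extension relation of Definition~\ref{poset1} freezes the fixed points of $e_w[\cdot,\rho]$, of which there are ${<}\kappa$ at the moment $w$ enters the side condition; no density argument about producing non-fixed points is needed, and none would suffice on its own. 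With those two repairs --- the applying-relations closure for the domain/range density, and freezing rather than a fixed-point-killing density argument --- your outline matches the paper's proof.
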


The key to proving theorem \ref{embeddingthm} is the following idea of {\em applying relations} from $H$ restricted to some small subset. To avoid unnecessary repetitions, for the next few lemmas let us fix a group $H$ and a $\rho:B \to S(\kappa)$. Write $\mathbb Q$ for $\mathbb Q^\kappa_{H, \rho}$.

\begin{definition}
Let $A \subseteq H$ be of size ${<} \kappa$ and $(s, F) \in \mathbb Q$. We say that $(t, F) \in [H \times \kappa \times \kappa]^{<\kappa} \times [\widehat{W}_{H \cup B}]^{<\kappa}$ is {\em obtained from} $(s, F)$ {\em by applying} $A$-{\em relations} if for every $a \in A$ we have $(a, \alpha, \beta) \in t$ if and only if there is a $w \in W_H$ so that $aw \cong 1$ and $e_w[s, \rho](\beta) = \alpha$. 
\end{definition}

Informally $(t, F)$ is obtained from $(s, F)$ by applying $A$-relations if $t$ extends $s$ to be closed under ``everything it has to be" vis-\`{a}-vis words whose letters come from $A$. Since $|A| < \kappa$ it follows that $|t| < \kappa$. Moreover it's immediate that $s \subseteq t$. In fact $(t, F) \in \mathbb Q$ and $(t, F) \leq (s, F)$. To see this we need the following lemma.
\begin{lemma}
Suppose that $A \subseteq H$ is of size ${<} \kappa$, $(s, F) \in \mathbb Q$ and $(t, F)$ is obtained from $(s, F)$ by applying $A$-relations. Let $w \in W_H$ and suppose that $e_w[t, \rho](\alpha) = \beta$. Then there is a word $\bar{w} \in W_H$ so that $w$ and $\bar{w}$ reduce to the same element of $H$ and $e_{\bar{w}}[s, \rho] (\alpha) = \beta$. 
\label{replacementlemma}
\end{lemma}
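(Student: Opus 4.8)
The plan is to prove Lemma \ref{replacementlemma} by induction on the length of the word $w \in W_H$, tracking through the recursive definition of $e_w[t,\rho]$ and, at each place where the ``$t$-layer'' differs from the ``$s$-layer'', substituting a word over $s$ that witnesses the same pair via the definition of applying $A$-relations. The only coordinates where $t$ and $s$ can differ are those $t_a$ with $a \in A$; for letters $a \in A$ and for all letters in $B$, we have $t_a = s_a$ (resp. $\rho(a)$ unchanged). So the induction essentially says: whenever the evaluation of $w$ over $t$ uses a triple $(a,\alpha,\beta) \in t \setminus s$ with $a \in A$, by definition there is $v \in W_H$ with $av \cong 1$ and $e_v[s,\rho](\beta) = \alpha$, i.e. the single letter $a^{\pm 1}$ can be replaced (up to equality in $H$) by the word $v^{\mp 1}$ (or its formal inverse) which is evaluated purely over $s$.

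First I would set up the induction carefully using the recursion in clauses (1)--(3) of the definition of $e_w[s]$ (and its extension $e_w[s,\rho]$): the base case is $w = a$ or $w = a^{-1}$ for a single letter $a \in H \cup B$. If $a \in B$ or $a \in H \setminus A$, then $e_w[t,\rho] = e_w[s,\rho]$ and we take $\bar w = w$. If $a \in A$ and $e_a[t,\rho](\alpha) = \beta$, i.e. $(a,\alpha,\beta) \in t$, then by definition of applying $A$-relations there is $v \in W_H$ with $av \cong 1$ and $e_v[s,\rho](\beta) = \alpha$; since $av \cong 1$ means $a$ equals the inverse of $v$ in $H$, the word $\bar w := v^{-1}$ (formal reversal with inverted letters) reduces to the same element of $H$ as $a$, and a short computation with the recursion shows $e_{\bar w}[s,\rho](\alpha) = \beta$. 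For the inductive step, write $w = a^i u$ without cancellation; apply the inductive hypothesis to $u$ to get $\bar u$ over $s$, then handle the leading letter $a^i$ exactly as in the base case — if $a \in A$ replace it by the corresponding word $v^{\mp 1}$ over $s$ — and concatenate, checking that the resulting word still reduces in $H$ to the same element as $w$ and still evaluates correctly over $s$ on $(\alpha,\beta)$.

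The main obstacle I expect is the bookkeeping around clause (3) of the definition of $e_w[s]$: the recursion is stated in a somewhat tangled way (``there is $\gamma$ so that $(\gamma,\beta) \in e_{a^i}[s]$ and $(\beta,\alpha) \in e_u[s](\alpha,\gamma)$''), so I need to be careful that concatenation of the replacement words behaves compatibly with this composition, and in particular that when I splice in a multi-letter word $v$ in place of a single letter, the resulting longer word over $s$ still has its evaluation $e_{\cdot}[s,\rho]$ match up on the right endpoints. A secondary subtlety is making sure that all the replacement words $v$ guaranteed by the ``applying $A$-relations'' clause indeed involve only $e_w[s,\rho]$ with the \emph{original} $s$ — which is exactly how the definition is phrased — so no circularity arises. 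Once the splicing is organized (e.g. by proving the statement simultaneously for $w$ and its formal inverse, or by always working with the pair $(\alpha,\beta)$ and ``reading the word backwards''), the verification that $w$ and $\bar w$ reduce to the same element of $H$ is routine since we only ever replaced a letter $a$ by a word equal to $a$ in $H$, and equality in $H$ is a congruence.

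Finally I would remark that this lemma is exactly what is needed to conclude $(t,F) \in \mathbb Q$ and $(t,F) \le (s,F)$: if $w \in F$ and $e_w[t,\rho](\alpha) = \alpha$, then the produced $\bar w$ reduces to the same element of $H$ (so $\bar w \cong 1$ iff $w \cong 1$, and $\bar w$ is a good word representing the same group element), and $e_{\bar w}[s,\rho](\alpha) = \alpha$; combined with the fact that $(s,F) \in \mathbb Q$ this forces the fixed-point/identity conditions for $t$ to hold, and the displayed equality of evaluations is precisely the extension requirement from Definition \ref{poset1}.
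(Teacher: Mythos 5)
Your proposal is correct and follows essentially the same route as the paper's proof: wherever the evaluation over $t$ uses a triple $(a,\gamma,\xi)$ with $a \in A$ coming from applying $A$-relations, you replace that occurrence of $a$ by the witnessing word (inverted appropriately), which is equal to $a$ in $H$ and whose evaluation is already defined over $s$. The only difference is organizational — you run a structural induction on the word following clauses (1)--(3), while the paper iterates the single-occurrence replacement until the evaluation over $s$ is defined — and your attention to the composition step in clause (3) and to the non-circularity of the witnesses matches what the paper leaves implicit.
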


\begin{proof}
Suppose that $e_w[s, \rho](\alpha)$ is not defined but $e_w[t, \rho](\alpha) = \beta$. It follows that there is an $a \in A$ so that $w = u a v$ and a pair $(\gamma, \xi)$ so that $(a, \gamma, \xi) \in t$ and hence, by definition, there is a word $w_0$ so that $a w_0 \cong 1$ and $e_{w'}[s, \rho] (\xi) = \gamma$. Thus we can omit the occurrence of $a$ in $w$ by replacing it with $w_0^{-1}$. Continuing to apply this procedure we can reduce $w$ to a new word $\bar{w}$ so that $w \cong \bar{w}$ (as words in $H$) and $e_{\bar{w}}[s, \rho]$ is defined.
\end{proof}

As a corollary of this lemma we get the following.
\begin{lemma}
Suppose that $A \subseteq H$ is of size ${<} \kappa$, $(s, F) \in \mathbb Q$ and $(t, F)$ is obtained from $(s, F)$ by applying $A$-relations. The following hold. 
\begin{enumerate}
\item
$(t, F) \in \mathbb Q$
\item
$(t, F) \leq_\mathbb Q (s, F)$
\item
If $(t', F)$ is obtained from $(t, F)$ by applying $A$-relations then $t' = t$.
\end{enumerate}
\end{lemma}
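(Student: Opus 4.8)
The plan is to derive all three items from Lemma~\ref{replacementlemma}, whose content is that any computation performed over $t$ can be traded for a computation over $s$ along a word representing the same group element; the defining clauses of membership in $\mathbb Q$ and of $\leq_\mathbb Q$ depend, modulo this replacement, only on the underlying group elements, so any violation over $t$ can be pulled back to $s$.

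First I would record the size bound needed for (1). Recall that $t$ agrees with $s$ off the letters of $A$, and that for $a\in A$ a triple $(a,\alpha,\beta)$ lies in $t$ exactly when some $w\in W_H$ satisfies $aw\cong 1$ and $e_w[s,\rho](\beta)=\alpha$; taking $w=a^{-1}$ already gives $s\subseteq t$. For such a $w$ one has $e_w[s,\rho]=e_w[s]$ (it uses no letters of $B$), and inspecting the rightmost and leftmost letters of a reduced presentation of $w$ shows that $\beta$ and $\alpha$ lie in $D:=\bigcup_c(\dom(s_c)\cup\ran(s_c))$, a set of size ${<}\kappa$; since $|A|<\kappa$ and $\kappa$ is regular, $|t|<\kappa$. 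It then remains, for (1), to see that each $t_a$ is still a partial injection and that $e_w[t,\rho]=\mathrm{id}$ on its domain whenever $w\in F$ with $w\cong 1$; and, for (2), that passing from $s$ to $t$ produces no new fixed point of any $w\in F$. In each case I would take a computation over $t$ witnessing a putative failure, apply Lemma~\ref{replacementlemma} to obtain an equivalent word $\bar w$ (so $\bar w\cong w$, hence $\bar w\cong 1$ when $w\cong 1$) together with a computation over $s$ realizing the same input/output pair, and then invoke the defining clauses of $(s,F)\in\mathbb Q$ — injectivity of the relevant $s_c$'s, or the relator clause — for $\bar w$.

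Item (3) I would carry out in full, as it is cleanest. If $(t',F)$ is obtained from $(t,F)$ by applying $A$-relations then $t\subseteq t'$, since any $w$ witnessing $(a,\alpha,\beta)\in t$ over $s$ still witnesses it over $t\supseteq s$; conversely, if $(a,\alpha,\beta)\in t'$ is witnessed by some $w\in W_H$ with $aw\cong 1$ and $e_w[t,\rho](\beta)=\alpha$, then Lemma~\ref{replacementlemma} produces $\bar w$ with $\bar w\cong w$ (so still $a\bar w\cong 1$) and $e_{\bar w}[s,\rho](\beta)=\alpha$, whence $(a,\alpha,\beta)\in t$; thus $t'=t$. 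I expect the main obstacle to be the transfer step underlying (1) and (2): Lemma~\ref{replacementlemma} hands back a word $\bar w$ equivalent to $w$, not $w$ itself, and a computation along $\bar w$ rather than along $w$, so recovering the conclusion for the original $w$ — and, for $\leq_\mathbb Q$, with $e_w[s,\rho](\alpha)$ actually defined — is where one must genuinely use $(s,F)\in\mathbb Q$ and the precise form of the closure under $A$-relations, not merely the equivalence $\bar w\cong w$.
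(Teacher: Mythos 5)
Your proposal is correct and follows essentially the same route as the paper: the paper dismisses (1) and (2) as immediate consequences of Lemma~\ref{replacementlemma} and proves only (3), by exactly the pull-back argument you give (a witness over $t$ for a triple in $t'$ is traded via Lemma~\ref{replacementlemma} for a witness over $s$, so the triple was already in $t$). Your additional remarks — the cardinality bound on $t$ via the domains and ranges of the $s_c$, and the observation that $w=a^{-1}$ gives $s\subseteq t$ — fill in details the paper states without proof in the paragraph preceding the lemma.
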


\begin{proof}
The only one that's not obvious in light of Lemma \ref{replacementlemma} is the third one. Suppose that there is a $(a, \alpha, \beta) \notin t$ so that $a \in A$ there is a $w \in W_H$ so that $aw \cong 1$ and $e_w[t, \rho](\beta) = \alpha$. By applying the procedure described in Lemma \ref{replacementlemma} we can find a $\bar{w}$ so that $a\bar{w} \cong 1$ and $e_{\bar{w}}[s, \rho](\beta) = \alpha$. As a result $(a, \alpha, \beta) \in t$ contradicting our assumption.
\end{proof}

With these lemmas proved, we can now show Theorem \ref{embeddingthm}.

\begin{proof}[Proof of Theorem \ref{embeddingthm}]
The main thrust of what's left to do consists in showing that for each $h \in H$, $w \in W_{H \cup B}$ and $\alpha < \kappa$ the sets $D_{h, \alpha} = \{(s, F) \in \mathbb Q \; | \; \exists \beta \, (h, \alpha, \beta) \in s\}$, $R_{h, \alpha}= \{(s, F) \in \mathbb Q \; | \; \exists \beta \, (h, \beta, \alpha) \in s\}$ and $W_w = \{(s, F) \in \mathbb Q\; | \; w \in F\}$ are dense. That $W_w$ is dense is clear and the proofs for the other two are the same so we only prove that $D_{h, \alpha}$ is dense. 

Fix a condition $(s, F) \in \mathbb Q$ and apply $\{h\}$-relations to it to obtain a $(t, F) \leq (s, F)$. If $(t, F) \in D_{h, \alpha}$ we're done so suppose not. By Lemma \ref{extensions} there are co-boundedly many $\beta$ so that $(t \cup \{h, \alpha, \beta\}, F)$ is a condition in the bigger forcing notion defined by Definition \ref{poset1}. We just need to show that for some such $\beta$ $(t \cup \{h, \alpha, \beta\}, F)$ is a condition in $\mathbb Q$. From this it will follow immediately that $(t \cup \{h, \alpha, \beta\}, F) \leq _\mathbb Q (s, F)$ since the extension relation is the same. Let $\beta > {\rm dom}(t_h) \cup {\rm range}(t_h) \cup \{\alpha\}$. We claim that this $\beta$ works. Indeed suppose not and $w$ be a word of minimal length so that $e_w[t \cup \{h, \alpha, \beta\}, \rho] \neq id$ but $w \cong 1$. Fix a $\gamma$ so that $e_w[t \cup \{h, \alpha, \beta\}, \rho] (\gamma)\neq \gamma$. Since $\beta$ is larger than anything in the domain or range of $w$ it must be the case that the pair $(\alpha, \beta)$ is used in the evaluation of $e_w[t \cup \{h, \alpha, \beta\}, \rho](\gamma)$ either in the beginning of $w$, at the end of $w$ or in the middle, in which case it needs to be used again in reverse immediately. The third case would imply that $w$ is not minimal so this cannot be true. Therefore $w$ can either be written as $w'h^i$ or $h^iw'$ for $i \in \{-1, 1\}$ and $w' \in W_H$. In either case it follows that since $t$ was  obtained by applying $\{h\}$-relations $(h, \alpha, \beta) \in t$ which is a contradiction to our assumption.

To finish the proof now observe that by these density arguments $\mathbb Q$ adds an injective map from $H$ into $S(\kappa)$ whose image is $\kappa$-cofinitary. That it is in fact an embedding follows from the fact that closing under $\{h\}$-relations is dense for each $\{h\}$ and therefore the image of the embedding is isomorphic to ${\rm Im}(\hat{\rho}) * H$.
\end{proof}

Using Theorem \ref{embeddingthm} we can now prove Main Theorem \ref{mainthm3}. We recall it below for convenience.

\begin{theorem}
Assume $\GCH$. For any regular $\delta > \kappa$, there is a ${<}\kappa$-closed, $\kappa^+$-c.c. forcing notion $\mathbb P_\delta$ forcing that there is a $\kappa$-maximal cofinitary group $G$ of size $\delta$ which embeds every group of size ${<}\delta$. 
\label{universaltheorem}
\end{theorem}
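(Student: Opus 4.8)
The plan is to build $\mathbb P_\delta$ as a $\lt\kappa$-support iteration (or product) of length $\delta$ of forcings of the form $\mathbb Q^\kappa_{H,\rho}$, interleaving at successive stages the various groups $H$ of size $\lt\delta$ that must be embedded, together with a bookkeeping of all potential "one-point extensions" $f\colon\kappa\to\kappa$ that could threaten maximality. Concretely, fix an enumeration (using $\GCH$, so that there are only $\delta$ many names in each initial segment by the $\kappa^+$-c.c. and a reflection/$\Delta$-system argument) of all pairs $(\dot f,\text{name for a group }\dot H)$ that can arise, and at each stage $\alpha<\delta$ either (a) force with $\mathbb Q^\kappa_{H,\rho_\alpha}$ for the next scheduled group $H$ of size $\lt\delta$, thereby extending the $\kappa$-cofinitary group built so far to one of the form $G_\alpha\ast H$ by Theorem \ref{embeddingthm}, or (b) handle a name $\dot f_\alpha$ for a permutation: if $\langle G_\alpha,f_\alpha\rangle$ is $\kappa$-cofinitary, use (the relativization of) Lemma \ref{addanewelmt2} / the hitting machinery of Lemma \ref{addahit} to add a generator $h$ with $|h\cap f_\alpha|=\kappa$ (equivalently make $f_\alpha^{-1}\circ h$ or a conjugate have $\kappa$ fixed points), so that $f_\alpha$ is permanently barred from the final group. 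The final group $G=\bigcup_{\alpha<\delta}G_\alpha$ then has size $\delta$, is $\kappa$-cofinitary, embeds every group of size $\lt\delta$ by construction, and is maximal because every permutation in the extension appears as some $f_\alpha$ and was killed at its stage.

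The first thing to verify is that the iteration is well-behaved: each $\mathbb Q^\kappa_{H,\rho}$ is $\lt\kappa$-closed and $\kappa^+$-Knaster (as remarked after the definition of $\mathbb Q^\kappa_{H,\rho}$, by the same proof as in \cite{MCGrevisited}), so a $\lt\kappa$-support iteration of length $\delta$ is $\lt\kappa$-closed and, by the standard $\Delta$-system argument for Knaster-type iterations, $\kappa^+$-c.c.; hence cardinals and cofinalities are preserved and, under $\GCH$ in the ground model, $2^\kappa=\delta$ in the extension (nothing of size $\lt\delta$ is added at a given stage, and the iteration has size $\delta^{<\kappa}=\delta$ using $\GCH$ and $\delta$ regular $>\kappa$). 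Next I would set up the bookkeeping so that at stage $\alpha$ every $\mathbb P_\alpha$-name for an element of $S(\kappa)$ and every $\mathbb P_\alpha$-name for a group of size $\lt\delta$ is eventually addressed; by $\kappa^+$-c.c. there are only $\delta$ many such names, so a length-$\delta$ enumeration with enough repetitions suffices, and one must check that each $\dot f$ is decided by some $\mathbb P_\alpha$ with $\alpha<\delta$ (again from $\kappa^+$-c.c. and $\delta$ regular).

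The genuinely delicate step is step (b): given a $\kappa$-cofinitary group $G_\alpha$ (which is no longer free but has the form $G_\kappa \ast H_0 \ast H_1 \ast\cdots$) and a permutation $f_\alpha$ with $\langle G_\alpha,f_\alpha\rangle$ $\kappa$-cofinitary, one must add a new generator $h$ — via the poset $\mathbb Q^\kappa_{\{a\},G_\alpha}$ — whose generic satisfies $|h\cap f_\alpha|=\kappa$ while keeping $\langle G_\alpha,h\rangle\cong G_\alpha\ast\mathbb Z$ $\kappa$-cofinitary. This is exactly the content of Lemma \ref{addahit}, whose hypothesis is that $f_\alpha$ be \emph{hitable} with respect to $G_\alpha$; and $f_\alpha$ being hitable with respect to $G_\alpha$ is precisely the statement that $f_\alpha\notin G_\alpha$, $|f_\alpha\setminus g|=\kappa$ for all $g\in G_\alpha$, and every word $w(f_\alpha)$ over $G_\alpha$ is either the identity or has $\lt\kappa$ fixed points — i.e. that $\langle G_\alpha,f_\alpha\rangle$ is $\kappa$-cofinitary. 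Here we are applying the hitting lemmas with the forcing axiom replaced by a single generic for a $\kappa^+$-Knaster, $\lt\kappa$-closed poset, which is all that is needed inside an iteration (rather than $\mfp(\kappa)=2^\kappa$); so the main obstacle is bookkeeping-cum-absoluteness: ensuring that "$\langle G_\alpha,f_\alpha\rangle$ is $\kappa$-cofinitary" is correctly evaluated at stage $\alpha$ and that killing hitability of $f_\alpha$ at stage $\alpha$ is not undone by later stages. The latter follows because later stages only freely adjoin further generators (each later $h'$ satisfies $\langle G_\beta, h'\rangle \cong G_\beta \ast \mathbb Z$), so a word witnessing $\kappa$ fixed points of $f_\alpha^{-1}\circ h$ in $G_{\alpha+1}$ still witnesses them in $G$. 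Finally one checks maximality directly: if $f\in S(\kappa)^{V[\mathbb P_\delta]}\setminus G$, then $f=f_\alpha$ for some $\alpha$; at stage $\alpha$ either $\langle G_\alpha,f_\alpha\rangle$ was already not $\kappa$-cofinitary (so a fortiori $\langle G,f\rangle$ is not), or we arranged $|h\cap f_\alpha|=\kappa$, whence $f_\alpha^{-1}\circ h\in\langle G,f\rangle$ has $\kappa$ fixed points — either way $\langle G,f\rangle$ is not $\kappa$-cofinitary, so $G$ is maximal.
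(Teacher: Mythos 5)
Your proposal is correct, and its skeleton coincides with the paper's: a ${<}\kappa$-support, length-$\delta$ iteration of the posets $\mathbb Q^\kappa_{H,\rho}$, with $\GCH$ plus the $\kappa^+$-c.c. used to bookkeep names so that every group of size ${<}\delta$ appearing in the final model is embedded at some stage via Theorem \ref{embeddingthm}, and with ${<}\kappa$-closure, the Knaster property and a $\Delta$-system argument giving preservation of cardinals. Where you genuinely diverge is in how maximality is secured. You additionally bookkeep all ($\mathbb P_\alpha$-)names for permutations of $\kappa$ and, at the stage where $\dot f_\alpha$ is handled, you kill it by hand: if $\langle G_\alpha,f_\alpha\rangle$ is still $\kappa$-cofinitary, $f_\alpha$ is hitable, and a single generic generator added by $\mathbb Q^\kappa_{\{a\},\rho_\alpha}$ meets the dense sets of Lemma \ref{hittinglemma} (in the form used in Lemma \ref{addahit}, with the generic filter replacing the appeal to $\mathfrak p(\kappa)$), so that $|h\cap f_\alpha|=\kappa$ and the witness $f_\alpha^{-1}\circ h$ persists to the final model. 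The paper instead never enumerates permutation names: at cofinally many stages it forces with $\mathbb Q^\kappa_{A,\rho}$ for a set $A$ of size $\kappa^+$, and appeals to the property of this forcing from \cite{MCGrevisited} that the resulting intermediate group is already a $\kappa$-maximal cofinitary group, so any permutation added earlier is either in that group or fails $\kappa$-cofinitariness against it, and this is upward absolute. Your route buys a self-contained argument that reuses the hitting machinery of Section 3 and makes explicit exactly which element kills each threat, at the cost of heavier bookkeeping (names for permutations as well as groups, and the absoluteness check that the stage-$\alpha$ evaluation of ``$\langle G_\alpha,f_\alpha\rangle$ is $\kappa$-cofinitary'' is not disturbed later, which you correctly address); the paper's route buys a shorter maximality argument by black-boxing the maximality of the generic group added by the ``long'' poset with $\kappa^+$ generators. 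Both are sound proofs of Theorem \ref{universaltheorem}.
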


\begin{proof}
Assume $\GCH$ and fix $\delta > \kappa$ regular. Obviously the desired poset will be an iteration whose iterands will be of the form $\mathbb Q^\kappa_{H, \rho}$. Let us fix a bookkeeping function $F:\delta \to [\delta]^{<\delta}$, which is surjective and so that for each $a \in [\delta]^{<\delta}$ the preimage $F^{-1}(\{a\})$ is unbounded in $\delta$. We will define a ${<}\kappa$-supported, $\delta$-length iteration of posets $\mathbb P_\alpha$ of size ${<}\delta$. It follows that we can, via coding, think of each $\mathbb P_\alpha$ as an element of $[\delta]^{<\delta}$. Also, if $\dot{a}$ is a $\mathbb P_{\alpha}$ name for a subset of $\lambda$ for some $\lambda < \delta$ then, using a standard nice names argument, we can also think of $\dot{a}$ as coded by an element of $[\delta]^{<\delta}$. Similarly, using standard coding arguments if $H$ is a group of size ${<}\delta$ then it can be coded into an element of $[\delta]^{<\delta}$.

The forcing is now defined as follows. Let $\mathbb P_0$ be the trivial poset. At limit stages we take ${<}\kappa$- sized supports. Suppose $\mathbb P_\alpha$ has been defined as have names $\dot{\rho}_\alpha$ and $\dot{B}_\alpha$ where $1_\alpha \forces \dot{\rho}_\alpha :\dot{B}_\alpha \to S(\check{\kappa})$ induces a $\kappa$-cofinitary representation. 

\noindent \underline{Case 1}: $F(\alpha)$ codes a $\mathbb P_\beta$ nice name $\dot{H}_\alpha$ for an element of $[\delta]^{<\delta}$ which codes a group for some $\beta < \alpha$. In this case let $\dot{\mathbb Q}_\alpha$ be the $\mathbb P_\alpha$ name for the forcing notion $\mathbb Q^\kappa_{\dot{H}_\alpha, \dot{\rho}_\alpha}$. Let $\mathbb P_{\alpha + 1} = \mathbb P_\alpha * \dot{\mathbb Q}_\alpha$. Finally let $\dot{\rho}_{\alpha+1}$ be the name for the generic mapping added by $\dot{\mathbb Q}_\alpha$ which embeds $\dot{H}_\alpha$ into a $\kappa$-cofinitary group extending the image of $\dot{\rho}_\alpha$ and let $\dot{B}_{\alpha+1}$ name an arbitrary set consisting of the disjoint union of $\dot{B}_\alpha$ and a set of the same cardinality as $\dot{H}_\alpha$.

\noindent \underline{Case 2}: Otherwise. In this case let $\dot{Q}_\alpha$ be the $\mathbb P_\alpha$ name for the poset $\mathbb Q^\kappa_{A, \rho}$ where $A$ is an arbitrary set of set $\kappa^+$ (say $\kappa^+$ itself). Let $\mathbb P_{\alpha+1}, \dot{\rho}_{\alpha+1}$ and $\dot{B}_{\alpha+1}$ be defined as in Case 1. 

We claim that $\mathbb P_\delta$ is the required poset. Clearly it is ${<}\kappa$-closed and $\kappa^{+}$-c.c. and adds a $\kappa$-cofinitary group, call it $G$. To see that $G$ embeds every group of cardinality ${<}\delta$, let $\dot{H}$ be a $\mathbb P_\delta$ name for a group of size $\lambda$ for some $\lambda < \delta$. Without loss we can assume that $\dot{H}$ is a nice name for a subset of $\lambda$. It follows that in fact $\dot{H}$ was added by some $\mathbb P_\beta$ for $\beta< \delta$. Moreover since the preimage of $F^{-1}(\{\dot{H}\})$ is unbounded in $\delta$, there is an $\alpha > \beta$ so that $F(\alpha) = \dot{H}$ and so at stage $\alpha$ we forced that $\dot{H}$ embeds into $G$. To see that $G$ is maximal, suppose $\dot{f}:\kappa \to \kappa$ is a $\mathbb P_\delta$ name for a permutation. By standard arguments $\dot{f}$ was added by some $\mathbb P_\alpha$ for $\alpha < \delta$. At a later stage, say $\beta > \alpha$ we were in case 2. Let $G_{\beta+1}$ be the group added by $\mathbb P_{\beta + 1}$. By the properties of the forcing $\mathbb Q^\kappa_{A, \rho}$ we have that $V^{\mathbb P_{\beta+1}} \models$``$G_{\beta+1}$ is a $\kappa$-maximal cofinitary group" and, in particular, it follows that either $\dot{f}$ is forced to be in $G_{\beta+1}$ or else there is a word $w(x) \in W_{G_{\beta + 1}}(x)$ so that $w(\dot{f})$ is forced to not be $\kappa$-cofinitary. In either case it follows that the same holds in $G$ (working in $V[G]$) from which the maximality of $G$ follows.
\end{proof} 

By interweaving the proofs of the above result and Theorem \ref{orbits2} we can also obtain the following.
\begin{corollary}
Assume $\GCH$ and fix $\delta > \kappa$ regular. Given any partition of $\kappa$ into $\lambda$ many pieces $\{O_\xi\; | \; \xi < \lambda\}$ for some $\lambda < \kappa$ there is a cofinality preserving forcing $\mathbb P$ which forces that $2^\kappa = \delta$ and there is a $\kappa$-maximal cofinitary group $G$ which is universal for groups of size ${<} \delta$ and $\mathsf{ORB}(G) = \{O_\xi \; | \; \xi < \lambda\}$.
\end{corollary}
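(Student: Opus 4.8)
The plan is to carry out a single ${<}\kappa$-supported iteration of length $\delta$ that interweaves the embedding steps from the proof of Theorem~\ref{universaltheorem} with the orbit-preserving construction from the proof of Theorem~\ref{orbits2}. The essential new idea is that the iterands are not the posets $\mathbb Q^\kappa_{H,\rho}$, $\mathbb Q^\kappa_{A,\rho}$ themselves, but \emph{orbit-localized} versions obtained by running those posets separately on each unbounded piece of the partition. Concretely, split $\{O_\xi : \xi<\lambda\}$, as in the proof of Theorem~\ref{orbits2}, into bounded pieces $\{B_\beta : \beta<\lambda_0\}$ and unbounded pieces $\{U_\gamma : \gamma<\mu\}$ (so $\lambda_0,\mu<\kappa$ and $\mu\geq 1$), and fix in $V$ (using $\kappa^{<\kappa}=\kappa$, which follows from $\GCH$) a free $\kappa$-cofinitary group $G_0$ of size $\kappa$ with $\mathsf{ORB}(G_0)=\{B_\beta\}\cup\{U_\gamma\}$ acting transitively on each $U_\gamma$. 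Every group $G_\alpha$ we build will satisfy $G_0\subseteq G_\alpha$ and will \emph{respect the partition}, i.e.\ each of its elements maps every $O_\xi$ to itself; since $G_0$ is transitive on each piece this automatically gives $\mathsf{ORB}(G_\alpha)=\{O_\xi:\xi<\lambda\}$. Because a non-identity element of a $\kappa$-cofinitary group has ${<}\kappa$ fixed points, restriction to a single $U_\gamma$ is a \emph{faithful} homomorphism $G_\alpha\to S(U_\gamma)$, and $G_\alpha\restriction U_\gamma$ is $|U_\gamma|$-cofinitary; these facts are used throughout.

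Given $\mathbb P_\alpha$ and a name $\dot\rho_\alpha\colon\dot B_\alpha\to S(\check\kappa)$ inducing a $\kappa$-cofinitary, partition-respecting representation with image $\dot G_\alpha\supseteq G_0$, let $\dot\rho_\alpha^\gamma$ name the induced map $b\mapsto\dot\rho_\alpha(b)\restriction U_\gamma$ into $S(U_\gamma)$. The iterand $\dot{\mathbb Q}_\alpha$ is a full-support product $\prod_{\gamma<\mu}\dot{\mathbb Q}_\alpha^\gamma$ of posets on the individual pieces $U_\gamma$ (each identified with $\kappa$), followed by a fixed recipe reading off the generic to extend $\dot\rho_\alpha$ to $\dot\rho_{\alpha+1}$ — the new permutations act as the $U_\gamma$-generics on the unbounded pieces and as copies of $G_0$-elements on the $B_\beta$. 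Which factors appear is governed by a bookkeeping $F\colon\delta\to[\delta]^{<\delta}$ surjective with unbounded fibres, exactly as in the proof of Theorem~\ref{universaltheorem}. If $F(\alpha)$ codes a $\mathbb P_\beta$-name ($\beta\le\alpha$) for a group $\dot H$ of size ${<}\delta$, take $\dot{\mathbb Q}_\alpha^\gamma=\mathbb Q^{U_\gamma}_{\dot H,\dot\rho_\alpha^\gamma}$; by Theorem~\ref{embeddingthm} this yields embeddings $\phi_\gamma\colon\dot H\hookrightarrow S(U_\gamma)$ over $\dot G_\alpha\restriction U_\gamma$ with image $\cong(\dot G_\alpha\restriction U_\gamma)*\dot H$, and one sets $\phi(h)=\bigcup_\gamma\phi_\gamma(h)\cup\bigcup_\beta\mathrm{id}_{B_\beta}$. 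Using faithfulness of restriction to each $U_\gamma$, a non-trivial word of $\dot G_\alpha*\dot H$ stays non-trivial in $(\dot G_\alpha\restriction U_\gamma)*\dot H$, hence moves a point; and a non-identity element of $\langle\dot G_\alpha,\mathrm{Im}(\phi)\rangle$ has ${<}\kappa$ fixed points on the bounded set $\bigcup_\beta B_\beta$ and ${<}\kappa$ on each $U_\gamma$, so ${<}\kappa$ in all. Thus $\langle\dot G_\alpha,\mathrm{Im}(\phi)\rangle\cong\dot G_\alpha*\dot H$ is $\kappa$-cofinitary and respects the partition.

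If instead $F(\alpha)$ codes a $\mathbb P_\beta$-name ($\beta\le\alpha$) for a permutation $\dot f$ of $\kappa$, we work in $V^{\mathbb P_\alpha}$ — where $G_\alpha$ and $f:=\dot f^{G_\alpha}$ are both available — and split, exactly as in the three cases of the proof of Theorem~\ref{orbits2}, according to whether $f\in G_\alpha$ or $\langle G_\alpha,f\rangle$ is not $\kappa$-cofinitary; whether some unbounded $U\subseteq U_{\gamma_0}$ has $f\restriction U\colon U\to U$; or whether $|f\cap U_\gamma\times U_\xi|=\kappa$ for some $\gamma\neq\xi$. On each piece not involved we let $\dot{\mathbb Q}_\alpha^\gamma=\mathbb Q^{U_\gamma}_{\{a_\gamma\},\dot\rho_\alpha^\gamma}$ simply add a fresh free generator (Lemma~\ref{addanewelmt}); on the involved piece(s) in the last two cases we additionally meet the dense sets of Lemmas~\ref{hittinglemma} and~\ref{addahit} for $f\restriction U$ (respectively for the relevant conjugated partial injection of case~3), forcing the new generator to hit $f$ so that $\langle G_{\alpha+1},f\rangle$ is no longer $\kappa$-cofinitary; if $F(\alpha)$ codes neither, just add a fresh free generator everywhere. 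Standard bookkeeping then closes the argument: $\mathbb P_\delta$ is ${<}\kappa$-closed and $\kappa^+$-c.c.\ (each $\dot{\mathbb Q}_\alpha^\gamma$ is $\kappa^+$-Knaster by \cite{MCGrevisited}, a product of fewer than $\kappa$ of them is $\kappa^+$-Knaster, and the iteration is $\kappa^+$-c.c.\ since $\kappa^{<\kappa}=\kappa$), so it preserves cardinals and cofinalities, and $\GCH$ plus a nice-names count gives $2^\kappa=\delta$; by the chain condition and regularity of $\delta>\kappa$ every permutation of $\kappa$ and every group of size ${<}\delta$ in the extension has a nice name appearing at some bounded stage, hence is caught cofinally often by $F$; and $G:=\bigcup_{\alpha<\delta}G_\alpha$ is universal for groups of size ${<}\delta$, is $\kappa$-maximal (for $f\notin G$ a word over $G_{\alpha+1}\subseteq G$ witnesses non-cofinitariness of $\langle G_{\alpha+1},f\rangle$, and this persists), and has $\mathsf{ORB}(G)=\{O_\xi:\xi<\lambda\}$ since it contains $G_0$ and respects the partition.

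The two points needing genuine work are the orbit-localized embedding poset — in particular the use of faithfulness of restriction to an unbounded piece to recover the free product $\dot G_\alpha*\dot H$, which has no counterpart in Theorem~\ref{universaltheorem} — and, the harder one, running the maximality argument of Theorem~\ref{orbits2} inside an iteration rather than under a forcing axiom. For the latter one must check that the case distinction is decidable in the intermediate model $V^{\mathbb P_\alpha}$ (it is, since $G_\alpha$ and $f$ live there), that the hitting lemmas (Lemmas~\ref{hittinglemma},~\ref{addahit}) and the orbit-tree combinatorics port verbatim to the localized posets $\mathbb Q^{U_\gamma}_{\{a\},\rho}$ over $U_\gamma\cong\kappa$, and that the hypothesis ``$\langle G_\alpha,f\rangle$ is $\kappa$-cofinitary'' of cases~2--3 does imply that $f\restriction U$ is hitable with respect to $G_\alpha\restriction U_{\gamma_0}$; everything else is a routine merge of the two existing proofs.
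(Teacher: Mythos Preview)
Your proposal is correct and is precisely the kind of argument the paper has in mind: the paper offers no proof at all beyond the single sentence ``By interweaving the proofs of the above result and Theorem~\ref{orbits2} we can also obtain the following,'' so your writeup supplies all the details that the paper omits. The one genuinely new ingredient you identify --- localizing the embedding poset $\mathbb Q^\kappa_{H,\rho}$ to each unbounded orbit and gluing via the faithfulness of restriction to a $\kappa$-sized piece --- is exactly what is needed to make the interweaving work, since the paper's maximality step in Theorem~\ref{universaltheorem} (adding $\kappa^+$ many unconstrained generic generators) would not preserve the orbit structure, and so must be replaced by the hitting technology of Theorem~\ref{orbits2} as you do.
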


Kastermans' original theorem on the $\omega$ case used $\MA$ as opposed to obtaining the universal group by brute force (pun intended). We would like to obtain the same here using $\mfp(\kappa) = 2^\kappa$ in place of $\MA$. However, since we need to assume that $|H| \leq \kappa$ in order to ensure we can apply the forcing axiom characterization we only obtain the weaker result that $G$ can be universal for groups of size at most $\kappa$. Specifically we have the following.

\begin{theorem}
If $\mathfrak{p}(\kappa) = 2^\kappa$ then there is a $\kappa$-maximal cofinitary group which embeds every group of size $\kappa$. 
\end{theorem}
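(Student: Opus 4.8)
The plan is to carry out, under $\mathfrak{p}(\kappa)=2^\kappa$, a transfinite recursion of length $2^\kappa$ in the style of the proof of Theorem \ref{orbits2}, but replacing the orbit-preservation machinery by the embedding poset $\mathbb Q^\kappa_{H,\rho}$ and Theorem \ref{embeddingthm}. Since $\kappa^{<\kappa}=\kappa$ there are at most $2^\kappa$ isomorphism types of groups of size $\kappa$; fix representatives $\{H_\xi\;|\;\xi<2^\kappa\}$ together with an enumeration $S(\kappa)=\{f_\xi\;|\;\xi<2^\kappa\}$ and a bookkeeping surjection guaranteeing that along a recursion of length $2^\kappa$ every $H_\xi$ and every $f_\xi$ is addressed at some stage. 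I would then build a continuous increasing chain of $\kappa$-cofinitary groups $G_\alpha={\rm Im}(\hat\rho_\alpha)$ for $\alpha<2^\kappa$, where each $\rho_\alpha\colon B_\alpha\to S(\kappa)$ induces a $\kappa$-cofinitary representation, maintaining the invariant $|G_\alpha|\le|B_\alpha|<2^\kappa$ (which holds since the recursion adds at most $\kappa$ new index points per stage, so $|B_\alpha|\le\kappa\cdot(1+|\alpha|)<2^\kappa$). Start with $G_0$ trivial; at limits take unions of the $G$'s, $B$'s and $\rho$'s, which preserves $\kappa$-cofinitariness and the size bound.

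Suppose at stage $\alpha$ the bookkeeping points to a group $H=H_\xi$. I would force with $\mathbb Q=\mathbb Q^\kappa_{H,\rho_\alpha}$. The key point is that, although $|B_\alpha|$ may be far larger than $\kappa$, the poset $\mathbb Q$ is still ${<}\kappa$-closed and $\kappa$-centered with canonical lower bounds: centeredness and canonical lower bounds depend only on the first coordinate of a condition, which ranges over $[H\times\kappa\times\kappa]^{<\kappa}$, a set of size $\kappa$ since $|H|\le\kappa$, whereas the second coordinate (words over $H\cup B_\alpha$) plays no role in the centering data, exactly as remarked in the paragraph preceding Theorem \ref{embeddingthm}. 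Moreover below each condition there is a $\kappa$-sized antichain (extend $s$ at a fixed fresh point of ${\rm dom}(s_h)$ by a large enough value, as in the density argument for $D_{h,\gamma}$ inside the proof of Theorem \ref{embeddingthm}). Hence Theorem \ref{forcingaxiom} applies to any family of fewer than $\mathfrak{p}(\kappa)=2^\kappa$ dense sets, and I would meet the dense sets $D_{h,\gamma}$, $R_{h,\gamma}$ (for $h\in H$, $\gamma<\kappa$), $W_w$ (for $w\in W_{H\cup B_\alpha}$), and the ``closure under $\{h\}$-relations'' sets (for $h\in H$) from the proof of Theorem \ref{embeddingthm} --- in total at most $|B_\alpha|+\kappa<2^\kappa$ of them. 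By the argument of Theorem \ref{embeddingthm}, the resulting filter yields an embedding $h\mapsto\rho'(h)$ of $H$ into $S(\kappa)$ whose image together with $G_\alpha$ generates a $\kappa$-cofinitary group isomorphic to $G_\alpha*H$; set $\rho_{\alpha+1}=\rho_\alpha\cup\rho'$ with index set $B_{\alpha+1}=B_\alpha\sqcup H$, and $G_{\alpha+1}={\rm Im}(\hat\rho_{\alpha+1})\cong G_\alpha*H$, a $\kappa$-cofinitary group of size ${<}2^\kappa$ containing $G_\alpha$ and a copy of $H$.

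If instead the bookkeeping points to a permutation $f=f_\xi$, I would proceed as in the maximality part of Theorem \ref{orbits2}: if $f\notin G_\alpha$ and $\langle G_\alpha,f\rangle$ is $\kappa$-cofinitary then $f$ is hitable with respect to $G_\alpha$, so by Lemma \ref{addahit} (applicable since $|G_\alpha|<\mathfrak{p}(\kappa)$) there is $g\in S(\kappa)\setminus G_\alpha$ with $\langle G_\alpha,g\rangle$ $\kappa$-cofinitary and $|g\cap f|=\kappa$, and one may additionally require $g\neq f$ by one further trivial density requirement in that construction; put $G_{\alpha+1}=\langle G_\alpha,g\rangle$, so that $f^{-1}\circ g\in\langle G_{\alpha+1},f\rangle$ is a non-identity permutation with $\kappa$ many fixed points and $\langle G_{\alpha+1},f\rangle$ is not $\kappa$-cofinitary. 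In all remaining cases put $G_{\alpha+1}=G_\alpha$. Finally let $G=\bigcup_{\alpha<2^\kappa}G_\alpha$. This is $\kappa$-cofinitary; it embeds every group of size $\kappa$ because each $H_\xi$ embeds into the $G_{\alpha+1}$ produced at the stage addressing $H_\xi$; and it is maximal, since any $f\in S(\kappa)\setminus G$ equals some $f_\xi$ and is not in $G_\alpha$ at the stage $\alpha$ addressing it, so either $\langle G_\alpha,f\rangle$ was already not $\kappa$-cofinitary or it was destroyed at stage $\alpha+1$, and in both cases $\langle G,f\rangle$ is not $\kappa$-cofinitary.

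The main obstacle is the one already flagged: one must carefully verify that the hypotheses of Theorem \ref{forcingaxiom} for $\mathbb Q^\kappa_{H,\rho_\alpha}$ genuinely survive when $|B_\alpha|>\kappa$ --- they do, precisely because only the $\kappa$-sized first-coordinate set feeds the centering data --- and that the number of dense sets met at each stage stays below $\mathfrak p(\kappa)=2^\kappa$, which is exactly what forces the invariant $|G_\alpha|<2^\kappa$ and is also the reason this argument cannot reach groups of size ${>}\kappa$: for $|H|>\kappa$ the first-coordinate set would exceed $\kappa$ and $\mathbb Q^\kappa_{H,\rho}$ would no longer be $\kappa$-centered, so the stronger conclusion there genuinely requires the iteration of Theorem \ref{universaltheorem} rather than a forcing axiom.
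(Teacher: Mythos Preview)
Your proposal is correct and follows essentially the same approach as the paper: a length-$2^\kappa$ recursion that at each stage embeds a $\kappa$-sized group via $\mathbb Q^\kappa_{H,\rho}$ together with Theorem \ref{forcingaxiom}, and separately kills a potential new permutation via Lemma \ref{addahit}. The only organizational difference is that the paper handles one group and one permutation at every successor stage rather than alternating via bookkeeping, and your write-up is somewhat more explicit than the paper about why $\mathbb Q^\kappa_{H,\rho_\alpha}$ remains $\kappa$-centered with canonical lower bounds when $|B_\alpha|>\kappa$ and about the cardinality of the family of dense sets being met.
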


\begin{proof}
Enumerate $S(\kappa) = \{f_\alpha \;  | \; \alpha < 2^\kappa\}$ and the collection of all groups with domain $\kappa$ as $\{H_\alpha \; | \; \alpha < 2^\kappa\}$. Our group will be constructed transfinitely. We will inductively define an increasing, continuous chain of groups $G_\alpha$ so that for each $\alpha$ $H_\alpha$ embeds into $G_\alpha$ and either $g_\alpha \in G_\alpha$ or $\langle G_\alpha, g_\alpha\rangle$ is not $\kappa$-cofinitary. Our group will be $G : = \bigcup_{\alpha < 2^\kappa} G_\alpha$ as before. 

At stage $0$ first use $\mathbb Q^\kappa_{H_0}$ to build a $\phi_0$ so that $\phi_0:H_0 \to S(\kappa)$ embeds a $\kappa$-cofinitary copy of $H_0$ in $S(\kappa)$. Next, if $g_0$ is hitable with respect to $\phi_0(H_0)$ then use Lemma \ref{addahit} to find a $g_0$ so that $\langle \phi_0(H_0), g_0 \rangle$ is $\kappa$-cofinitary and $f_0$ cannot be added to any subgroup of $S(\kappa)$ containing $g_0$ without killing $\kappa$-cofinitariness. If $f_0$ is not hitable then let $g_0$ be the identity and in either case let $G_0 := \langle \phi_0(H_0), g_0\rangle$.

Now suppose we have constructed a $\kappa$-cofinitary group $G_\alpha$ so that for all $\beta \leq \alpha$ there is an embedding $\phi_\beta: H_\beta \to G_\alpha$, and for every $\beta \leq \alpha$ either $g_\beta \in G_\beta$ or $\langle G_\beta, g_\beta\rangle$ is not $\kappa$-cofinitary. Let $\rho_\alpha:B \to S(\kappa)$ be a mapping which induces a $\kappa$-cofinitary representation equal to $G_\alpha$. Now use $\mathbb Q^\kappa_{H_{\alpha+1}, \rho_\alpha}$ to find a $\phi_{\alpha+1}$ so that $\phi_{\alpha+1}$ embeds $H_{\alpha+1}$ into a $\kappa$-cofinitary extension of $G_\alpha$. Finally if $f_{\alpha+1}$ is hitting with respect to $\langle {\rm Im}(\phi_{\alpha+1}), G_\alpha\rangle$ then use Lemma \ref{addahit} as in stage $0$ to find a $g_{\alpha+1}$. Finally let $G_{\alpha+1} = \langle G_\alpha, {\rm Im}(\phi_{\alpha+1}), g_{\alpha+1}\rangle$. 

This completes the construction. Let $G = \bigcup_{\alpha<2^\kappa} G_\alpha$. Clearly this embeds every group of size $\kappa$ and, by the same argument as was used in Theorem \ref{orbits2} it will be a $\kappa$-maximal cofinitary group.
\end{proof}

Again this theorem can be proved with the added assumption that the universal $G$ has any particular set of ${<}\kappa$ many orbits we like.

\section{Conclusion and Open Questions}

We finish by recording some open questions on the structure of $\kappa$-maximal cofinitary groups. The first concerns getting a converse of Main Theorem \ref{mainthm2}.
\begin{question}
Is it consistent that there is a partition of $\kappa$ of size ${<}\kappa$ which is not the set of orbits of a $\kappa$-maximal cofinitary group?
\end{question}
This seems to be unknown even in the $\omega$ case and would be extremely interesting to investigate further.

The next has to do with the analogue of another theorem of Kastermans. Kastermans also showed in \cite[Theorem 8]{Isomofmcgs} that it's consistent that there is a locally finite maximal cofinitary group. The obvious analogue of this result for $\kappa$, that there is a locally ${<}\kappa$ $\kappa$-maximal cofinitary group, is trivially true since any group of size $>\kappa$ is locally ${<}\kappa$ for any uncountable $\kappa$. Therefore the following question is more appropriate and seems to represent the first place that there many be a divergence in the theories of maximal cofinitary groups on $\omega$ and on $\kappa$.

\begin{question}
Is it consistent that there is a locally finite $\kappa$-maximal cofinitary group?
\end{question}

Finally we repeat the observation noted after the proof of Theorem \ref{orbits1} in a question form.

\begin{question}
Fix $\mu < \kappa$ and define a group to be $(\kappa, \mu)$-cofinitary if it is a subgroup of $S(\kappa)$ all of whose non-identity elements have less than $\mu$ many fixed points. How do the maximal $(\kappa, \mu)$-cofinitary groups differ? In particular, can the associated cardinal characteristics be different? Can there be $\mu_0 < \mu_1 < \kappa$ and $G_0$ a $(\kappa, \mu_0)$-maximal cofinitary group, $G_1$ a $(\kappa, \mu_1)$-maximal cofinitary group so that $G_0 \cong G_1$?
\end{question}


\end{document}